\newtheorem{Theorem}{\sc Theorem}
\newtheorem{lemma}[Theorem]{\sc Lemma}
\newtheorem{remark}[Theorem]{\sc Remark}
\begin{document}
	
	\title[Finite element methods for a Keller-Segel system]
	{ERROR ESTIMATES AND BLOW-UP ANALYSIS OF\\
		A FINITE-ELEMENT APPROXIMATION FOR THE\\
		parabolic-elliptic   Keller-Segel system}
	
	\author[w. Chen]{Wenbin Chen}
	\address{
		Shanghai Key Laboratory for Contemporary Applied Mathematics, 
		School of Mathematical Sciences,
		Fudan University,
		Shanghai, 200433, China
	}
	\email{wbchen@fudan.edu.cn}
	
	\author[ q. Liu]{Qianqian Liu}
	\address{
		School of Mathematical Sciences,
		Fudan University,
		Shanghai, 200433, China
	}
	\email{qianqianliu21@m.fudan.edu.cn}
	
	\author[J. Shen]{Jie Shen}
	\address{
		Department of Mathematics,
		Purdue University,
		West Lafayette, IN 47907, USA
	}
	\email{shen7@purdue.edu}

	
	
	\date{}
	
	
	\subjclass[1991]{65M12, 35K61, 35K55, 92C17}
	
	\abstract{The Keller-Segel equations are widely used for describing chemotaxis in biology. Recently,
		a new fully discrete scheme for this model was proposed in \cite{2020Unconditionally},
		mass conservation, positivity and energy decay were proved for the proposed scheme,
		which are important properties of the original system. In this paper,
		we establish the error estimates of this scheme. Then, based on the error estimates,
		we derive the finite-time blowup of nonradial numerical solutions under some conditions on the mass and the moment of the initial data.}
	\keywords{parabolic-elliptic systems, finite element method, error estimates, finite-time blowup.}
	\maketitle
	\section{Introduction.}\label{Intro}
	Keller and Segel first proposed a nonlinear model in the 1970s to describe the effect of cell aggregation in \cite{1970Keller,1971Keller}.
	A simplified Keller-Segel model in 2-D is given by
	\begin{align}
	\frac{\partial u}{\partial t} =&\Delta u - \chi\nabla\cdot(u\nabla v),\quad x\in\Omega,\ t>0, \label{1.1}\\
	0=&\Delta v-v+\alpha u,\quad x\in\Omega,\ t>0,\label{1}
	\end{align}
	where $\Omega\subset \mathbb{R}^2$ is a bounded domain with smooth boundary $\partial\Omega$. The unknown $u=u(x,t)$ and $v=v(x,t)$
	represent the concentration of the organism and chemoattractant respectively. The parameters $\chi,\alpha$ are positive constants with $\chi$ being the sensitivity of chemotaxis. The model is supplemented with  initial conditions
	\begin{equation*}
	u(x,t=0)=u_0(x),\ v(x,t=0)=v_0(x),\ x\in\Omega,
	\end{equation*}
	and no flux boundary conditions
	\begin{equation*}
	\frac{\partial u}{\partial\bm{n}}-\chi u\frac{\partial v}{\partial\bm{n}}=0,\ \frac{\partial v}{\partial\bm{n}}=0,\quad x\in\partial\Omega,\ t>0,\label{1.3}
	\end{equation*}
	where $\bm{n}$ denotes the unit outward normal vector to the boundary $\partial\Omega$, $\partial /\partial\bm{n}$
	represents differentiation along $\bm{n}$ on $\partial\Omega$.
	
	A different version of the Keller-Segel model consists in replacing \eqref{1} by
	\begin{equation}
	\frac{\partial v}{\partial t}=\Delta v-v+\alpha u,\quad x\in\Omega,\ t>0.\label{1.2}
	\end{equation}

	The equation \eqref{1.1} describes the motion of the organism $u$. The term $F=-\nabla u+\chi u\nabla v$ is the flux, and
	the effect of diffusion $-\Delta u$ and that of chemotaxis $\chi\nabla\cdot(u\nabla v)$ are competing for $u$ to vary.
	The equation \eqref{1} describes the change in concentration of the chemoattractant $v$,
	it is influenced by the diffusion and the decay of the chemoattractant as well as the growth of the organism.
	In general, the chemoattractant particles are much smaller than the organism particles, thus it diffuses faster,
	which means that the diffusion of the chemoattractant will reach the equilibrium state in a relatively short time.
	The model \eqref{1.1}-\eqref{1} is called parabolic–elliptic system.
	On the other hand, \eqref{1.1} with \eqref{1.2} is a parabolic–parabolic system.

	The solution of the Keller-Segel model \eqref{1.1}-\eqref{1} has several well-known properties,  particularly,
	it may blow up in finite time. Various aspects
	and results for the classical Keller-Segel model since 1970, along with some
	open questions,  are summarized in \cite{2004Horstmann}. Positivity, mass conservation and energy dissipation of Keller-Segel equations can be found in \cite{2000Behavior},\cite{2001Concentration},\cite{2005Free},\cite{Hideo2008Local} and \cite{2019Calvez}, which plays an important role to study the Keller-Segel system. 
	Blanchet, Dolbeault and Perthame presented   in \cite{2006Blanchet}  a detail proof of the existence of weak solutions when the initial mass is below the critical mass,
	above which any solution to the parabolic-elliptic systems blows up in finite time in the whole Euclidean space.
	In \cite{2001Blowup}, Nagai demonstrated the finite-time blowup of nonradial solutions  under some assumptions on the mass and the moment of the initial data.
	As for the parabolic-parabolic systems, Blanchet proved  in \cite{2012Blanchet} the optimal critical mass of the solutions in $\mathbb{R}^d$ with $d\ge3$. Wei proved that for every nonnegative initial data in $L^1(R^2)$, the 2-D Keller-Segel equation is globally well-posed if and only if the total mass $M\le 8\pi$ in \cite{wei2018}.

	Although the large time behavior of the solution of the Keller-Segel model \eqref{1.1}-\eqref{1} has been well studied,
	there is still much to explore on the numerical side. Since the Keller-Segel equations possess three important properties:
	positivity, mass conservation and energy dissipation, it is preferable that  numerical schemes can  preserve these properties at the discrete level.
	In \cite{2019BLOW}, the existence of weak solutions and upper bounds for the blow-up time for time-discrete
	(including the implicit Euler, BDF and Runge-Kutta methods) approximations of the parabolic-elliptic Keller-Segel models in the two-dimensional
	whole space are established. Liu, Li and Zhou proposed a numerical method in \cite{2016liu} which preserves both positivity
	and asymptotic limit, the proposed numerical method does not generate negative density
	if initialized properly under a less strict stability condition. Saito and Suzuki presented a finite difference scheme  in \cite{2005SaitoNotes} which satisfies the conservation of a discrete $L^1$ norm.
	
	Some finite element methods are proposed in previous works. Saito presented a finite element scheme for parabolic-elliptic systems in \cite{2007NorikazuConservative} that satisfies  both positivity and mass conservation properties. Under some assumptions on the regularity of solutions, the error estimates were established. Saito further constructed the finite element methods to the parabolic-parabolic systems in \cite{2011SaitoERRORAO} and derived error analysis by using analytical semigroup theory. Gurusamy and Balachandran proposed a finite element method for parabolic-parabolic systems and established the existence of approximate solutions by using Schauder's fixed point theorem in \cite{2018gurusamy}. Further the error estimates for the approximate solutions in $H^1$-norm were derived.

	The discontinuous Galerkin methods can be also used to solve the Keller-Segel equations. Epshteyn and Kurganov developed a family of new interior penalty discontinuous Galerkin methods and proved error estimates for the proposed high-order discontinuous Galerkin methods in \cite{2008epshteyn}. Epshteyn and Izmirlioglu further constructed a discontinuous Galerkin method for Keller-Segel model in \cite{2009epshteyn} and obtained fully discrete error estimates for the proposed scheme. 
	In 2017, Li, Shu and Yang applied the local discontinuous Galerkin (LDG) method to 2D Keller–Segel  chemotaxis model in \cite{2017li}, they improved the results upon \cite{2008epshteyn} and gave optimal rate of convergence under
	special finite element spaces before the blow-up occurs. In 2019, Guo, Li and Yang constructed a consistent numerical energy and prove the energy dissipation with the LDG discretization in \cite{li2019}. 
	
	Another important numerical methods for Keller-Segel models are finite volume methods since the positivity property can be naturally preserved.
	Filbet proposed  in \cite{2006Filbet} a finite volume scheme for the parabolic-elliptic system, and
	by assuming the CFL condition $\chi\Delta t\mathcal{D}_{\mathcal{T},1}<1$ and the initial datum $n^0\ge a^0>0$,
	he proved existence and uniqueness of the numerical solution by using the Browder fixed point theorem,
	and showed that the numerical approximation converges to the exact solution under some assumptions. 
	In 2016, Zhou and Saito
	proposed a finite volume scheme  in \cite{2016SaitoFinite}, and established error estimates in $L^p$ norm with a suitable $p>2$ for the two dimensional case
	under some regularity assumptions of solutions and admissible mesh.
	By focusing on the radially symmetirc solution,
	they derived some {\it a prior} estimates to study the blow-up phenomenon of numerical solution. 
	
    There have been growing interests in positivity-preserving analysis for gradient flows with logarithmic energy potential. 
	Some theoretical analysis of the positivity-preserving property and the energy stability have been explored for these numerical schemes for certain systems, such as Cahn-Hilliard systems in \cite{2019chen,2019dong,2020dong,2021yuan,2021dong}, the Poisson-Nernst-Planck-Cahn-Hilliard systems in \cite{2021qian}, the Poisson-Nernst-Planck systems in \cite{2021wangcheng}, the thin film model without slope selection in \cite{2018liwj} and a structure-preserving, operator splitting scheme for reaction-diffusion systems in \cite{2021liu}. The techniques of the higher order consistency analysis combined with rough error estimate and refined one have been presented in \cite{2021wangcheng,2020duan,2021duan} which will be utilized in the following to obtain the convergence analysis. 
	
	Recently, a new approach for constructing positivity preserving schemes was proposed in \cite{2020Unconditionally}.
	The key for this approach is to write  $\Delta u$ as $\nabla\cdot(u\nabla\log u)$ in \eqref{1.1}, and then use a convex splitting idea to construct
	mass conservative, bound preserving, and uniquely solvable schemes for \eqref{1.1}-\eqref{1} and  for  \eqref{1.1}-\eqref{1.2}.
	The main purposes of this  paper are to
	establish the convergence of the fully discrete scheme proposed in \cite{2020Unconditionally},
	and to show  the finite-time blowup of numerical solutions under some conditions on the mass and moments of the initial data.
	More precisely,  let $u_h^k$ be an approximation of $u(\cdot,k\tau)$,
	where $\tau>0$ is the time step and $k\in\mathbb{N}$.
	Let $\theta=(u_0,1)$ be the initial mass, and $M^k=(u_h^k,\varphi)_h$ be the moment of $u_h^k$.
	Our first goal is to establish the error estimates for the fully discrete scheme proposed in \cite{2020Unconditionally} (cf. Theorem \ref{th.err}).
	Another important feature of the Keller-Segel system \eqref{1.1}-\eqref{1} is  that the solution may blow up in finite time under certain conditions on the initial data.
	Our second goal is to show that  the numerical solution will also  blow up in finite time  under similar conditions on the initial data (cf. Theorem \ref{th:blow}). 
	Many previous works (see \cite{2005SaitoNotes,2007NorikazuConservative,2020Unconditionally}) show that the numerical solution seems to blow up under large initial data by several numerical experiments. However, there is still much to explore on the theoretical proof of blowup phenomenon besides the radial numerical solution in \cite{2016SaitoFinite} mentioned before.

	The rest of the paper is organized as follows. In Section \ref{sec2}, we recall some properties of the classical Keller-Segel equations, including
	its finite-time blowup behavior.  In Section \ref{sec3}, we introduce the fully discrete scheme constructed in  \cite{2020Unconditionally} and carry out a rigorous  error analysis.
	In Section \ref{sec4},  we show that the numerical solution will blow up in finite time under suitable conditions on the initial data.

	\section{The Keller-Segel equations}\label{sec2}
	In this section, we recall some properties for the Keller-Segel system \eqref{1.1}-\eqref{1} with no flux
	boundary conditions. In addition, we
	assume the initial value $u_0\in  W^{2,p}(\Omega),\ 1<p<\infty$, and satisfies
	\begin{equation*}
	u_0\ge0\quad \text{and}\quad u_0\not\equiv0,\quad \forall x\in\Omega.
	\end{equation*}
	It was shown in \cite{2000Behavior} that there exist some $T>0$ such that \eqref{1.1}-\eqref{1} is well posed in the time interval $[0,T]$.
	Moreover, it holds that
	\begin{Theorem}
		The Keller-Segel system \eqref{1.1}-\eqref{1} satisfies the following properties:
		\begin{enumerate}[(i)]
			\item Positivity preserving:
			\begin{equation*}
			u(x,t)>0,\quad v(x,t)>0 \quad\text{on} \quad\Omega\times(0,T]. \\
			\end{equation*}
			In fact, it is a consequence of the strong maximum principle \cite{1995Diaz}\ .
			\item Mass conservation:
			\begin{equation*}
			\int_{\Omega}u(x,t)\text{d}x=\int_{\Omega}u_0(x)\text{d}x, \quad\text{for all}\quad t>0. \\
			\end{equation*}
			It is immediately follows from
			\begin{equation*}
			\dfrac{\text{d}}{\text{d}t}\int_{\Omega}u(x,t)\text{d}x=\int_{\partial \Omega}
			\left(\dfrac{\partial u}{\partial \bm{n}}-u\dfrac{\partial v}{\partial \bm{n}}\right)\text{d}S=0\ .
			\end{equation*}
			As a consequence of (i) and (ii), we obtain the conservation of $L^1$ norm, namely
			$$\|u(t)\|_{L^1(\Omega)}=\|u_0\|_{L^1(\Omega)}.$$
			\item  Energy decay:
			\begin{equation*}
			\dfrac{\text{d}F[u(t),v(t)]}{\text{d}t}=-\int_{\Omega}u|\nabla\cdot(\log u-\chi v)|^2\text{d}x\le0,
			\end{equation*}
			where the free energy of \eqref{1.1}-\eqref{1} is defined by $$F[u,v]=
			\int_{\Omega}\left(u(\log u-1)-\chi uv+\dfrac{\chi}{2\alpha}|\nabla v|^2+\dfrac{\chi}{2\alpha}v^2\right)\text{d}x\ .$$
		\end{enumerate}
	\end{Theorem}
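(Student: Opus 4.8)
The plan is to verify the three properties in sequence, treating the positivity and mass conservation as short arguments and reserving the main effort for the energy identity in part (iii).

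For part (i), I would appeal directly to the strong maximum principle. Since $u_0\ge 0$, $u_0\not\equiv 0$, and \eqref{1.1} is a parabolic equation in divergence form $u_t=\nabla\cdot(\nabla u-\chi u\nabla v)$, the strong maximum principle (see \cite{1995Diaz}) yields $u(x,t)>0$ for all $t\in(0,T]$. With this in hand, the elliptic equation \eqref{1} becomes $-\Delta v+v=\alpha u>0$, so the elliptic maximum principle gives $v>0$ as well. For part (ii), I would integrate \eqref{1.1} over $\Omega$, rewrite the right-hand side as $\nabla\cdot(\nabla u-\chi u\nabla v)$, and apply the divergence theorem; the boundary integral vanishes by the no-flux condition $\partial u/\partial\bm{n}-\chi u\,\partial v/\partial\bm{n}=0$, so $\tfrac{d}{dt}\int_\Omega u\,\mathrm dx=0$, and combining with (i) gives the $L^1$ conservation.

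The substance is part (iii). First I would differentiate $F$ under the integral sign. Using $\tfrac{d}{du}[u(\log u-1)]=\log u$, this produces
\[
\frac{dF}{dt}=\int_\Omega\Big(u_t\log u-\chi u_t v-\chi u\,v_t+\frac{\chi}{\alpha}\nabla v\cdot\nabla v_t+\frac{\chi}{\alpha}v\,v_t\Big)\,\mathrm dx .
\]
Next I would integrate by parts in the term $\tfrac{\chi}{\alpha}\nabla v\cdot\nabla v_t$; the boundary contribution drops out because $\partial v/\partial\bm{n}=0$, leaving $-\tfrac{\chi}{\alpha}\int_\Omega v_t\,\Delta v\,\mathrm dx$. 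Collecting the three $v_t$-terms then gives $\tfrac{\chi}{\alpha}\int_\Omega v_t\,(-\alpha u-\Delta v+v)\,\mathrm dx$, and here the elliptic constraint \eqref{1} enforces $\Delta v-v+\alpha u=0$, so the entire $v_t$ block cancels. This cancellation is exactly the feature special to the parabolic-elliptic coupling, and presenting it cleanly is the step I expect to be the most delicate.

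What survives is $\tfrac{dF}{dt}=\int_\Omega u_t(\log u-\chi v)\,\mathrm dx$. The closing step is to recast the $u$-equation in the flux form $u_t=\nabla\cdot\!\big(u\nabla(\log u-\chi v)\big)$ — using $\nabla u=u\nabla\log u$, valid since $u>0$ — substitute it, and integrate by parts once more. The resulting boundary flux is precisely $\partial u/\partial\bm{n}-\chi u\,\partial v/\partial\bm{n}$, which vanishes by the no-flux condition, so
\[
\frac{dF}{dt}=-\int_\Omega u\,\big|\nabla(\log u-\chi v)\big|^2\,\mathrm dx\le 0 ,
\]
the sign following from $u>0$ in part (i). The only genuine obstacle is the rigorous justification of differentiation under the integral and of the integrations by parts, which I would underpin with the regularity afforded on $[0,T]$ by the well-posedness result of \cite{2000Behavior}.
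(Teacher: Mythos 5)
Your proposal is correct and follows essentially the same route as the paper: parts (i) and (ii) are justified exactly as in the paper's inline remarks (strong maximum principle for $u$, then the elliptic equation for $v$; divergence theorem plus the no-flux condition for mass conservation), and your computation for (iii) — cancellation of the $v_t$ block via the elliptic constraint, followed by the flux-form substitution $u_t=\nabla\cdot\bigl(u\nabla(\log u-\chi v)\bigr)$ and one more integration by parts — is the standard free-energy dissipation argument that the paper recalls from the literature without writing out. Note that your final identity $\frac{\mathrm{d}F}{\mathrm{d}t}=-\int_\Omega u\,|\nabla(\log u-\chi v)|^2\,\mathrm{d}x$ is the correct form; the paper's statement $|\nabla\cdot(\log u-\chi v)|^2$ is a typographical slip, since $\log u-\chi v$ is a scalar and the dissipation involves its gradient, not a divergence.
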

	The following result is shown in \cite{2001Blowup}.
	\begin{lemma}\cite{2001Blowup}\label{lem:blowup}
		Let $q\in\Omega$ and $0<r_1<r_2<dist(q,\partial\Omega)$, where $dist(q,\partial\Omega)$ is the distance between $q$ and $\partial\Omega$.
		Then there exist positive constants $C_1,C_2$ depending only on $r_1,r_2$ and $dist(q,\partial\Omega)$ such that for $t\in(0,T]$,
		\begin{equation}
		\begin{aligned}
		&\dfrac{\text{d}}{\text{d}t}\int_{\Omega}u(x,t)\Phi(x)\text{d}x\\
		\le\ &4\int_{\Omega}u_0(x)\text{d}x-\dfrac{\alpha\chi}{2\pi}\left(\int_{\Omega}u_0(x)\text{d}x\right)^2
		+C_1\left(\int_{\Omega}u_0(x)\text{d}x\right)\left(\int_{\Omega}u(x,t)\Phi(x)\text{d}x\right)\\
		&+C_2\left(\int_{\Omega}u_0(x)\text{d}x\right)^{3/2}\left(\int_{\Omega}u(x,t)\Phi(x)\text{d}x\right)^{1/2},\\
		\end{aligned}
		\end{equation}
		where $\Phi(x)=\phi(|x-q|)$ with
		\begin{eqnarray*}
			\phi(r)=
			\begin{cases}
				r^2 &if\quad 0\le r\le r_1,\\
				a_1r^2+a_2r+a_3 &if\quad r_1<r\le r_2,\\
				r_1r_2 &if\quad r>r_2,\\
			\end{cases}
		\end{eqnarray*}
		where $a_1=-\dfrac{r_1}{r_2-r_1},\quad a_2=\dfrac{2r_1r_2}{r_2-r_1},\quad a_3=-\dfrac{r_1^2r_2}{r_2-r_1}$.
	\end{lemma}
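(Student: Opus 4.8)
The plan is to follow the weighted-moment method of \cite{2001Blowup}, adapted to the bounded domain with Neumann data. Write $m=\int_\Omega u_0\,\text{d}x$ for the (conserved) mass and $I(t)=\int_\Omega u(x,t)\Phi(x)\,\text{d}x$ for the weighted moment. First I would note that $a_1,a_2,a_3$ are chosen precisely so that $\phi$ and $\phi'$ match at $r=r_1$ and $r=r_2$; hence $\Phi\in C^{1,1}(\overline\Omega)$, and $\Phi$ is constant (so $\nabla\Phi=0$) on $\{|x-q|\ge r_2\}$, a region which contains a neighbourhood of $\partial\Omega$ since $r_2<\mathrm{dist}(q,\partial\Omega)$. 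Differentiating $I(t)$, substituting \eqref{1.1}, and integrating by parts twice, the boundary integral $\int_{\partial\Omega}\Phi(\partial u/\partial\bm n-\chi u\,\partial v/\partial\bm n)\,\text{d}S$ vanishes by the no-flux condition, while $\int_{\partial\Omega}u\,\partial\Phi/\partial\bm n\,\text{d}S$ vanishes because $\nabla\Phi=0$ near $\partial\Omega$, leaving
\begin{equation*}
\frac{\text{d}I}{\text{d}t}=\int_\Omega u\,\Delta\Phi\,\text{d}x+\chi\int_\Omega u\,\nabla\Phi\cdot\nabla v\,\text{d}x=:J_1+J_2.
\end{equation*}

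For $J_1$ I would compute the radial Laplacian $\Delta\Phi=\phi''+r^{-1}\phi'$: it equals $4$ on $\{|x-q|<r_1\}$, vanishes on $\{|x-q|>r_2\}$, and on the annulus equals $4a_1+a_2/r$, which is decreasing in $r$ with maximum $2(r_2-2r_1)/(r_2-r_1)\le 4$. Thus $\Delta\Phi\le 4$ a.e., so positivity $u\ge 0$ together with mass conservation gives $J_1\le 4\int_\Omega u\,\text{d}x=4m$, the first term on the right-hand side.

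The core is $J_2$. Representing the solution of \eqref{1} as $v(x)=\alpha\int_\Omega N(x,y)u(y)\,\text{d}y$, with $N$ the Neumann Green's function of $-\Delta+1$, and symmetrising in $x\leftrightarrow y$ gives
\begin{equation*}
J_2=\frac{\chi\alpha}{2}\int_\Omega\!\!\int_\Omega u(x)u(y)\big[\nabla\Phi(x)\cdot\nabla_xN(x,y)+\nabla\Phi(y)\cdot\nabla_yN(x,y)\big]\,\text{d}y\,\text{d}x.
\end{equation*}
Splitting $N(x,y)=\Gamma(x-y)+R(x,y)$ into the free-space Bessel kernel $\Gamma(z)=\tfrac{1}{2\pi}K_0(|z|)$ and a regular part $R$, the singular piece contributes the symmetric kernel $(\nabla\Phi(x)-\nabla\Phi(y))\cdot\nabla\Gamma(x-y)$. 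On $B_{r_1}\times B_{r_1}$ one has $\nabla\Phi(x)=2(x-q)$, hence $\nabla\Phi(x)-\nabla\Phi(y)=2(x-y)$ and this kernel collapses to $-\tfrac1\pi|x-y|K_1(|x-y|)=-\tfrac1\pi+O(|x-y|)$. The constant $-\tfrac1\pi$ produces exactly $-\tfrac{\chi\alpha}{2\pi}(\int_{B_{r_1}}u)^2$, and writing $\int_{B_{r_1}}u=m-\int_{\Omega\setminus B_{r_1}}u$ yields the leading term $-\tfrac{\alpha\chi}{2\pi}m^2$.

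Everything else is error, and controlling it is the main obstacle. Two structural facts make the singular kernel harmless: $|\nabla\Phi(x)-\nabla\Phi(y)|\le L|x-y|$ (since $\Phi\in C^{1,1}$), which cancels the $|x-y|^{-1}$ singularity of $\nabla\Gamma$ and renders the remainder kernels bounded; and $|\nabla\Phi|\le C\sqrt{\Phi}$, which ties each stray $\nabla\Phi$ factor to the weight. Combined with the weighted Cauchy–Schwarz estimates $\int_\Omega u|\nabla\Phi|\,\text{d}x\le C\,m^{1/2}I^{1/2}$ and $\int_{\Omega\setminus B_{r_1}}u\,\text{d}x\le r_1^{-1}m^{1/2}I^{1/2}$ (using $\Phi\ge r_1^2$ off $B_{r_1}$), the $O(|x-y|)$ correction on $B_{r_1}\times B_{r_1}$, the Bessel-versus-Newtonian and regular-part contributions, and the mixed terms all collapse into $C_2\,m^{3/2}I^{1/2}$, while the region where both variables lie outside $B_{r_1}$ yields $C_1\,mI$; the constants depend only on $r_1,r_2$ and $\mathrm{dist}(q,\partial\Omega)$, the last through $R$. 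Summing $J_1$ and $J_2$ gives the claimed inequality. The delicate point throughout is to verify that the deviation of the two-dimensional Bessel kernel from the logarithmic Newtonian kernel near the diagonal, and the Neumann regular part, contribute nothing worse than $m^{3/2}I^{1/2}$, so that the sign and size of the leading $-\tfrac{\alpha\chi}{2\pi}m^2$ are preserved.
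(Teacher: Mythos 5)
Your proposal is correct, and it reconstructs precisely the argument of Nagai's paper \cite{2001Blowup}, which is what this paper relies on: the lemma is quoted here with a citation and no proof is given in the paper itself. All the key ingredients of the cited proof are present and correctly deployed in your sketch --- the $C^{1,1}$ matching of $\phi$, the bound $\Delta\Phi\le 4$, the Green's-function representation of $v$ with symmetrization in $x\leftrightarrow y$, the exact cancellation $(\nabla\Phi(x)-\nabla\Phi(y))\cdot(x-y)=2|x-y|^2$ on $B_{r_1}\times B_{r_1}$ producing the leading $-\tfrac{\alpha\chi}{2\pi}m^2$, and the estimates $|\nabla\Phi|\le C\sqrt{\Phi}$ and $\int_{\Omega\setminus B_{r_1}}u\le r_1^{-1}m^{1/2}I^{1/2}$ that collapse the remainder into the $C_1\,mI$ and $C_2\,m^{3/2}I^{1/2}$ terms.
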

	The finite-time blowup behavior is then proved using the above result.
	\begin{Theorem}\cite{2001Blowup} \label{thm:blowup}
		Assume that $\int_{\Omega}u_0(x)\text{d}x>8\pi/(\alpha\chi)$, if $\int_{\Omega}u_0(x)|x-q|^2\text{d}x$ is sufficiently small,
		then the solution $(u,v)$ to \eqref{1.1}-\eqref{1} blows up in finite time.
	\end{Theorem}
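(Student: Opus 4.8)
The plan is to track the time evolution of the weighted second moment $I(t)=\int_\Omega u(x,t)\Phi(x)\,dx$, with $\Phi$ the truncated quadratic weight of Lemma \ref{lem:blowup}, and to show that under the stated hypotheses $I$ would be forced to vanish in finite time, which is incompatible with the positivity of $u$. First I would invoke Lemma \ref{lem:blowup}, using the mass conservation property so that $\int_\Omega u(x,t)\,dx\equiv\theta$ with $\theta=\int_\Omega u_0\,dx$, to obtain the differential inequality
\begin{equation*}
\frac{d}{dt}I(t)\le -\Big(\frac{\alpha\chi}{2\pi}\theta^2-4\theta\Big)+C_1\theta\,I(t)+C_2\theta^{3/2}I(t)^{1/2}.
\end{equation*}
The decisive observation is that the constant term is governed by the mass: writing $\beta:=\frac{\alpha\chi}{2\pi}\theta^2-4\theta=\theta\big(\frac{\alpha\chi}{2\pi}\theta-4\big)$, the supercritical-mass hypothesis $\theta>8\pi/(\alpha\chi)$ is exactly equivalent to $\beta>0$. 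Thus the right-hand side, viewed as the function $\psi(s):=-\beta+C_1\theta s+C_2\theta^{3/2}s^{1/2}$ of $s=I$, satisfies $\psi(0)=-\beta<0$, is strictly increasing on $[0,\infty)$, and tends to $+\infty$, so it possesses a unique positive root $I^\ast$.

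Next I would convert the smallness of the initial moment into the condition $I(0)<I^\ast$. Since $0\le\Phi(x)\le|x-q|^2$ by the construction of $\phi$, we have $I(0)=\int_\Omega u_0\Phi\,dx\le\int_\Omega u_0|x-q|^2\,dx$, so requiring $\int_\Omega u_0|x-q|^2\,dx<I^\ast$ guarantees $I(0)<I^\ast$. I would then run an invariant-region/comparison argument: as long as $I(t)<I^\ast$ we have $\psi(I(t))<0$, hence $I'(t)<0$, so $I$ is strictly decreasing and the interval $[0,I^\ast)$ is forward invariant; consequently $I(t)\le I(0)$ throughout the existence interval. Feeding this back and using the monotonicity of $\psi$ yields the linear bound
\begin{equation*}
\frac{d}{dt}I(t)\le\psi(I(t))\le\psi(I(0))=:-\delta<0,
\end{equation*}
so that $I(t)\le I(0)-\delta t$.

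Finally, the positivity property together with $\Phi\ge0$ forces $I(t)>0$ on the entire interval of existence, which cannot coexist with $I(t)\le I(0)-\delta t$ beyond $t_\ast:=I(0)/\delta$. Hence the maximal existence time $T_{\max}$ must satisfy $T_{\max}\le t_\ast<\infty$, and by the local well-posedness and continuation theory for \eqref{1.1}-\eqref{1} a finite maximal time is precisely a finite-time blowup. I expect the main obstacle to be not the ODE comparison itself but the careful justification that a finite $T_{\max}$ genuinely corresponds to blowup rather than to a loss of regularity without norm explosion---this relies on the continuation criterion from the local existence theory---together with verifying that all manipulations (differentiating $I$, applying the lemma) are licit on $(0,T_{\max})$ and that the constants $C_1,C_2$, and hence the threshold $I^\ast$, depend only on the geometric data $r_1,r_2,\mathrm{dist}(q,\partial\Omega)$ as asserted.
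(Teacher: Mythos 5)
Your proof is correct and follows essentially the same route as the paper: the paper states this theorem as a citation to Nagai, but its own proof of the discrete counterpart (Theorem \ref{th:blow}) runs exactly parallel to your argument --- the differential (there, difference) inequality coming from Lemma \ref{lem:blowup} (there, Lemma \ref{lem:mk}), positivity of $\beta$ from the supercritical mass, monotone decrease of the moment so that the decay rate stays bounded by its initial value $-\delta$ (there, $-\beta_0$), hence linear decay and a contradiction with positivity of the moment. No gaps; your added care about $0\le\Phi(x)\le|x-q|^2$ and the continuation criterion only sharpens the write-up.
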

	Moreover,  the following pointwise estimates  for $v$ is established in \cite{2015Fujie}. An application of the Neumann semigroup leads to
	\begin{equation*}
	\begin{aligned}
	v(x,t)&\ge\left(\int_{0}^{\infty}\dfrac{1}{(4\pi t)^{\frac{n}{2}}}e^{-(t+\frac{(\mathrm{diam}\Omega)^2}{4t})}\mathrm{d}t\right)\int_{\Omega}u(x,t)\mathrm{d}x\\
	&=\|u_0\|_{L^1(\Omega)}\int_{0}^{\infty}\dfrac{1}{(4\pi t)^{\frac{n}{2}}}e^{-(t+\frac{(\mathrm{diam}\Omega)^2}{4t})}\mathrm{d}t,
	\end{aligned}
	\end{equation*}
	for all $x\in\Omega,\ t\in(0,T]$, whenever $(u,v)$ solves \eqref{1.1}-\eqref{1} in $\Omega\times(0,T]$ for some $T>0$.

	In this paper, we assume that
	\begin{equation*}
	u(x,t)\ge\epsilon_0 \quad \text{for some } \epsilon_0>0,\quad (x,t)\in\overline{\Omega}\times(0,T].
	\end{equation*}

	\section{The fully discrete scheme and error estimates}\label{sec3}
	In this section, we describe the fully discrete scheme  in \cite{2020Unconditionally} for \eqref{1.1}-\eqref{1},
	construct the error equations and establish the error estimates.
	
	We now give a precise description of our  finite element space $X_h$.
	Given a  triangulation $\mathcal{T}$ for $\Omega$, we let $Z_h$ consists of all the vertices excluding those where   Dirichlet boundary conditions are prescribed.
	We define $X_h$ to be the finite element space spanned by the piecewise linear continuous functions based on  $\mathcal{T}$.
	Let $e$ be a triangle of the triangulation $\mathcal{T}$, and $P_{e,i},i=1,2,3$ be its vertices, we define the quadrature formula
	$$Q_{e}(f)=\dfrac{1}{3}\text{area}(e)\sum_{i=1}^{3}f(P_{e,i})\approx\int_{e}f\mathrm{d}x.$$
	We recall that  \cite{2006Thomee}
	\begin{equation*}
	\Big|Q_{e}(f)-\int_{e}f\mathrm{d}x\Big|\le Ch^2\sum_{|\alpha|=2}\|D^{\alpha}f\|_{L^1(e)}.
	\end{equation*}
	We then define the discrete inner product in $X_h$ by
	$$(u,v)_h=\sum_{e\in\mathcal{T}}Q_{e}(uv),$$
	the corresponding norm is defined by $\|\cdot\|_{L_h^2}$.
	We have the following estimates in \cite{2006Thomee} for the quadrature error.
	\begin{lemma}\label{lem:qua}
		Let $\epsilon_h(\cdot,\cdot)=(\cdot,\cdot)_h-(\cdot,\cdot)$ denote the quadrature error, then we have
		\begin{equation*}
		|\epsilon_h(u,v)|\le Ch^2\|\nabla u\|_{L^2(\Omega)}\|\nabla v\|_{L^2(\Omega)},\ \forall\ u,v\in X_h.
		\end{equation*}
	\end{lemma}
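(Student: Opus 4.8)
The plan is to exploit the additivity of the quadrature error over the triangulation together with the element-wise estimate recalled just above. Writing $\epsilon_h(u,v)=\sum_{e\in\mathcal{T}}\bigl(Q_e(uv)-\int_e uv\,\mathrm{d}x\bigr)$ and applying the quoted bound with $f=uv$ on each triangle $e$, it suffices to control $\sum_{|\alpha|=2}\|D^\alpha(uv)\|_{L^1(e)}$ element by element and then reassemble. The reason this localization is unavoidable, and indeed the conceptual heart of the argument, is that $u,v\in X_h$ are merely piecewise smooth, so $uv$ has no global $H^2$ regularity and a single global quadrature estimate is not available.

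The key observation is that on a fixed triangle $e$ both $u$ and $v$ are affine, hence their individual second derivatives vanish in the interior of $e$. By the product rule, for $|\alpha|=2$ the mixed second derivative of $uv$ collapses to a sum of products of first derivatives,
\[
\partial_i\partial_j(uv)=(\partial_i u)(\partial_j v)+(\partial_j u)(\partial_i v)\quad\text{on the interior of }e,
\]
so that $\sum_{|\alpha|=2}|D^\alpha(uv)|\le C|\nabla u|\,|\nabla v|$ pointwise on $e$, with $C$ an absolute constant arising from the finite number of multi-indices.

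Since $\nabla u$ and $\nabla v$ are constant on $e$, integrating over $e$ gives $\sum_{|\alpha|=2}\|D^\alpha(uv)\|_{L^1(e)}\le C|\nabla u|_e\,|\nabla v|_e\,\mathrm{area}(e)$, and the identity $\|\nabla u\|_{L^2(e)}=|\nabla u|_e\,\mathrm{area}(e)^{1/2}$ rewrites this as $C\|\nabla u\|_{L^2(e)}\|\nabla v\|_{L^2(e)}$. Combined with the element-wise quadrature estimate this yields $\bigl|Q_e(uv)-\int_e uv\,\mathrm{d}x\bigr|\le Ch^2\|\nabla u\|_{L^2(e)}\|\nabla v\|_{L^2(e)}$ on each $e$.

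Finally, I would sum over $e\in\mathcal{T}$ and apply the discrete Cauchy–Schwarz inequality, $\sum_e\|\nabla u\|_{L^2(e)}\|\nabla v\|_{L^2(e)}\le\bigl(\sum_e\|\nabla u\|_{L^2(e)}^2\bigr)^{1/2}\bigl(\sum_e\|\nabla v\|_{L^2(e)}^2\bigr)^{1/2}=\|\nabla u\|_{L^2(\Omega)}\|\nabla v\|_{L^2(\Omega)}$, which produces the claimed bound. No step presents a genuine obstacle; the only point requiring attention is the affine reduction of the Hessian of $uv$, since it is precisely this collapse into the gradient product that is responsible for the $\|\nabla u\|\|\nabla v\|$ structure of the estimate rather than a bound involving higher-order norms.
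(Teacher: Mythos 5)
Your proof is correct, and it is essentially the intended argument: the paper itself gives no proof of this lemma (it is quoted from Thom\'ee's book), but it recalls the elementwise bound $\bigl|Q_e(f)-\int_e f\,\mathrm{d}x\bigr|\le Ch^2\sum_{|\alpha|=2}\|D^\alpha f\|_{L^1(e)}$ immediately beforehand precisely so that it can be assembled in the way you do. Your key steps --- the collapse of the Hessian of $uv$ to products of the constant gradients on each triangle, the identity $\|\nabla u\|_{L^2(e)}=|\nabla u|_e\,\mathrm{area}(e)^{1/2}$, and the final Cauchy--Schwarz over elements --- are exactly the standard derivation of this quadrature estimate for piecewise linear elements, so there is nothing to correct.
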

    Applying Lemma \ref{lem:qua},  the norm $\|\cdot\|_{L_h^2}$ has the following property 
    \begin{align}\label{norm}
    \|\nabla u\|_{L_h^2(\Omega)}=\|\nabla u\|_{L^2(\Omega)},\ \forall\ u\in X_h.
    \end{align}

	Let $I_h:\; C(\Omega)\rightarrow X_h$ be the
	Lagrange interpolation operator, which has the approximation property \cite{2008brenner} that  for all $g\in H^2(\Omega)\cap H_0^1(\Omega)$,
	\begin{equation}\label{eq:I_h}
	\|I_hg-g\|_{L^2(\Omega)}\le Ch^2\|g\|_{H^2(\Omega)}\quad \text{and}\quad \|\nabla(I_hg-g)\|_{L^2(\Omega)}\le Ch\|g\|_{H^2(\Omega)}.
	\end{equation}
	
	The fully discrete scheme proposed in \cite{2020Unconditionally} for \eqref{1.1}-\eqref{1} is to find $(u_h^{k+1},v_h^{k+1})\in X_h\times X_h$ such that for all $ (\varphi,\psi)\in X_h\times X_h$, 
	\begin{align}
	&		\left(\overline{\partial} u_h^k,\varphi\right)_h\ +\left(u_h^k\nabla \left(I_h \log u_h^{k+1}\right),\nabla \varphi\right)_h-\chi\left(u_h^k\nabla v_h^k,\nabla \varphi\right)_h=0,  \label{eq:fem1}\\
	&	(\nabla v_h^{k+1},\nabla \psi)+(v_h^{k+1},\psi)-\alpha(u_h^{k+1},\psi)_h=0, \label{eq:fem2}
	\end{align}
with  the initial value  $u_h^0:=I_hu_0$. 	Here,  the $(\cdot,\cdot)$ represents the usual $L^2$ inner product, the $(\cdot,\cdot)_h$ is the discrete inner product defined above, and $\overline{\partial} u_h^k$ is the forward Euler difference quotient approximating to $\partial_tu(t^k)$ defined by
	$$\overline{\partial} u_h^k=\frac{u_h^{k+1}-u_h^k}{\tau}.$$
	In this setting, the authors in \cite{2020Unconditionally} proved the following.
	\begin{lemma}\cite{2020Unconditionally}
		The numerical scheme \eqref{eq:fem1}-\eqref{eq:fem2} has the following properties:
		\begin{enumerate}[(i)]
			\item Unique solvability:\\
			The scheme \eqref{eq:fem1}-\eqref{eq:fem2} has a unique solution $(u_h^{k+1},v_h^{k+1})\in X_h\times X_h$.
			\item Positivity preserving:
				If $u_h^k>0 $, then $u_h^{k+1}>0$.
			\item Mass conservation:	
			$$(u_h^{k+1},1)_h=(u_h^k,1)_h=(u_h^0,1)_h,$$
			It is immediately derived by taking $\varphi =1$ in \eqref{eq:fem1}.
			\item  Energy decay:
			\begin{equation*}
			\tilde{F}^{k+1}-\tilde{F}^{k}\le-\tau\left(u_h^k\nabla(I_h(\log u_h^{k+1}-\chi v_h^k)),\nabla(I_h(\log u_h^{k+1}-\chi v_h^k))\right)_h\le0,
			\end{equation*}
			where the discrete energy of \eqref{eq:fem1}-\eqref{eq:fem2} is defined by
			$$\tilde{F}^{k}[u,v]=(u_h^k(\log u_h^k-1),1)_h-\chi(u_h^k,v_h^k)_h+\dfrac{\chi}{2\alpha}\|\nabla v_h^k\|^2+\dfrac{\chi}{2\alpha}\|v_h^k\|^2.$$
		\end{enumerate}
		
	\end{lemma}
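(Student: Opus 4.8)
The plan is to handle the four properties in order of increasing difficulty, isolating the solvability of the nonlinear $u$-equation as the only genuine obstacle. Property (iii) is immediate: since \eqref{eq:fem1} holds for every $\varphi\in X_h$, taking $\varphi\equiv 1$ makes $\nabla\varphi=0$ annihilate both gradient terms, leaving $(\overline{\partial}u_h^k,1)_h=0$, i.e. $(u_h^{k+1},1)_h=(u_h^k,1)_h$, and induction back to $u_h^0=I_hu_0$ finishes it. This also reduces the unique solvability to the first equation alone, because for a given $u_h^{k+1}$ equation \eqref{eq:fem2} is a linear elliptic problem whose bilinear form $(\nabla\cdot,\nabla\cdot)+(\cdot,\cdot)$ is exactly the $H^1$ inner product, hence symmetric and coercive, so Lax--Milgram produces a unique $v_h^{k+1}$.

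The crux is the nonlinear term $(u_h^k\nabla I_h\log u_h^{k+1},\nabla\varphi)_h$ in \eqref{eq:fem1}. The idea I would use is to change the unknown to $\mu:=I_h\log u_h^{k+1}$, so that the nodal values satisfy $u_h^{k+1}=I_he^{\mu}$. In terms of $\mu$ the offending term becomes \emph{linear}, and \eqref{eq:fem1} becomes precisely the Euler--Lagrange equation of
\[
J(\mu)=\tfrac1\tau\big(I_he^{\mu},1\big)_h+\tfrac12\big(u_h^k\nabla\mu,\nabla\mu\big)_h-\tfrac1\tau\big(u_h^k,\mu\big)_h-\chi\big(u_h^k\nabla v_h^k,\nabla\mu\big)_h .
\]
I would verify directly that $\tfrac{d}{d\epsilon}J(\mu+\epsilon\varphi)\big|_{\epsilon=0}$ reproduces \eqref{eq:fem1} after writing $(\overline{\partial}u_h^k,\varphi)_h=\tfrac1\tau(I_he^{\mu}-u_h^k,\varphi)_h$. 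Because $u_h^k>0$ by the induction hypothesis and the lumped quadrature weights $m_i=\tfrac13\sum_{e\ni P_i}\mathrm{area}(e)$ are strictly positive, each exponential contribution $\tfrac1\tau m_ie^{\mu_i}$ is strictly convex and the weighted Dirichlet form is convex, so $J$ is strictly convex; consequently a minimizer is unique, and positivity $u_h^{k+1}=I_he^{\mu}>0$ is automatic. This change of variables is what simultaneously secures (i) and (ii).

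Existence of the minimizer follows from coercivity, and I expect this to be the one step requiring care, since the Dirichlet form is degenerate on constants and so cannot supply coercivity by itself. I would split $\mu=c+\mu_0$ into its constant part and a zero-mean remainder. On $\mu_0$, using $u_h^k\ge\epsilon_0>0$ and the identity \eqref{norm}, one gets $(u_h^k\nabla\mu_0,\nabla\mu_0)_h\ge(\min_i u_h^k(P_i))\,\|\nabla\mu_0\|_{L^2(\Omega)}^2$, and Poincar\'e's inequality for zero-mean functions yields coercivity in the $\mu_0$ direction. In the constant direction the exponential term dominates as $c\to+\infty$, while the linear term $-\tfrac1\tau(u_h^k,\mu)_h$ dominates as $c\to-\infty$ because $(u_h^k,1)_h>0$. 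Hence $J(\mu)\to+\infty$ as $\|\mu\|\to\infty$, a minimizer exists, and (i)--(ii) are proved.

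For the energy decay (iv) I would exploit the convex-splitting structure. Testing \eqref{eq:fem1} with $\varphi=I_h(\log u_h^{k+1}-\chi v_h^k)$ collapses the two gradient terms into the dissipation $\big(u_h^k\nabla\varphi,\nabla\varphi\big)_h\ge0$, which is nonnegative since $u_h^k>0$. To bound the entropy difference from below I would apply convexity of $\Phi(s)=s\log s-s$ nodally, $\Phi(a)-\Phi(b)\le\Phi'(a)(a-b)=\log a\,(a-b)$ with $a=u_h^{k+1}$, $b=u_h^k$, giving $(u_h^{k+1}(\log u_h^{k+1}-1)-u_h^k(\log u_h^k-1),1)_h\le(I_h\log u_h^{k+1},u_h^{k+1}-u_h^k)_h$. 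The quadratic part of $\tilde F$ is handled by testing \eqref{eq:fem2} with $\psi=v_h^{k+1}-v_h^k$ and using $(a-b,a)=\tfrac12(\|a\|^2-\|b\|^2+\|a-b\|^2)$ on the $\|\nabla v\|^2+\|v\|^2$ terms, together with direct treatment of the coupling $-\chi(u_h^k,v_h^k)_h$. Assembling these inequalities and substituting the $(\overline{\partial}u_h^k,\cdot)_h$ term from the scheme yields $\tilde F^{k+1}-\tilde F^k\le-\tau\big(u_h^k\nabla I_h(\log u_h^{k+1}-\chi v_h^k),\nabla I_h(\log u_h^{k+1}-\chi v_h^k)\big)_h\le0$.
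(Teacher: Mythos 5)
Your proposal is correct. Note that the paper itself does not prove this lemma at all---it is imported verbatim from \cite{2020Unconditionally}, with only the one-line remark that (iii) follows by taking $\varphi=1$ in \eqref{eq:fem1}---so the relevant comparison is with that reference, and your argument coincides with it: the change of variables $\mu=I_h\log u_h^{k+1}$ (so $u_h^{k+1}=I_he^{\mu}$), which turns \eqref{eq:fem1} into the Euler--Lagrange equation of a strictly convex functional whose coercivity is obtained by splitting off the constant mode, delivering unique solvability and positivity simultaneously, together with energy decay via testing with $I_h(\log u_h^{k+1}-\chi v_h^k)$, nodal convexity of $s\log s-s$, and testing \eqref{eq:fem2} with $\psi=v_h^{k+1}-v_h^k$, is precisely the convex-splitting argument of the cited work. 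One cosmetic point: in the coercivity step you invoke $u_h^k\ge\epsilon_0$, but $\epsilon_0$ is the paper's lower bound for the exact solution; what you actually need (and what your own display uses) is just $\min_i u_h^k(P_i)>0$, which holds for the fixed, finitely many nodal values guaranteed positive by the induction hypothesis.
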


	We denote by $(u,v)$ the exact solution pair to the original equations \eqref{1.1}-\eqref{1.2}, and all the upper bounds for the exact solution are denoted as $C$. 
	We set  $u^k=u(t_k),\, v^k=v(t_k)$, and denote
	\begin{equation*}
	e_u^k:=u^k-u_h^k,\quad e_v^k:=v^k-v_h^k,\quad \forall k\in\mathbb{N}.
	\end{equation*}
	The following theorem is the main result of this section.
	\begin{Theorem}\label{th.err}
		Assume $u_0\in W^{2,p}(\Omega)(1<p<\infty)$ and the exact solution pair $(u,v)$ is smooth enough for a fixed final time $T>0$.
		Then, provided $\tau$ and $h$ are sufficiently small and under the mild mesh-sizes requirement
		$\tau\le Ch$,
		we have the following error estimates
		\begin{equation*}
		\|e_u^m\|_{L_h^2(\Omega)}+	\|e_v^m\|_{H_h^1(\Omega)}+\Big(\tau\sum_{k=0}^{m-1}\|\nabla e_u^{k+1}\|_{L_h^2(\Omega)}^2\Big)^{1/2}\le C(\tau+h),\quad \forall m\in\mathbb{N},
		\end{equation*}
		where $t_m=m\tau\le T$, $C>0$ is independent of $\tau$ and $h$.
	\end{Theorem}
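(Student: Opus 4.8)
The plan is to derive error equations by substituting the exact solution into the scheme \eqref{eq:fem1}--\eqref{eq:fem2}, to split each error into an interpolation part and an auxiliary finite-element part, and to close the estimate by a discrete energy argument followed by a discrete Gronwall inequality. Writing $e_u^k=\rho_u^k+\theta_u^k$ with $\rho_u^k=u^k-I_hu^k$ (controlled by \eqref{eq:I_h}) and $\theta_u^k=I_hu^k-u_h^k\in X_h$, and similarly $e_v^k=\rho_v^k+\theta_v^k$, one first observes that the chemoattractant error is slaved to the density error: since \eqref{eq:fem2} is elliptic, testing its error equation with $\theta_v^{k+1}$ and invoking Lemma \ref{lem:qua} yields at once $\|e_v^{k+1}\|_{H_h^1}\le C\big(\|e_u^{k+1}\|_{L_h^2}+h\big)$, so no temporal accumulation occurs on $v$. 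The genuine difficulty is concentrated in \eqref{eq:fem1}, where the diffusion is represented through $I_h\log u_h^{k+1}$. This nonlinearity is only locally Lipschitz, and linearising it by the mean value theorem produces a factor $1/\xi$ with $\xi$ between $u_h^{k+1}$ and $u^{k+1}$, which can be controlled only if the numerical solution possesses a \emph{uniform} positive lower bound.

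Securing that lower bound is the crux. A direct induction on the target $O(\tau+h)$ bound is inadequate: converting $\|\theta_u^k\|_{L_h^2}=O(\tau+h)$ into an $L^\infty$ bound through the inverse inequality $\|\theta_u^k\|_{L^\infty}\le Ch^{-1}\|\theta_u^k\|_{L_h^2}$ yields only an $O(1)$ error, not a small one, so positivity of $u_h^k$ cannot be quantified this way. I would therefore follow the higher-order consistency strategy cited in the introduction (as in \cite{2021wangcheng,2020duan,2021duan}). First I would construct a corrected approximate solution $(\hat u_h,\hat v_h)$ by adding suitable temporal and spatial correction terms to $(I_hu,I_hv)$, arranged so that $(\hat u_h,\hat v_h)$ satisfies \eqref{eq:fem1}--\eqref{eq:fem2} up to a residual of order $O(\tau^2+h^2)$ while remaining uniformly close to the exact solution, whence $\hat u_h\ge\epsilon_0/2$. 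A \emph{rough} energy estimate for the differences $\hat u_h^k-u_h^k,\ \hat v_h^k-v_h^k$, carried inside the same induction and using the positivity-preserving property to keep $u_h^{k+1}>0$, then bounds them in $L_h^2$ at the improved rate $O(\tau^2+h^2)$; invoking the inverse inequality together with the mild restriction $\tau\le Ch$ gives $\|\hat u_h^k-u_h^k\|_{L^\infty}\le Ch^{-1}(\tau^2+h^2)\le C(\tau+h)\to0$, so that $u_h^k\ge\epsilon_0/4>0$ uniformly for all $k$ with $t_k\le T$.

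With uniform positivity in hand the logarithm is globally Lipschitz on the relevant range, and I would carry out the \emph{refined} estimate. Testing the density-error equation with $\theta_u^{k+1}$, the time difference produces $\tfrac{1}{2\tau}\big(\|\theta_u^{k+1}\|_{L_h^2}^2-\|\theta_u^{k}\|_{L_h^2}^2\big)$, while the decomposition
\[
u_h^k\nabla\!\big(I_h\log u_h^{k+1}\big)-\nabla u^{k+1}
=-e_u^k\,\nabla\!\big(I_h\log u_h^{k+1}\big)+u^k\,\nabla I_h\!\big(\log u_h^{k+1}-\log u^{k+1}\big)+\text{(interp.)}
\]
isolates a coercive contribution comparable to $\tfrac{\epsilon_0}{C}\|\nabla\theta_u^{k+1}\|_{L^2}^2$ (using \eqref{norm}, so gradient terms carry no quadrature error) against lower-order commutator and interpolation terms of order $O(\tau+h)$. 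The chemotaxis term $\chi(u_h^k\nabla v_h^k,\nabla\theta_u^{k+1})_h$ is treated by splitting off the already-controlled $v$-error and the quadrature bound of Lemma \ref{lem:qua}, then absorbed via Cauchy--Schwarz and Young into the coercive $\|\nabla\theta_u^{k+1}\|$ term and a $\|e_v^k\|_{H_h^1}^2$ term fed back through the elliptic estimate. Summing over $k$ and applying discrete Gronwall—legitimate once $\tau$ is sufficiently small—yields the asserted bound. The main obstacle, as stressed, is precisely the positivity lower bound forced by the logarithmic nonlinearity; everything downstream is a careful but routine energy computation once that bound has been established through the higher-order-consistency/rough-estimate bootstrap.
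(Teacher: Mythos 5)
Your overall architecture --- the interpolation/finite-element splitting of the error, slaving the $v$-error to the $u$-error through the elliptic equation, identifying the uniform positive lower bound for $u_h^k$ as the crux, and resolving it by a higher-order-consistency construction plus a rough-estimate bootstrap before the refined Gronwall estimate --- is exactly the paper's strategy (Sections 3.1--3.4). However, there is a genuine quantitative gap: you claim a corrected solution with residual $O(\tau^2+h^2)$ suffices, and this order is too low for the bootstrap to close. The difficulty is that the rough estimate must produce $L^\infty$ and $W^{1,\infty}$ control of $u_h^{k+1}$ \emph{before} any separation or upper bound at the new time level is available (the implicit $\log u_h^{k+1}$ makes any Gronwall-type refined estimate at level $k+1$ circular without them: the coercivity rewriting needs $u_h^k/I_h\xi$ bounded below, with $\xi$ between $\hat u^{k+1}$ and $u_h^{k+1}$). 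The paper's mechanism for this --- testing with $\tau\tilde\mu_u^k$ and then with $\tilde u^{k+1}-\tilde u^k$, see \eqref{eq:mu} and \eqref{eq:r2} --- unavoidably loses one power of $h$ through an inverse inequality applied to $\Delta\tilde\mu_u^k$; two more powers are then spent converting the resulting $L_h^2$ bound into $\|\tilde u^{k+1}\|_\infty$ and $\|\nabla\tilde u^{k+1}\|_\infty$. Starting from an induction hypothesis of order $h^q$ one therefore ends with $h^{q-3}$ for the gradient bound, so the argument needs $q>3$ together with consistency order $p\ge q$ for the Gronwall recovery; this is precisely why the paper constructs three corrections $hf_1+h^2f_2+h^3f_3$ to reach $O(h^4)$ and runs the induction at the exponent $15/4$ (the chain $h^{15/4}\to h^{11/4}\to h^{7/4}\to h^{3/4}$ keeps every exponent positive).

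With your $p=2$ the same chain reads $h^2\to h^1\to h^0\to h^{-1}$: after the unavoidable one-power loss, the $L^\infty$ bound at the new time level is only $O(1)$ --- no smallness, hence no separation $u_h^{k+1}\ge\epsilon_0/4$ --- and the $W^{1,\infty}$ bound, which your own refined step needs to handle $\nabla\log u_h^{k+1}$, diverges. Your sentence ``invoking the inverse inequality \ldots gives $\|\hat u_h^k-u_h^k\|_{L^\infty}\le Ch^{-1}(\tau^2+h^2)$'' silently assumes the $O(\tau^2+h^2)$ bound is already available at the time level where positivity is still unknown, which is exactly what the power-losing rough step prevents. The fix is not a technicality: you must build the expansion to higher order (the paper's $O(h^4)$, exploiting $\tau\le Ch$ so that only spatial correction functions are needed rather than your separate temporal and spatial corrections), run the induction at an intermediate exponent such as $15/4$, and only then does your otherwise correct refined energy estimate with discrete Gronwall go through.
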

	
	The proof for this theorem will be carried out with a sequence of procedures that we describe below.
	\begin{remark}
	The mesh-sizes requirement $\tau\le Ch$ in Theorem \ref{th.err} is proposed to obtain a higher order consistency analysis via a perturbation argument, which is needed to get the separation property and the $W^{1,\infty}$ bound for the numerical solution.
	\end{remark}

	\subsection{Higher order consistent approximation to \eqref{eq:fem1}-\eqref{eq:fem2}}
	In this subsection, we apply the perturbation argument method in \cite{2021wangcheng} to the finite element scheme to construct $f_{1},f_{2},f_{3}$ such that
	\begin{equation*}
	\hat{u}:=u+hf_{1}+h^2f_{2}+h^3f_{3},
	\end{equation*}
	is  consistent  with the given numerical scheme \eqref{eq:fem1}-\eqref{eq:fem2} at the order  $O(h^4)$. 
	The following lemma is used to construct $f_{1},f_{2},f_{3}$ and the proof is given in Appendix.
	
By applying a perturbation argument, a higher order $O(h^4)$ consistency is satisfied for $\hat{u}$, which is needed to obtain the separation property and a $W^{1,\infty}$ bound for the numerical solution.
	\begin{lemma}\label{lem:f3}
	Suppose that $\tau\le Ch$ and $u$ is smooth enough, then there exist bounded smooth  functions $f_1,f_2,f_3$, such that $\hat{u}=u+hf_{1}+h^2f_{2}+h^3f_{3}$ satisfies
	\begin{align}\label{eq:consistency}
	\left(\overline{\partial}\hat{u}^k,\varphi\right)_h+\left(\hat{u}^k\nabla I_h\log \hat{u}^{k+1},\nabla\varphi\right)_h-\chi\left(\hat{u}^k\nabla A_h\hat{u}^k,\nabla\varphi\right)_h=\langle\mathcal{R}^k(\hat{u}),\varphi\rangle,
	\end{align}
	for all $\varphi\in X_h$, $k\in\mathbb{N}$, where $A_h=\alpha(-\Delta_h+I)^{-1}Q_h$ and $\langle \cdot,\cdot\rangle$ denotes the duality product satisfying
	\begin{align}\label{eq:r}
		|\langle\mathcal{R}^k(\hat{u}^k),\varphi\rangle|\le Ch^4\|\varphi\|_{H^1},
	\end{align}
	where $C$ depends on the regularity of the solution $u$.
\end{lemma}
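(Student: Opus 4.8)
The plan is to read $\hat u=u+hf_1+h^2f_2+h^3f_3$ as a corrector expansion and to choose $f_1,f_2,f_3$ so that the three discretization mechanisms present in \eqref{eq:consistency} --- the forward-Euler difference, the Lagrange interpolation together with the quadrature hidden in $(\cdot,\cdot)_h$, and the discrete elliptic solve $A_h$ --- are cancelled successively up to order $h^4$. Writing the residual functional
\[
\langle\mathcal N_h[w],\varphi\rangle:=\left(\overline{\partial} w^k,\varphi\right)_h+\left(w^k\nabla I_h\log w^{k+1},\nabla\varphi\right)_h-\chi\left(w^k\nabla A_h w^k,\nabla\varphi\right)_h,
\]
the claim \eqref{eq:consistency}--\eqref{eq:r} is exactly that $\langle\mathcal N_h[\hat u],\varphi\rangle=O(h^4)\|\varphi\|_{H^1}$. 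Substituting the exact solution first, one checks that $\mathcal N_h[u]$ is only $O(h)$: the gradient interpolation error $\nabla(I_h\log u-\log u)$ is $O(h)$ by \eqref{eq:I_h}, the quadrature defect $\epsilon_h$ is $O(h^2)$ by Lemma \ref{lem:qua}, the replacement of the continuous solve $\mathcal A:=\alpha(-\Delta+I)^{-1}$ by $A_h$ is $O(h)$ in the gradient, and, since $\tau\le Ch$, the time-truncation $\overline{\partial} u^k-\partial_t u(t_k)$ is likewise $O(h)$. The correctors are built to annihilate these in turn.

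First I would expand every term of $\langle\mathcal N_h[\hat u],\varphi\rangle$ as a power series in $h$, using $\tau\le Ch$ to fold all powers of $\tau$ into powers of $h$. A Taylor expansion in time of $\overline{\partial}\hat u^k$, a Taylor expansion of $\log(u+hf_1+h^2f_2+h^3f_3)$ about $u$ (legitimate because $u\ge\epsilon_0>0$), and the asymptotic expansions of $I_h-\mathrm{Id}$, of the quadrature error $\epsilon_h$, and of $A_h-\mathcal A$ in $h$ together produce an identity of the form
\[
\langle\mathcal N_h[\hat u],\varphi\rangle=\sum_{j=1}^{3}h^{j}\,\langle \mathcal L f_j+G_j,\varphi\rangle+O(h^4)\|\varphi\|_{H^1},
\]
where $\mathcal L$ is the linearization about $u$ of the Keller-Segel evolution operator $w\mapsto\partial_t w-\Delta w+\chi\nabla\cdot(w\nabla\mathcal A w)$, and each forcing $G_j$ depends only on $u$ and on the previously determined correctors $f_1,\dots,f_{j-1}$.

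Next I would determine the correctors recursively by forcing each bracket to vanish: for $j=1,2,3$ solve the linear parabolic problem $\mathcal L f_j=-G_j$ with zero initial data $f_j(\cdot,0)=0$ and boundary conditions compatible with the finite element setting, so that $\hat u^0=u_0$ and the starting error $\hat u^0-u_h^0=u_0-I_hu_0$ remains of high order. Because $u$ is smooth and bounded below, $\mathcal L$ is uniformly parabolic with smooth coefficients, so standard parabolic regularity furnishes smooth, globally bounded solutions $f_1,f_2,f_3$ on $[0,T]$; the resulting control of their space-time derivatives is precisely what legitimizes the expansion at the next order.

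The main obstacle is the bookkeeping of the term $\left(\hat u^k\nabla I_h\log\hat u^{k+1},\nabla\varphi\right)_h$: one must expand $\log\hat u^{k+1}$ in $h$, push the expansion through the interpolation operator $I_h$ and the quadrature-based inner product, and check that at each order the forcing $G_j$ is a \emph{smooth} function of $u$ and the lower correctors (so that parabolic regularity applies), while verifying that all genuinely discrete remainders --- from $I_h$, from $\epsilon_h$, and from $A_h-\mathcal A$ --- really collect into integer powers of $h$ with $H^1$-dual bounds. Once the three brackets are annihilated, the leftover is, by construction together with Lemma \ref{lem:qua} and \eqref{eq:I_h}, bounded by $Ch^4\|\varphi\|_{H^1}$, which is \eqref{eq:r}; the detailed computation is carried out in the Appendix.
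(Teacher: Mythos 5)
Your proposal follows essentially the same route as the paper's Appendix: the correctors $f_j$ are built recursively by solving linear parabolic problems whose operator is precisely the linearization of the Keller--Segel evolution about $u$ (weak form $(\partial_t f,\varphi)+(\nabla f,\nabla\varphi)-\chi(u\nabla Af,\nabla\varphi)-\chi(f\nabla Au,\nabla\varphi)$ with $A=\alpha(-\Delta+I)^{-1}$), with zero initial data and linear parabolic regularity supplying smooth bounded solutions, and with the residual gaining one power of $h$ at each stage under $\tau\le Ch$. The only (cosmetic) difference is that the paper does not expand the discrete defects from $I_h$, $\epsilon_h$ and $A_h-A$ into a power series with $h$-independent coefficients $G_j$; instead it takes the full $h$-dependent residual of the previous stage, rescaled by $h^{-j}$ and shown uniformly bounded in $H^{-1}$, as the forcing, so the correctors are $h$-dependent but uniformly bounded families --- the rigorous version of your ``fold everything into integer powers of $h$'' step.
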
  
	\begin{remark}
		Under the conditions that
		the exact solution $u\ge\epsilon_0$ for some $\epsilon_0>0$,
		  and $h$ is sufficiently small, we obtain that
		\begin{equation}\label{eq:u}
		\hat{u}\ge\dfrac{\epsilon_0}{2}.
		\end{equation}
		Since the correction functions $f_j,j=1,2,3$ only depend on the exact solution $u$, they are bounded in $W^{1,\infty}$ norm.
		Then, we can obtain the following $W^{1,\infty}$ bound for $\hat{u}$:
		\begin{equation}\label{eq:w1,infty}
		\|\hat{u}^k\|_{W^{1,\infty}}\le C,\quad\forall k\ge0.
		\end{equation}
	\end{remark}
	\subsection{A rough error estimate}
	In this subsection, we derive the strict separation property and a uniform $W^{1,\infty}$ bound for the numerical solution. 
	
		We recall the following inverse estimate in \cite[p.111, Lemma 4.5.3]{2008brenner}.
	\begin{lemma}
		Given a quasi-uniform triangulation $\mathcal{T}$ on domain $\Omega\subset \mathbb{R}^n$, and $X_h$ be a finite-dimensional subspace of $W^{l,p}(K)\cap W^{m,q}(K)$, where $1\le p\le\infty,1\le q\le\infty$ and $0\le m\le l$. Then there exists a positive constant $C$ such that for all $u\in X_h$, we have
		\begin{equation*}
		\|u\|_{W^{l,p}(K)}\le Ch^{m-l+n/p-n/q}\|u\|_{W^{m,q}(K)},
		\end{equation*}
		where $C$ is independent of $u$.
	\end{lemma}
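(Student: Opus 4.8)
The plan is to localize the estimate to a single fixed reference element and then reduce everything to the equivalence of norms on a finite-dimensional space. First I would fix a reference simplex $\hat K$, and for each element $K\in\mathcal{T}$ introduce the affine map $F_K(\hat x)=B_K\hat x+b_K$ with $F_K(\hat K)=K$; for $u\in X_h$ set $\hat u=u\circ F_K$, which is a polynomial on $\hat K$ of the fixed degree determined by $X_h$. The standard affine scaling relations for Sobolev seminorms then give, for every integer order $s$,
\begin{equation*}
|u|_{W^{s,p}(K)}\le C\,\|B_K^{-1}\|^{s}\,|\det B_K|^{1/p}\,|\hat u|_{W^{s,p}(\hat K)},\qquad |\hat u|_{W^{s,q}(\hat K)}\le C\,\|B_K\|^{s}\,|\det B_K|^{-1/q}\,|u|_{W^{s,q}(K)}.
\end{equation*}
Quasi-uniformity of the triangulation is used precisely here: it yields the uniform geometric bounds $\|B_K\|\le Ch$, $\|B_K^{-1}\|\le Ch^{-1}$ and $|\det B_K|\sim h^{n}$, with constants independent of $K$ and $h$.

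The heart of the argument is that, on the fixed reference element $\hat K$, the function $\hat u$ ranges over a single finite-dimensional polynomial space on which all Sobolev norms are equivalent; in particular $|\hat u|_{W^{l,p}(\hat K)}\le C\|\hat u\|_{W^{m,q}(\hat K)}$ with $C$ independent of $K$, since $\hat K$ does not scale with $h$. Chaining this inequality between the two scaling relations above, using the first with $s=l$ to pass from $K$ to $\hat K$ and the second (summed over the seminorm orders $0\le s\le m$) to return from $\hat K$ to $K$, collects the factors $\|B_K^{-1}\|^{l}\sim h^{-l}$, $|\det B_K|^{1/p}\sim h^{n/p}$, $\|B_K\|^{m}\sim h^{m}$ and $|\det B_K|^{-1/q}\sim h^{-n/q}$, which combine into the advertised exponent $m-l+n/p-n/q$ and give the desired element-local estimate.

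The only genuinely delicate point, and the step I expect to require the most care, is the bookkeeping of these scaling powers: one must keep $0\le m\le l$ so that all seminorms involved actually appear on the correct side of the reference-element norm equivalence, and one must verify that the constant $C$ depends only on $\hat K$, the degree of $X_h$, and the shape-regularity parameter, not on the individual element. If a global bound over $\Omega$ is wanted rather than the stated per-element version, the extra subtlety is the passage between the discrete $\ell^p$ and $\ell^q$ norms of the element contributions, which for $p\ne q$ requires discrete H\"older together with the cardinality bound $\#\mathcal{T}\sim h^{-n}$; this is the place where the precise form of the $h$-exponent is ultimately pinned down.
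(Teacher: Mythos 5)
Your overall strategy (affine pullback to a fixed reference simplex, equivalence of norms on a finite-dimensional polynomial space, scaling back) is the standard route --- the paper itself offers no proof, it simply quotes this as Lemma~4.5.3 of Brenner--Scott --- but your execution has a genuine gap, and it sits exactly at the step you flagged as ``bookkeeping.'' The return trip from $\hat K$ to $K$ does not produce the factor $\|B_K\|^{m}\sim h^{m}$ you claim. Writing $\|\hat u\|_{W^{m,q}(\hat K)}\simeq\sum_{s=0}^{m}|\hat u|_{W^{s,q}(\hat K)}$ and applying the scaling relation termwise gives
\begin{equation*}
\|\hat u\|_{W^{m,q}(\hat K)}\le C\,|\det B_K|^{-1/q}\sum_{s=0}^{m}\|B_K\|^{s}\,|u|_{W^{s,q}(K)} ,
\end{equation*}
and since $\|B_K\|^{s}\sim h^{s}\le 1$, the dominant factor comes from the $s=0$ (i.e.\ $L^q$) term and equals $1$, not $h^{m}$: the gain $h^{m}$ multiplies only the top seminorm, which is just one summand of the norm on the right. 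Consequently your chain proves only $\|u\|_{W^{l,p}(K)}\le Ch^{-l+n/p-n/q}\|u\|_{W^{m,q}(K)}$, which is strictly weaker than the stated lemma whenever $m\ge1$. For instance, with $n=1$, $K=[0,h]$, $u(x)=(x/h)^2$, $p=q=2$, $l=2$, $m=1$, the lemma asserts the sharp bound $\|u\|_{H^2(K)}\le Ch^{-1}\|u\|_{H^1(K)}$ (both sides $\sim h^{-3/2}$), whereas your argument only yields the lossy bound $Ch^{-2}\|u\|_{H^1(K)}$.

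The repair is the classical two-step argument. First prove the case $m=0$: there the right-hand side is purely $\|u\|_{L^q(K)}$, the return trip involves only the $s=0$ scaling factor, and your chaining is perfectly valid, giving $\|u\|_{W^{l,p}(K)}\le Ch^{-l+n/p-n/q}\|u\|_{L^q(K)}$. Then, for $1\le m\le l$, estimate each seminorm of $\|u\|_{W^{l,p}(K)}$ separately. For $m\le s\le l$, every $D^{\alpha}u$ with $|\alpha|=m$ again lies in a fixed finite-dimensional space under pullback, so the $m=0$ case applied to $D^{\alpha}u$ with differentiation order $s-m$ yields $|u|_{W^{s,p}(K)}\le C\sum_{|\alpha|=m}|D^{\alpha}u|_{W^{s-m,p}(K)}\le Ch^{m-s+n/p-n/q}\,|u|_{W^{m,q}(K)}$, which is where the factor $h^{m}$ legitimately appears, paired with the top seminorm alone. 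For $0\le s<m$ the cruder bound $|u|_{W^{s,p}(K)}\le Ch^{n/p-n/q}|u|_{W^{s,q}(K)}\le Ch^{n/p-n/q}\|u\|_{W^{m,q}(K)}$ suffices, since $h^{n/p-n/q}\le Ch^{m-l+n/p-n/q}$ for bounded $h$. Summing over $0\le s\le l$ gives the claimed exponent. Two further small points: the norm-equivalence step tacitly requires that all pullbacks $u\circ F_K$ lie in one fixed finite-dimensional space on $\hat K$ (true for the piecewise-linear $X_h$ of the paper, but worth stating), and your closing remark about discrete H\"older and $\#\mathcal{T}\sim h^{-n}$ is beside the point here, as the lemma is a per-element statement.
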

	We will also use the following discrete Gronwall inequality in \cite{1990shen,Yan2018ASE}.
	\begin{lemma}\label{lem:gronwall}
		Assume that $\tau>0,B>0,\{a_k\},\{b_k\},\{\gamma_k\}$ are non-negative sequences such that \begin{equation*}
		a_m+\tau\sum_{k=1}^{m}b_k\le\tau\sum_{k=1}^{m-1}\gamma_ka_k+B,\ m\ge 1.
		\end{equation*}
		Then
		\begin{equation*}
		a_m+\tau\sum_{k=1}^{m}b_k\le B\mathrm{exp}\left(\tau\sum_{k=1}^{m-1}\gamma_k\right),\ m\ge 1.
		\end{equation*}
	\end{lemma}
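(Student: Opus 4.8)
The plan is to prove this purely algebraically by induction, reducing the two-sided form involving $\{a_k\}$ and $\{b_k\}$ to a one-sided recursion in a single majorizing sequence and then closing it with the elementary inequality $1+x\le e^x$. The crucial structural feature to exploit is that the summation on the right-hand side runs only up to $m-1$, so that $a_m$ never appears on the right; this is precisely what allows the conclusion to hold for every $m\ge1$ with no smallness assumption on $\tau$ or on $\gamma_m$.

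First I would introduce the quantity $S_m:=a_m+\tau\sum_{k=1}^m b_k$, which is exactly the left-hand side we wish to bound. Since every $b_k\ge0$, we have $a_k=S_k-\tau\sum_{j=1}^k b_j\le S_k$ for each $k$, and because $\gamma_k\ge0$ the hypothesis can therefore be rewritten as $S_m\le\tau\sum_{k=1}^{m-1}\gamma_k a_k+B\le\tau\sum_{k=1}^{m-1}\gamma_k S_k+B$. This puts the estimate into a self-referential form in the single nonnegative sequence $\{S_k\}$, which is all the recursion needs.

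Next I would define the nondecreasing majorant $\hat{S}_m:=B+\tau\sum_{k=1}^{m-1}\gamma_k S_k$, so that $S_m\le\hat{S}_m$ and $\hat{S}_1=B$. A one-step comparison gives $\hat{S}_{m+1}-\hat{S}_m=\tau\gamma_m S_m\le\tau\gamma_m\hat{S}_m$, whence $\hat{S}_{m+1}\le(1+\tau\gamma_m)\hat{S}_m$. Iterating this scalar recursion yields $\hat{S}_m\le B\prod_{k=1}^{m-1}(1+\tau\gamma_k)$, and applying $1+x\le e^x$ to each factor converts the product into $B\exp\bigl(\tau\sum_{k=1}^{m-1}\gamma_k\bigr)$. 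Since $S_m\le\hat{S}_m$, the desired bound follows for all $m\ge1$.

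I do not expect a genuine obstacle here, as this is the standard discrete Gronwall estimate; the only points requiring care are bookkeeping ones, namely using $b_k\ge0$ to replace $a_k$ by $S_k$ on the right and keeping the upper summation index at $m-1$ throughout so that the recursion for $\hat{S}_m$ telescopes cleanly. An essentially equivalent route is direct strong induction on $m$, where in the inductive step one controls $\sum_{k}(\sigma_k-\sigma_{k-1})e^{\sigma_{k-1}}$ via the convexity bound $e^{a}(b-a)\le e^{b}-e^{a}$, with $\sigma_k:=\tau\sum_{j=1}^k\gamma_j$; I would nonetheless keep the majorant argument as the cleaner presentation.
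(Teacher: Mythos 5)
Your proof is correct. Note that the paper itself does not prove this lemma at all: it simply quotes the discrete Gronwall inequality from the cited references (Shen 1990; Yan et al.\ 2018), so there is no in-paper argument to compare against. Your majorant argument is the standard, self-contained proof and every step checks out: the reduction $a_k\le S_k$ uses only $b_k\ge 0$ and $\tau>0$; the definition $\hat{S}_m=B+\tau\sum_{k=1}^{m-1}\gamma_k S_k$ gives $\hat{S}_1=B$ (empty sum) and the exact telescoping increment $\hat{S}_{m+1}-\hat{S}_m=\tau\gamma_m S_m\le\tau\gamma_m\hat{S}_m$, which needs $\gamma_k\ge 0$ and $S_m\le\hat{S}_m$; and the product bound $\prod_{k=1}^{m-1}(1+\tau\gamma_k)\le\exp\bigl(\tau\sum_{k=1}^{m-1}\gamma_k\bigr)$ closes the estimate with no smallness condition on $\tau$, exactly as the statement requires. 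In effect you have supplied the proof the paper outsources, which makes the presentation more self-contained than the original.
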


Define an alternative error function:
$$\tilde{u}^k:=I_h\hat{u}^k-u_h^k,\quad\forall k\in\mathbb{N}.$$
	Subtracting the numerical scheme \eqref{eq:fem1} from the consistency estimate \eqref{eq:consistency} implies that
	\begin{equation}\label{eq:rough}	 (\overline{\partial}\tilde{u}^k,\varphi)_h=-(\tilde{u}^k\nabla\mathcal{V}_u^k+u_h^k\nabla\tilde{\mu}_u^k,\nabla\varphi)_h+\langle\mathcal{R}^k,\varphi\rangle,
	\end{equation}
	where
	\begin{equation*}
	\mathcal{V}_u^k:=I_h\log \hat{u}^{k+1}-\chi A_h\hat{u}^k,
	\end{equation*}
	\begin{equation*}
	\tilde{\mu}_u^k:=I_h\log \hat{u}^{k+1}-I_h\log u_h^{k+1}-\chi A_h\tilde{u}^k.
	\end{equation*}
	Since $\mathcal{V}_u^k$ only depends on the exact solution, we can assume
	\begin{equation}\label{eq:v}
	\|\mathcal{V}_u^k\|_{W^{2,\infty}}\le C.
	\end{equation}
	\begin{lemma}\label{w1pbound}
		The numerical solutions of the scheme \eqref{eq:fem1}-\eqref{eq:fem2} have the strict separation property and a uniform $W^{1,\infty}$ bound:
		\begin{align*}
		u_h^{k}\ge\dfrac{\epsilon_0}{4},\ \|u_h^{k}\|_{W^{1,\infty}}\le C^*,
		\end{align*}
		for all $0\le k\le T/\tau$, where $\epsilon_0$ and $C^*$ are positive constants.
	\end{lemma}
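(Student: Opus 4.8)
The plan is to estimate the auxiliary error $\tilde u^k=I_h\hat u^k-u_h^k$ in the $L_h^2$ norm with a rate high enough that the inverse estimate upgrades it to a \emph{vanishing} $W^{1,\infty}$ bound; the two claimed bounds for $u_h^k$ then follow by writing $u_h^k=I_h\hat u^k-\tilde u^k$ and invoking \eqref{eq:u} and \eqref{eq:w1,infty} (note that the piecewise-linear interpolant of $\hat u^k\ge\epsilon_0/2$ is itself $\ge\epsilon_0/2$). I would argue by induction on $k$ together with a bootstrap a priori assumption. Suppose the claimed bounds hold for all indices up to $k$. Because the logarithmic term in \eqref{eq:rough} involves $u_h^{k+1}$, I first posit an a priori assumption such as $\|\tilde u^{k+1}\|_{L_h^2}\le h^{9/4}$, which by the inverse estimate forces $\|\tilde u^{k+1}\|_{L^\infty}\le Ch^{9/4-1}\to 0$ and hence the quantitative lower bound $u_h^{k+1}\ge \epsilon_0/4$ and $\|u_h^{k+1}\|_{W^{1,\infty}}\le C$ at level $k+1$; positivity from the scheme alone would not suffice, since I need to control $1/u_h^{k+1}$ uniformly.

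For the energy estimate I would take $\varphi=\tilde u^{k+1}$ in \eqref{eq:rough}. The time-difference term produces $\tfrac{1}{2\tau}\big(\|\tilde u^{k+1}\|_{L_h^2}^2-\|\tilde u^{k}\|_{L_h^2}^2+\|\tilde u^{k+1}-\tilde u^k\|_{L_h^2}^2\big)$. On the right-hand side, the term carrying $\mathcal V_u^k$ is controlled by $\|\mathcal V_u^k\|_{W^{1,\infty}}$ via \eqref{eq:v} and Young's inequality as $\epsilon\|\nabla\tilde u^{k+1}\|^2+C\|\tilde u^k\|^2$; the chemotaxis contribution $-\chi(u_h^k\nabla A_h\tilde u^k,\nabla\tilde u^{k+1})_h$ is handled with the smoothing bound $\|\nabla A_h\tilde u^k\|\le C\|\tilde u^k\|$ of the discrete elliptic solver together with the $W^{1,\infty}$ bound on $u_h^k$; and the residual is bounded by \eqref{eq:r} as $Ch^4\|\tilde u^{k+1}\|_{H^1}$, again split by Young. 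The discrepancies between $(\cdot,\cdot)_h$ and $(\cdot,\cdot)$ are absorbed through Lemma \ref{lem:qua}, each carrying a spare factor $h^2$, while \eqref{norm} lets me identify the discrete and continuous gradient norms.

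The crux is the diffusive term $-(u_h^k\nabla(I_h\log\hat u^{k+1}-I_h\log u_h^{k+1}),\nabla\tilde u^{k+1})_h$, from which I expect to extract the coercivity driving the whole estimate. Using the nodal mean-value identity $\log\hat u^{k+1}-\log u_h^{k+1}=\Lambda^{k+1}(\hat u^{k+1}-u_h^{k+1})$ with $\Lambda^{k+1}\in[1/C,4/\epsilon_0]$ guaranteed by the separation of $\hat u^{k+1}$ and the \emph{assumed} separation of $u_h^{k+1}$, the leading part becomes $(u_h^k\Lambda^{k+1}\nabla\tilde u^{k+1},\nabla\tilde u^{k+1})_h\ge c\|\nabla\tilde u^{k+1}\|^2$ since $u_h^k\ge\epsilon_0/4$, while the part in which the gradient falls on $\Lambda^{k+1}$ is lower order and absorbed via the $W^{1,\infty}$ bounds on $\hat u^{k+1}$ and $u_h^{k+1}$. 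The genuinely delicate point, and the \emph{main obstacle}, is controlling the commutator between the Lagrange interpolation $I_h$ and the nonlinearity, i.e.\ $\nabla\big(I_h(\Lambda^{k+1}\tilde u^{k+1})-\Lambda^{k+1}\tilde u^{k+1}\big)$, for which I would combine the interpolation bound \eqref{eq:I_h}, the inverse estimate, and the already-established $W^{1,\infty}$ bounds; this is exactly where the strict separation is indispensable, as it keeps $\log$ and its derivatives uniformly Lipschitz.

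Collecting these bounds and choosing $\epsilon$ small to absorb the $\|\nabla\tilde u^{k+1}\|^2$ terms into the coercive one, I arrive at $\|\tilde u^{k+1}\|_{L_h^2}^2-\|\tilde u^{k}\|_{L_h^2}^2+c\tau\|\nabla\tilde u^{k+1}\|^2\le C\tau(\|\tilde u^{k}\|^2+\|\tilde u^{k+1}\|^2)+C\tau h^8$. Summing in $k$, using that $\tilde u^0=0$ because the correction functions are constructed with vanishing initial data, and applying the discrete Gronwall inequality (Lemma \ref{lem:gronwall}), I obtain $\|\tilde u^{k+1}\|_{L_h^2}\le Ch^4$. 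Since $Ch^4\ll h^{9/4}$ for $h$ small, this strictly improves the a priori assumption and closes the bootstrap. Finally the inverse estimate gives $\|\tilde u^{k+1}\|_{W^{1,\infty}}\le Ch^{-2}\|\tilde u^{k+1}\|_{L_h^2}\le Ch^2\to0$, whence $u_h^{k+1}=I_h\hat u^{k+1}-\tilde u^{k+1}\ge \epsilon_0/2-Ch^2\ge \epsilon_0/4$ and $\|u_h^{k+1}\|_{W^{1,\infty}}\le C+Ch^2\le C^*$, completing the induction.
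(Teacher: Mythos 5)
There is a genuine gap, and it is precisely at the step you flag as the heart of your argument: the bootstrap at the new time level. Your entire chain of estimates --- the coercivity of the diffusive term via the mean-value factor $\Lambda^{k+1}$, the control of the term where the gradient falls on $\Lambda^{k+1}$, and the commutator with $I_h$ --- requires the quantitative separation $u_h^{k+1}\ge\epsilon_0/4$ and the $W^{1,\infty}$ bound \emph{at level $k+1$}, which you obtain only from the posited assumption $\|\tilde u^{k+1}\|_{L_h^2}\le h^{9/4}$. What you then prove is the implication ``if $\|\tilde u^{k+1}\|_{L_h^2}\le h^{9/4}$ then $\|\tilde u^{k+1}\|_{L_h^2}\le Ch^4$,'' and you declare that the improvement ``closes the bootstrap.'' In the discrete setting this is circular: $u_h^{k+1}$ is a single fixed function produced by the scheme, there is no continuum of times on which a continuity or open-closed argument can run, so nothing rules out the alternative $\|\tilde u^{k+1}\|_{L_h^2}> h^{9/4}$, in which case your implication is vacuous. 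A self-improving estimate only yields an unconditional bound when paired with a continuation mechanism (as for PDEs in continuous time) or with an independent, unconditional rough bound; you supply neither.

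The paper's proof is built exactly to avoid this circularity, and this is where it departs from your route. Under only the level-$k$ hypothesis \eqref{eq:tilde u}, it first tests the error equation \eqref{eq:rough} with $\varphi=\tau\tilde\mu_u^k$; in the resulting inequality the logarithmic difference is expanded around $\hat u^{k+1}$ (whose separation \eqref{eq:u} is known a priori), so no pointwise information about $u_h^{k+1}$ is needed, and one obtains $\tau\|\nabla\tilde\mu_u^k\|_{L_h^2}\le Ch^{15/4}$, i.e.\ \eqref{eq:mu}. It then tests with $\varphi=\tilde u^{k+1}-\tilde u^k$ to get the one-step bound \eqref{eq:error2}, which combined with \eqref{eq:mu}, the inverse inequality and $\tau\le Ch$ gives the \emph{unconditional} rough bound $\|\tilde u^{k+1}\|_{L_h^2}\le Ch^{11/4}$. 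Only after the inverse estimate converts this into $u_h^{k+1}\ge \epsilon_0/4$ and $\|u_h^{k+1}\|_{W^{1,\infty}}\le C^*$ does the paper run, in Section \ref{recovery}, the refined estimate with $\varphi=\tilde u^{k+1}$, mean-value factor $\xi$, Gronwall (Lemma \ref{lem:gronwall}) and $\tilde u^0=0$, recovering $\|\tilde u^{k+1}\|_{L_h^2}\le Ch^4\le Ch^{15/4}$ and closing the induction on \eqref{eq:tilde u}. Your refined estimate is essentially identical to that recovery step (same test function, same splitting of the log term, same use of \eqref{eq:v}, \eqref{eq:r} and the discrete elliptic smoothing for $A_h$), so the repair is concrete: insert the two rough tests $\varphi=\tau\tilde\mu_u^k$ and $\varphi=\tilde u^{k+1}-\tilde u^k$ before your energy estimate, and the rest of your argument goes through.
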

	\begin{proof}
		We shall first make the following assumption at the previous time step:
		\begin{equation}\label{eq:tilde u}
		\|\tilde{u}^k\|_{L_h^2}\le Ch^{15/4}.
		\end{equation}
		Then, we will demonstrate that such an assumption will be recovered at the next time step in Section \ref{recovery}.
		
		Using the inverse inequality, we obtain a $W^{1,\infty}$ bound for the numerical error function:
		\begin{equation}\label{eq:11/4-1}
		\|\tilde{u}^k\|_{\infty}\le\dfrac{C\|\tilde{u}^k\|_{L_h^2}}{h}\le\dfrac{Ch^{15/4}}{h}\le Ch^{11/4}\le 1,
		\end{equation}
		\begin{equation*}
		\|\nabla\tilde{u}^k\|_{\infty}\le\dfrac{C\|\tilde{u}^k\|_{\infty}}{h}\le\dfrac{Ch^{11/4}}{h}\le Ch^{7/4}\le 1.
		\end{equation*}
		A combination of the above with \eqref{eq:w1,infty}, we get a $W^{1,\infty}$ bound for $u_h^k$ at the previous time step:
		\begin{equation*}
		\|u_h^k\|_{\infty}\le\|\hat{u}^k\|_{\infty}+\|\tilde{u}^k\|_{\infty}\le C+1\le C^*,
		\end{equation*}
		\begin{equation*}
		\|\nabla u_h^k\|_{\infty}\le\|\nabla\hat{u}^k\|_{\infty}+\|\nabla\tilde{u}^k\|_{\infty}\le C+1\le C^*.
		\end{equation*}
		Because of \eqref{eq:11/4-1}, taking  $h$ sufficiently small, we have
		\begin{equation*}
		\|\tilde{u}^k\|_{\infty}\le Ch^{11/4}\le \dfrac{\epsilon_0}{4}.
		\end{equation*}
		Then the strict separation property is valid for $u_h^k$:
		\begin{equation}\label{eq:u_h^k-separation}
		u_h^k\ge I_h\hat{u}^k-\|\tilde{u}^k\|_{\infty}\ge\dfrac{\epsilon_0}{4}.
		\end{equation}
		
		Taking $\varphi=\tau\tilde{\mu}_u^k$ in \eqref{eq:rough} leads to
		\begin{equation}\label{eq:error1}
		(\tilde{u}^{k+1},\tilde{\mu}_u^k)_h+\tau(u_h^k\nabla\tilde{\mu}_u^k,\nabla\tilde{\mu}_u^k)_h
		=(\tilde{u}^k,\tilde{\mu}_u^k)_h-\tau(\tilde{u}^k\nabla\mathcal{V}_u^k,\nabla\tilde{\mu}_u^k)_h+\tau\langle\mathcal{R}^k,\tilde{\mu}_u^k\rangle.
		\end{equation}
		Now we deal with the left hand side of \eqref{eq:error1}. To proceed the first term on the left hand side of \eqref{eq:error1}, notice that $((\tilde{u}^{k+1})^3,1)_h=0$ since $(\tilde{u}^{k+1},1)_h=(I_h\hat{u}^{0}-u_h^0,1)_h=0$ stated in Remark \ref{remak:massconserve}, using the H\"older inequality, we have
		\begin{equation}\label{eq:left1}
		\begin{aligned}
		&(\tilde{u}^{k+1},\tilde{\mu}_u^k)_h=(\tilde{u}^{k+1},I_h\log \hat{u}^{k+1}-I_h\log u_h^{k+1})_h+(\tilde{u}^{k+1},-\chi A_h\tilde{u}^k)_h\\
		\ge& \Big(\tilde{u}^{k+1},\dfrac{1}{\hat{u}^{k+1}}\tilde{u}^{k+1}\Big)_h+\frac{1}{2\|\zeta\|_{\infty}^2}\Big((\tilde{u}^{k+1})^3,1\Big)_h-C'\|\tilde{u}^{k+1}\|_{L_h^2}\|\tilde{u}^k\|_{L_h^2}\\
		\ge&\dfrac{1}{C}\|\tilde{u}^{k+1}\|_{L_h^2}^2-\dfrac{1}{C}\|\tilde{u}^{k+1}\|_{L_h^2}^2-C''\|\tilde{u}^k\|_{L_h^2}^2\\
		=&-C''\|\tilde{u}^k\|_{L_h^2}^2,
		\end{aligned}
		\end{equation}
		where $\zeta$ lies between $\hat{u}^{k+1}$ and $u_h^{k+1}$,  and \eqref{eq:w1,infty} has been utilized in the second inequality. As for the second term on the left hand side of \eqref{eq:error1}, using the strict separation property of the numerical solution \eqref{eq:u_h^k-separation}, we have
		\begin{equation}\label{eq:left2}
		(u_h^k\nabla\tilde{\mu}_u^k,\nabla\tilde{\mu}_u^k)_h\ge \dfrac{\epsilon_0}{4}\|\nabla\tilde{\mu}_u^k\|_{L_h^2}^2.
		\end{equation}
		Next, we deal with the right hand side of \eqref{eq:error1}. We apply the H\"older inequality and the Young inequality:
		\begin{equation}\label{eq:right1}
		\begin{aligned}
		|(\tilde{u}^k,\tilde{\mu}_u^k)_h|&\le\|\tilde{u}^k\|_{-1}\|\nabla \tilde{\mu}_u^k\|_{L_h^2}
		\le C\|\tilde{u}^k\|_{L_h^2}\|\nabla\tilde{\mu}_u^k\|_{L_h^2}\\
		&\le \dfrac{6C}{\epsilon_0\tau}\|\tilde{u}^k\|_{L_h^2}^2
		+\dfrac{\epsilon_0\tau}{24}\|\nabla\tilde{\mu}_u^k\|_{L_h^2}^2.
		\end{aligned}
		\end{equation}
		An application of the Cauchy-Schwarz inequality and \eqref{eq:v} leads to
		\begin{equation*}
		\begin{aligned}\label{eq:right2}
		|-(\tilde{u}^k\nabla\mathcal{V}_u^k,\nabla\tilde{\mu}_u^k)_h|&
		\le\|\tilde{u}^k\nabla\mathcal{V}_u^k\|_{L_h^2}\|\nabla\tilde{\mu}_u^k\|_{L_h^2}
		\le\|\nabla\mathcal{V}_u^k\|_{\infty}\|\tilde{u}^k\|_{L_h^2}\|\nabla\tilde{\mu}_u^k\|_{L_h^2}\\
		&\le\dfrac{6C}{\epsilon_0}\|\tilde{u}^k\|_{L_h^2}^2+\dfrac{\epsilon_0}{24}\|\nabla\tilde{\mu}_u^k\|_{L_h^2}^2.
		\end{aligned}
		\end{equation*}
		Using the inequality \eqref{eq:r}, we have
		\begin{equation}\label{eq:right3}
		|\langle\mathcal{R}^k,\tilde{\mu}_u^k\rangle|\le Ch^4\|\nabla\tilde{\mu}_u^k\|_2
		\le\dfrac{6C}{\epsilon_0}h^8+\dfrac{\epsilon_0}{24}\|\nabla\tilde{\mu}_u^k\|_{L_h^2}^2.
		\end{equation}
		Substitution of \eqref{eq:left1}-\eqref{eq:right3} into \eqref{eq:error1} leads to
		\begin{equation*}
		\dfrac{\epsilon_0}{4}\tau\|\nabla\tilde{\mu}_u^k\|_{L_h^2}^2-C''\|\tilde{u}^k\|_{L_h^2}^2\le\dfrac{6C}{\epsilon_0\tau}\|\tilde{u}^k\|_{L_h^2}^2 +\dfrac{6C\tau}{\epsilon_0}\|\tilde{u}^k\|_{L_h^2}^2+\dfrac{6C}{\epsilon_0}h^8\tau+\dfrac{\epsilon_0\tau}{8}\|\nabla\tilde{\mu}_u^k\|_{L_h^2}^2.
		\end{equation*}
		Then we have the following estimate by applying $\tau\le Ch$
		\begin{equation}\label{eq:mu}
		\tau\|\nabla\tilde{\mu}_u^k\|_{L_h^2}\le Ch^{15/4}.
		\end{equation}
		
		Again, taking $\varphi=\tilde{u}^{k+1}-\tilde{u}^k$ in the error equation \eqref{eq:rough} leads to
		\begin{equation}\label{eq:error2}
		\|\tilde{u}^{k+1}-\tilde{u}^{k}\|_{L_h^2}\le\tau\left(\|\nabla\cdot(\tilde{u}^k\nabla\mathcal{V}_u^k)\|_{L_h^2}
		+\|\nabla\cdot(u_h^k\nabla\tilde{\mu}_u^k)\|_{L_h^2}+\dfrac{1}{h}\|\mathcal{R}^k\|_{H^{-1}}\right).
		\end{equation}
		Now we estimate the first term on the right hand side of \eqref{eq:error2}. Using the Young inequality, we have
		\begin{equation}\label{eq:r1}
		\begin{aligned}
		\tau\|\nabla\cdot(\tilde{u}^k\nabla\mathcal{V}_u^k)\|_{L_h^2}&\le\tau(\|\tilde{u}^k\Delta\mathcal{V}_u^k\|_{L_h^2}
		+\|\nabla\tilde{u}^k\nabla\mathcal{V}_u^k\|_{L_h^2})\\
		&\le\tau(\|\Delta\mathcal{V}_u^k\|_{\infty}\|\tilde{u}^k\|_{L_h^2}+\|\nabla\mathcal{V}_u^k\|_{\infty}\|\nabla\tilde{u}^k\|_{L_h^2})\\
		&\le C\tau(\|\tilde{u}^k\|_{L_h^2}+\|\nabla\tilde{u}^k\|_{L_h^2})\\
		&\le Ch^{15/4},
		\end{aligned}
		\end{equation}
		where \eqref{eq:tilde u} and $\tau\le Ch$ have been used in the last inequality. For the second term on the right hand side of \eqref{eq:error2}, we have
		\begin{equation}\label{eq:r2}
		\begin{aligned}
		\tau\|\nabla\cdot(u_h^k\nabla\tilde{\mu}_u^k)\|_{L_h^2}&\le \tau(\|\nabla u_h^k\nabla\tilde{\mu}_u^k\|_{L_h^2}+\|u_h^k\Delta\tilde{\mu}_u^k\|_{L_h^2})\\
		&\le\tau( \|\nabla u_h^k\|_{\infty}\|\nabla\tilde{\mu}_u^k\|_{L_h^2}+\|u_h^k\|_{\infty}\|\Delta\tilde{\mu}_u^k\|_{L_h^2})\\
		&\le C^*\tau(\|\nabla\tilde{\mu}_u^k\|_{L_h^2}+\|\Delta\tilde{\mu}_u^k\|_{L_h^2})\\
		&\le Ch^{11/4},
		\end{aligned}
		\end{equation}
		where \eqref{eq:mu} and the inverse inequality have been used in the last inequality. Substitution of \eqref{eq:r1}-\eqref{eq:r2} and \eqref{eq:r} into \eqref{eq:error2} leads to
		\begin{equation*}
		\|\tilde{u}^{k+1}-\tilde{u}^{k}\|_{L_h^2}\le Ch^{11/4}.
		\end{equation*}
		Then, we can obtain a rough estimate for $\tilde{u}^{k+1}$:
		\begin{equation*}
		\|\tilde{u}^{k+1}\|_{L_h^2}\le\|\tilde{u}^{k+1}-\tilde{u}^{k}\|_{L_h^2}+\|\tilde{u}^{k}\|_{L_h^2}\le Ch^{11/4}.
		\end{equation*}
		An application of the inverse inequality  implies that
		\begin{align*}
		\|\tilde{u}^{k+1}\|_{\infty}\le\dfrac{C\|\tilde{u}^{k+1}\|_{L_h^2}}{h}\le Ch^{7/4}\le 1,\\
		\|\nabla\tilde{u}^{k+1}\|_{\infty}\le\dfrac{C\|\tilde{u}^{k+1}\|_{\infty}}{h}\le Ch^{3/4}\le 1.
		\end{align*}
		We take $h$ sufficiently small such that
		\begin{equation*}
		\|\tilde{u}^{k+1}\|_{\infty}\le Ch^{7/4}\le \dfrac{\epsilon_0}{4}.
		\end{equation*}
		A combination of above with \eqref{eq:u} leads to the strict separation property:
		\begin{equation*}
		u_h^{k+1}\ge I_h\hat{u}^{k+1}-\|\tilde{u}^{k+1}\|_{\infty}\ge\dfrac{\epsilon_0}{4}.
		\end{equation*}
		In addition, we can obtain the following $W^{1,\infty}$ bound for the numerical solution at time step $t^{k+1}$:
		\begin{align*}
		\|u_h^{k+1}\|_{\infty}\le\|\hat{u}^{k+1}\|_{\infty}+\|\tilde{u}^{k+1}\|_{\infty}\le C+1\le C^*,\\
		\|\nabla u_h^{k+1}\|_{\infty}\le\|\nabla\hat{u}^{k+1}\|_{\infty}+\|\nabla\tilde{u}^{k+1}\|_{\infty}\le C+1\le C^*,
		\end{align*}
		which completes the proof.
	\end{proof}
	
	\subsection{Recovery of the assumption \eqref{eq:tilde u}}\label{recovery} In this subsection, the assumption will be recovered at the next time step.
	
	Taking $\varphi=\tilde{u}^{k+1}$ in \eqref{eq:rough}, we arrive at
	\begin{align*}
		\dfrac{1}{2\tau}\left(\|\tilde{u}^{k+1}\|_{L_h^2}^2-\|\tilde{u}^k\|_{L_h^2}^2\right)+(u_h^k\nabla\tilde{\mu}_u^k,\nabla\tilde{u}^{k+1})_h\le-(\tilde{u}^k\nabla\mathcal{V}_u^k,\nabla\tilde{u}^{k+1})_h+\left\langle R^k,\tilde{u}^{k+1}\right\rangle.
	\end{align*}
	The second term on the left hand side of above inequality can be rewritten as
		\begin{align*}
	&(u_h^k\nabla\tilde{\mu}_u^k,\nabla\tilde{u}^{k+1})_h=(u_h^k(\nabla(I_h\log\hat{u}^{k+1}-I_h\log u_h^{k+1})-\chi \nabla A_h\tilde{u}^k),\nabla\tilde{u}^{k+1})_h\\
	=&\Big(u_h^k\nabla I_h\Big(\dfrac{\hat{u}^{k+1}-u_h^{k+1}}{\xi}\Big),\nabla\tilde{u}^{k+1}\Big)_h-(\chi u_h^k\nabla  A_h\tilde{u}^k,\nabla\tilde{u}^{k+1})_h\\
	=& \Big(\dfrac{u_h^k}{I_h\xi}\nabla\tilde{u}^{k+1},\nabla\tilde{u}^{k+1}\Big)_h-\Big(\dfrac{u_h^k\nabla I_h\xi}{(I_h\xi)^2}\tilde{u}^{k+1},\nabla\tilde{u}^{k+1}\Big)_h-(\chi u_h^k\nabla  A_h\tilde{u}^k,\nabla\tilde{u}^{k+1})_h,
	\end{align*} 
	where $\xi$ lies between $\hat{u}^{k+1}$ and $u_h^{k+1}$. 
	 Utilizing the strict separation property and the $W^{1,\infty}$ bound for the numerical solution, we obtain the following estimate
	\begin{align*}
		&(u_h^k\nabla\tilde{\mu}_u^k,\nabla\tilde{u}^{k+1})_h\\
		\ge& \dfrac{\epsilon_0}{4C^*}\|\nabla\tilde{u}^{k+1}\|_2^2-\dfrac{16(C^*)^2}{\epsilon_0^2}|(\tilde{u}^{k+1},\nabla\tilde{u}^{k+1})_h|-(\chi  u_h^k \nabla A_h\tilde{u}^k,\nabla\tilde{u}^{k+1})_h
		\\
		\ge& \dfrac{\epsilon_0}{4C^*}\|\nabla\tilde{u}^{k+1}\|_{L_h^2}^2-\dfrac{\epsilon_0}{8C^*}\|\nabla \tilde{u}^{k+1}\|_{L_h^2}^2-C\|\tilde{u}^{k+1}\|_{L_h^2}^2-(\chi  u_h^k \nabla A_h\tilde{u}^k,\nabla\tilde{u}^{k+1})_h
		\\
		=& \dfrac{\epsilon_0}{8C^*}\|\nabla\tilde{u}^{k+1}\|_{L_h^2}^2-C\|\tilde{u}^{k+1}\|_{L_h^2}^2-(\chi  u_h^k \nabla A_h\tilde{u}^k,\nabla\tilde{u}^{k+1})_h,
	\end{align*}
	 where $C$ is a positive constant satisfying $\sqrt{C\epsilon_0/8C^*}\ge 8(C^*/\epsilon_0)^2$. 
	Combining above inequalities leads to
\begin{equation}\label{eq:refine,right}
		\begin{aligned}
	&\dfrac{1}{2\tau}\left(\|\tilde{u}^{k+1}\|_{L_h^2}^2-\|\tilde{u}^k\|_{L_h^2}^2\right)+\dfrac{\epsilon_0}{8C^*}\|\nabla\tilde{u}^{k+1}\|_{L_h^2}^2\\
	\le&\ C\|\tilde{u}^{k+1}\|_{L_h^2}^2+(\chi  u_h^k\nabla A_h\tilde{u}^k,\nabla\tilde{u}^{k+1})_h-(\tilde{u}^k\nabla\mathcal{V}_u^k,\nabla\tilde{u}^{k+1})_h+\left\langle R^k,\tilde{u}^{k+1}\right\rangle.
	\end{aligned}
\end{equation}
	Using Cauchy-Schwarz inequality and the strict separation property of the numerical solution, we have
	\begin{equation*}
	\begin{aligned}
	&	|(\chi  u_h^k \nabla A_h\tilde{u}^k,\nabla\tilde{u}^{k+1})_h|
		\le\chi\| u_h^k\|_{\infty}\|\nabla A_h\tilde{u}^k\|_{L_h^2}\|\nabla\tilde{u}^{k+1}\|_{L_h^2}
		\\\le\ & C\|\tilde{u}^k\|_{L_h^2}\|\nabla\tilde{u}^{k+1}\|_{L_h^2}
		\le \dfrac{\epsilon_0}{32C^*}\|\nabla\tilde{u}^{k+1}\|_{L_h^2}^2+C\|\tilde{u}^k\|_{L_h^2}^2.
	\end{aligned}
	\end{equation*}
	An application of \eqref{eq:v} shows that
	\begin{align*}
		|-(\tilde{u}^k\nabla\mathcal{V}_u^k,\nabla\tilde{u}^{k+1})_h|\le C\|\tilde{u}^k\|_{L_h^2}\|\nabla\tilde{u}^{k+1}\|_{L_h^2}\le \dfrac{\epsilon_0}{32C^*}\|\nabla\tilde{u}^{k+1}\|_{L_h^2}^2+C\|\tilde{u}^k\|_{L_h^2}^2.
	\end{align*}
	The last term on the right hand side of \eqref{eq:refine,right} can be estimated as follows
	\begin{align*}
		\Big|\left\langle R^k,\tilde{u}^{k+1}\right\rangle\Big|\le Ch^4\|\nabla\tilde{u}^{k+1}\|_2\le Ch^8+\dfrac{\epsilon_0}{32C^*}\|\nabla\tilde{u}^{k+1}\|_{L_h^2}^2.
	\end{align*}
	Substitution of above into \eqref{eq:refine,right} leads to
	\begin{align*}
		\dfrac{1}{2\tau}\left(\|\tilde{u}^{k+1}\|_{L_h^2}^2-\|\tilde{u}^k\|_{L_h^2}^2\right)+\dfrac{\epsilon_0}{32C^*}\|\nabla\tilde{u}^{k+1}\|_{L_h^2}^2\le Ch^8+C(\|\tilde{u}^k\|_{L_h^2}^2+\|\tilde{u}^{k+1}\|_{L_h^2}^2),
	\end{align*}
	multiplying $2\tau$ on both sides and summing this inequality from 0 to $k$ leads to
	\begin{align*}
		\|\tilde{u}^{k+1}\|_{L_h^2}^2+\sum_{i=0}^{k}\dfrac{\epsilon_0\tau}{16C^*}\|\nabla\tilde{u}^{i+1}\|_{L_h^2}^2\le Ch^8+\sum_{i=0}^{k+1}4C\tau\|\tilde{u}^i\|_{L_h^2}^2.
	\end{align*}
	Choosing $\tau$ sufficiently small such that $1-4C\tau>1/2$, using the Gronwall inequality (Lemma \ref{lem:gronwall}), we derive that
	\begin{align*}
		\|\tilde{u}^{k+1}\|_{L_h^2}^2+\sum_{i=0}^{k}\dfrac{\epsilon_0\tau}{8C^*}\|\nabla\tilde{u}^{i+1}\|_{L_h^2}^2\le C\exp\Big(\sum_{i=0}^{k}8C\tau\Big)h^8,
	\end{align*}
	then we obtain $\|\tilde{u}^{k+1}\|_{L_h^2}\le Ch^4\le Ch^{15/4}$, the assumption \eqref{eq:tilde u} is recovered at the next time step.
	
	\subsection{Proof of Theorem \ref{th.err}} In this subsection,
	we shall make use of  the strict separation property and the uniform $W^{1,\infty}$ bound for the numerical solution derived in the above  to prove Theorem \ref{th.err}.

	A weak formulation of \eqref{1.1}-\eqref{1} is
	\begin{align}
	(u_t,\varphi)+(\nabla u,\nabla\varphi) - \chi(u\nabla v,\nabla\varphi)=0, \ \forall\varphi\in H_0^1(\Omega), \label{eq:weak01}\\
	(\nabla v,\nabla\psi)+(v,\psi)=\alpha( u,\psi),\ \forall\psi\in H_0^1(\Omega).\label{eq:weak02}
	\end{align}
	Substituting $I_hu(t)$ into \eqref{eq:weak01} at $t=t_{k+1}$, we have
	\begin{equation}\label{eq:weak11}
	\begin{aligned}
	&\left(\overline{\partial}I_hu^k,\ \varphi\right)_h+\left(\nabla I_hu^{k+1},\ \nabla\varphi\right)_h-\chi\left(I_hu^k\nabla v^k,\ \nabla\varphi\right)_h\\
	=&\left(\overline{\partial}u^k-u_t^{k+1},\ \varphi\right)+\left(\overline{\partial}I_hu^k-\overline{\partial}u^k,\ \varphi\right)+\left(\nabla(I_hu^{k+1}-u^{k+1}),\ \varphi\right)\\
	&+\chi\left((u^k-I_hu^k)\nabla v^k,\ \varphi\right)+\chi\left(u^{k+1}\nabla v^{k+1}-u^k\nabla v^k,\ \varphi\right)\\
	&+\left(\overline{\partial}I_h u^k,\varphi\right)_h-\left(\overline{\partial}I_h u^k,\varphi\right)
	+\left(\nabla I_hu^{k+1},\nabla \varphi\right)_h-\left(\nabla  I_hu^{k+1},\nabla \varphi\right) \\
	&+\chi\left(I_hu^k\nabla v^k,\nabla\varphi\right)-\chi\left(I_hu^k\nabla v^k,\nabla\varphi\right)_h\\
	:=&I_1+I_2+I_3+I_4+I_5+I_6+I_7+I_8:=R_1,
	\end{aligned}
	\end{equation}
	where $R_1$ represents the truncation error. Similarly, substituting $I_hu(t),I_hv(t)$ into \eqref{eq:weak02} at $t=t_k$ leads to
	\begin{align}\label{eq:weak22}
	(\nabla I_h v^k,\ \nabla\psi)+(\I_h v^k,\ \psi)=&\alpha(I_h u^k,\ \psi)+(\nabla(I_hv(t_k)-v^k),\ \psi)\\
	&+(I_h v^k-v^k,\ \psi)+\alpha(u^k-I_h u^k,\ \psi).\nonumber
	\end{align}
	We rewrite the numerical scheme \eqref{eq:fem1} as
	\begin{equation}\label{eq:fem11}
	\begin{aligned}
	&\left(\overline{\partial} u_h^k,\varphi\right)_h +\left(\nabla u_h^{k+1},\nabla \varphi\right)_h-\chi\left(u_h^k\nabla v_h^k,\nabla \varphi\right)_h\\
	=&\left(u_h^k\nabla(I-I_h)\log u_h^{k+1},\nabla\varphi\right)_h+\left((u_h^{k+1}-u_h^k)\nabla\log u_h^{k+1},\nabla\varphi\right)_h\\
	:=&I_9+I_{10}:=R_2.
	\end{aligned}
	\end{equation}

	We split the error functions as
	\begin{equation*}
	\begin{aligned}
	e_u^k&=u^k-u_h^k= (u^k-I_hu^k)+(I_hu^k-u_h^k):=\rho_u^k+\sigma_u^k,\\
	e_v^k&=v^k-v_h^k=(v^k-I_hv^k)+(I_hv^k-v_h^k):=\rho_v^k+\sigma_v^k.
	\end{aligned}
	\end{equation*}
	Then using the property of the interpolation \eqref{eq:I_h}, we have
	\begin{equation}\label{interpolation}
	\begin{aligned}
	\|\rho_u^k\|_{L^2(\Omega)}+h\|\nabla \rho_u^k\|_{L^2(\Omega)}\le Ch^2\|u\|_{H^2(\Omega)},\\
	\|\rho_v^k\|_{L^2(\Omega)}+h\|\nabla \rho_v^k\|_{L^2(\Omega)}\le Ch^2\|v\|_{H^2(\Omega)}.
	\end{aligned}
	\end{equation}
	Subtracting the numerical scheme formulation \eqref{eq:fem11} and \eqref{eq:fem2} from the weak form \eqref{eq:weak11} and \eqref{eq:weak22}, we obtain the following error equations:
	\begin{align}
	\left(\overline{\partial}\sigma_u^k,\ \varphi\right)_h+\left(\nabla\sigma_u^{k+1},\ \nabla\varphi\right)&_h=I_{11}+R_1-R_2,\ \label{eq:error31}\\
	(\nabla\sigma_v^{k},\ \nabla\psi)+(\sigma_v^k,\ \psi)=\alpha&(\sigma_u^k,\ \psi)-\alpha\epsilon_h(u_h^{k},\ \psi)+(\nabla(I_hv-v),\ \psi)\ \label{eq:error32}\\
	&+(I_h v-v,\ \psi)+\alpha(u-I_h u,\ \psi),\nonumber
	\end{align}
	for all $\varphi,\psi\in X_h,k\ge 1$, where $R_1,R_2$ are defined before and $I_{11}$ is defined as follows
	\begin{align*}
	I_{11}=\chi\left(I_hu(t_k)\nabla v^k-u_h^k\nabla v_h^k,\ \nabla\varphi\right)_h.
	\end{align*}
	Taking $\varphi=\sigma_u^{k+1}$ in \eqref{eq:error31} leads to
	\begin{equation}\label{eq:err}
	\begin{aligned}
	\dfrac{1}{2\tau}\left(\|\sigma_u^{k+1}\|_{L_h^2}^2-\|\sigma_u^{k}\|_{L_h^2}^2+\|\sigma_u^{k+1}-\sigma_u^k\|_{L_h^2}^2\right)+\|\nabla \sigma_u^{k+1}\|_{L_h^2}^2=I_{11}+R_1-R_2.
	\end{aligned}
	\end{equation}

	Now we estimate the terms on the right-hand side of \eqref{eq:I11-0}. For the first term $I_{11}$, applying the Cauchy-Schwarz inequality and the Young inequality yields
	\begin{equation}\label{eq:I11-0}
	\begin{aligned}
	&|I_{11}|=|\chi\left(\sigma_u^k\nabla v^k+u_h^k\nabla e_v^k,\ \nabla\sigma_u^{k+1}\right)_h|\\
	\le&\ \|\sigma_u^k\nabla v^k\|_{L_h^2(\Omega)}\|\nabla\sigma_u^{k+1}\|_{L_h^2(\Omega)}+\|u_h^k\nabla e_v^k\|_{L_h^2(\Omega)}\|\nabla\sigma_u^{k+1}\|_{L_h^2(\Omega)}\\
	\le&\ C\|\sigma_u^k\|_{L_h^2(\Omega)}^2+\dfrac{1}{20}\|\nabla\sigma_u^{k+1}\|_{L_h^2(\Omega)}^2+C\|\nabla e_v^k\|_{L_h^2(\Omega)}^2.
	\end{aligned}
	\end{equation}
	In order to estimate $\|\nabla e_v^k\|_{L_h^2(\Omega)}^2$ above, taking $\psi=\sigma_v^k$ in \eqref{eq:error32} and applying Lemma \ref{lem:qua} leads to
	\begin{equation*}
	\begin{aligned}
	\|\nabla\sigma_v^k\|_{L^2(\Omega)}^2+\|\sigma_v^k\|_{L^2(\Omega)}^2\le&\  \alpha\|\sigma_u^k\|_{L^2(\Omega)}\|\sigma_v^k\|_{L^2(\Omega)}+Ch^2\|\nabla u_h^k\|_{L^2(\Omega)}\|\nabla\sigma_v^k\|_{L^2(\Omega)}\\
	&+Ch\|v\|_{H^2(\Omega)}\|\sigma_v^k\|_{L^2(\Omega)}+Ch^2\|u\|_{H^2(\Omega)}\|\sigma_v^k\|_{L^2(\Omega)}\\
	\le&\ C\|\sigma_u^k\|_{L^2(\Omega)}^2+\|\sigma_v^k\|_{L^2(\Omega)}^2+\dfrac{1}{2}\|\nabla\sigma_v^k\|_{L^2(\Omega)}^2+Ch^2,
	\end{aligned}
	\end{equation*}
	where the property of the interpolation has been used in the first inequality. Thus we obtain the following estimate for $\|\nabla \sigma_v^k\|_{L^2(\Omega)}$:
	\begin{align}\label{errorv}
	\|\nabla\sigma_v^k\|_{L^2(\Omega)}^2\le C\|\sigma_u^k\|_{L^2(\Omega)}^2+Ch^2.
	\end{align}
	Applying Lemma  \ref{lem:qua} indicates that
	\begin{align*}
		\|\nabla\sigma_v^k\|_{L_h^2(\Omega)}^2\le C\|\sigma_u^k\|_{L_h^2(\Omega)}^2+Ch^2\|\nabla\sigma_u^k\|_{L^2(\Omega)}^2+Ch^2.
	\end{align*}
	A combination of the above estimates for $\|\nabla\sigma_v^k\|_{L_h^2(\Omega)}$ with \eqref{interpolation} leads to
	\begin{align}\label{errorv2}
		\|\nabla e_v^k\|_{L_h^2(\Omega)}^2\le C\|\sigma_u^k\|_{L_h^2(\Omega)}^2+Ch^2\|\nabla\sigma_u^k\|_{L^2(\Omega)}^2+Ch^2.
	\end{align}
	Substitution of above into \eqref{eq:I11-0} leads to
	\begin{equation}\label{eq:I11}
	\begin{aligned}
	|I_{11}|&\le C\|\sigma_u^k\|_{L_h^2(\Omega)}^2+\dfrac{1}{20}\|\nabla\sigma_u^{k+1}\|_{L_h^2(\Omega)}^2+Ch^2\|\nabla\sigma_u^k\|_{L^2(\Omega)}^2+Ch^2.
	\end{aligned}
	\end{equation}
	
	Next we estimate the second term $R_1=I_1+I_2+I_3+I_4+I_5+I_6+I_7+I_8$.
	For the first term,  we can derive  \cite{2006Thomee}
	\begin{equation}\label{eq:I1}
	\begin{aligned}
	|I_1|&=|\left(\overline{\partial}u^k-u_t^{k+1},\ \sigma_u^{k+1}\right)|
	\le C\tau^{1/2}\Big(\int_{t_k}^{t_{k+1}}\|u_{tt}\|_{L^2(\Omega)}^2\mathrm{d}s\Big)^{1/2}\|\sigma_u^{k+1}\|_{L^2(\Omega)}\\
	&\le C\tau\int_{t_k}^{t_{k+1}}\|u_{tt}\|_{L^2(\Omega)}^2\mathrm{d}s+\dfrac{1}{20}\|\nabla\sigma_u^{k+1}\|_{L^2(\Omega)}^2.
	\end{aligned}
	\end{equation}
	An application of the property of the interpolation and the Young inequality leads to
	\begin{equation}\label{eq:I2}
	\begin{aligned}
	|I_2|=&|\left(\overline{\partial}I_hu^k-\overline{\partial}u^k,\ \sigma_u^{k+1}\right)|\\
	\le&\ \dfrac{1}{\tau}\|I_h(u^{k+1}-u^k)-(u^{k+1}-u^k)\|_{L^2(\Omega)}\|\sigma_u^{k+1}\|_{L^2(\Omega)} \\
	=&\ \dfrac{1}{\tau}\|(I_hu_t-u_t)\tau\|_{L^2(\Omega)}\|\sigma_u^{k+1}\|_{L^2(\Omega)}\\
	\le&\ Ch^2\|u_t\|_{H^2(\Omega)}\|\nabla\sigma_u^{k+1}\|_{L^2(\Omega)}\\
	\le&\ Ch^4+\dfrac{1}{20}\|\nabla\sigma_u^{k+1}\|_{L^2(\Omega)}^2.
	\end{aligned}
	\end{equation}
	Using the Cauchy-Schwarz inequality and the property of the interpolation, we have
	\begin{equation}\label{eq:I3}
	\begin{aligned}
	|I_3|=&|\left(\nabla(I_hu^{k+1}-u^{k+1}),\ \nabla\sigma_u^{k+1}\right)|\\
	\le&\|\nabla(I_hu^{k+1}-u^{k+1})\|_{L^2(\Omega)}\| \nabla\sigma_u^{k+1}\|_{L^2(\Omega)}\\
	\le&Ch\|u^{k+1}\|_{H^2(\Omega)}\| \nabla\sigma_u^{k+1}\|_{L^2(\Omega)}\\
	\le&Ch^2+\dfrac{1}{20}\|\nabla\sigma_u^{k+1}\|_{L^2(\Omega)}^2.
	\end{aligned}
	\end{equation}
	Similarly, we have
	\begin{equation}\label{eq:I4}
	\begin{aligned}
	|I_4|=&|\chi\left((u^k-I_hu^k)\nabla v^k,\ \nabla\sigma_u^{k+1}\right)|\\
	\le&\ \chi \|u^k-I_hu^k\|_{L^2(\Omega)}\|\nabla v^k\|_{L^{\infty}(\Omega)}\| \nabla\sigma_u^{k+1}\|_{L^2(\Omega)}\\
	\le&\ Ch^2\|u^k\|_{H^2(\Omega)}\| \nabla\sigma_u^{k+1}\|_{L^2(\Omega)}\\
	\le&\ Ch^4+\dfrac{1}{20}\| \nabla\sigma_u^{k+1}\|_{L^2(\Omega)}^2.
	\end{aligned}
	\end{equation}
	Now we apply the Cauchy-Schwarz inequality and the Young inequality:
	\begin{equation}\label{eq:I5}
	\begin{aligned}
	|I_5|=&|\chi\left(\tau u_t\nabla v^{k+1}+\tau u^k\nabla v_t,\ \nabla\sigma_u^{k+1}\right)|\\
	\le& \ C\left(\|u_t\|_{L^{\infty}(\Omega)}\|\nabla v\|_{L^{\infty}(\Omega)}+
	\|u\|_{L^{\infty}(\Omega)}\|\nabla v_t\|_{L^{\infty}(\Omega)}\right)\tau\|\nabla \sigma_u^{k+1}\|_{L^2(\Omega)}\\
	\le&\ C\tau^2+\dfrac{1}{20}\|\nabla \sigma_u^{k+1}\|_{L^2(\Omega)}^2.
	\end{aligned}
	\end{equation}
	Notice that
		\begin{align*}
	I_6&=\dfrac{1}{\tau}\left(I_hu^{k+1}-I_hu^k,\ \sigma_u^{k+1}\right)-\dfrac{1}{\tau}\left(I_hu^{k+1}-I_hu^k,\ \sigma_u^{k+1}\right)_h\\
	&=-\epsilon_h\left(I_hu_t,\ \sigma_u^{k+1}\right),
	\end{align*}
	therefore, using  Lemma \ref{lem:qua} leads to the following estimate for $I_6$:
	\begin{align}\label{eq:I6}
	|I_6|=|\epsilon_h(I_hu_t,\sigma_u^{k+1})|\le Ch^2\|\nabla I_hu_t\|\|\nabla\sigma_u^{k+1}\|\le Ch^4+\dfrac{1}{20}\|\nabla\sigma_u^{k+1}\|_{L^2(\Omega)}^2.
	\end{align}
	We recall the quadrature formula defined before and Lemma \ref{lem:qua}, and arrive at
	\begin{align}\label{eq:I7}
	I_7&=\left(I_hu^{k+1},\nabla \sigma_u^{k+1}\right)_h-\left(I_hu^{k+1},\nabla \sigma_u^{k+1}\right)=0,
	\end{align}
	An application of the Cauchy-Schwarz inequality and the property of the interpolation leads to
	\begin{equation}\label{eq:I8}
		\begin{aligned}
		|I_8|&=|\chi\left(I_hu^k\nabla v^k,\nabla\sigma_u^{k+1}\right)-\chi\left(I_hu^k\nabla  v^k,\nabla\sigma_u^{k+1}\right)_h|\\
		&=|\chi\left(I_hu^k\nabla(I-I_h) v^k,\nabla\sigma_u^{k+1}\right)-\chi\left(I_hu^k\nabla (I-I_h) v^k,\nabla\sigma_u^{k+1}\right)_h|\\
		&\le Ch\|I_hu^k\|_{L^{\infty}}\|u^k\|_{L^2(\Omega)}\|\nabla\sigma_u^{k+1}\|_{L^2(\Omega)}\\
		&\le Ch^2+\dfrac{1}{20}\|\nabla\sigma_u^{k+1}\|_{L^2(\Omega)}^2.
		\end{aligned}
	\end{equation}
	Substituting estimates \eqref{eq:I1}-\eqref{eq:I8} into $R_1$ and applying the property $\eqref{norm}$, we obtain
	\begin{equation}\label{eq:R1}
	\begin{aligned}
	|R_1|\le& C\tau\int_{t_k}^{t_{k+1}}\|u_{tt}\|_{L^2(\Omega)}^2\mathrm{d}s+\dfrac{7}{20}\|\nabla\sigma_u^{k+1}\|_{L^2(\Omega)}^2+Ch^2+C\tau^2\\
	=&C\tau\int_{t_k}^{t_{k+1}}\|u_{tt}\|_{L^2(\Omega)}^2\mathrm{d}s+\dfrac{7}{20}\|\nabla\sigma_u^{k+1}\|_{L_h^2(\Omega)}^2+Ch^2+C\tau^2.
	\end{aligned}
	\end{equation}

	It remains to bound each term of $R_2=I_9+I_{10}$. Now we use the Cauchy-Schwarz inequality and the Young inequality:
	\begin{equation}\label{eq:I9}
	\begin{aligned}
	|I_9|=&|\left(u_h^k\nabla(I-I_h)\log u_h^{k+1},\nabla\sigma_u^{k+1}\right)_h|\\
	\le&\ \|u_h^k\nabla(I-I_h)\log u_h^{k+1}\|_{L_h^2(\Omega)}\|\nabla\sigma_u^{k+1}\|_{L_h^2(\Omega)}\\
	\le&\ \|u_h^k\|_{L^{\infty}(\Omega)}\|\nabla(I-I_h)\log u_h^{k+1}\|_{L_h^2(\Omega)}\|\nabla\sigma_u^{k+1}\|_{L_h^2(\Omega)}\\
	\le&\ Ch^2+\dfrac{1}{20}\|\nabla\sigma_u^{k+1}\|_{L_h^2(\Omega)},
	\end{aligned}
	\end{equation}
	where we have used the following inequality:
	\begin{align*}
	\|\nabla(I-I_h)\log u_h^{k+1}\|_{L_h^2(\Omega)}
	\le&\ Ch\sum_{e}\sum_{|\alpha|=2}\Big(\int_{e}|D^{\alpha}\log u_h^{k+1}|^2\mathrm{d}x\Big)^{1/2}\\
	\le&\ Ch\sum_{e}\Big(\int_{e}\dfrac{1}{(u_h^{k+1})^4}|\nabla u_h^{k+1}|^4\mathrm{d}x\Big)^{1/2}\\
	\le&\ Ch\dfrac{1}{(\epsilon_0/4)^4}\sum_{e}\Big(\int_{e}|\nabla u_h^{k+1}|^4\mathrm{d}x\Big)^{1/2}\\
	\le &Ch.
	\end{align*}
	An application of the strict separation property and the $W^{1,\infty}$ bound of the numerical solution leads to
	\begin{equation}\label{eq:I10}
	\begin{aligned}
	|I_{10}|=&|\left((u_h^{k+1}-u_h^k)\nabla\log u_h^{k+1},\nabla\sigma_u^{k+1}\right)_h|\\
	\le&\ \|\dfrac{u_h^{k+1}-u_h^k}{u_h^{k+1}}\nabla u_h^{k+1}\|_{L_h^2(\Omega)}\|\nabla\sigma_u^{k+1}\|_{L_h^2(\Omega)}\\
	\le&\ \dfrac{\|\nabla u_h^{k+1}\|_{L^{\infty}(\Omega)}}{\epsilon_0/4}\left(\|\sigma_u^{k+1}-\sigma_u^k\|_{L_h^2(\Omega)}+C\tau\right)\|\nabla\sigma_u^{k+1}\|_{L_h^2(\Omega)}\\
	\le&\ C(\|\sigma_u^{k+1}-\sigma_u^k\|_{L_h^2(\Omega)}+C\tau)\|\nabla\sigma_u^{k+1}\|_{L_h^2(\Omega)}\\
	\le&\  C\|\sigma_u^{k+1}-\sigma_u^k\|_{L_h^2(\Omega)}^2+C\tau^2+\dfrac{1}{20}\|\nabla\sigma_u^{k+1}\|_{L_h^2(\Omega)}^2.
	\end{aligned}
	\end{equation}
	Combining the estimates \eqref{eq:I9}-\eqref{eq:I10}, we obtain
	\begin{equation}\label{eq:R2}
	\begin{aligned}
	|R_2|\le C\|\sigma_u^{k+1}-\sigma_u^k\|_{L_h^2(\Omega)}^2+\dfrac{1}{10}\|\nabla \sigma_u^{k+1}\|_{L_h^2(\Omega)}^2+Ch^2+C\tau^2.
	\end{aligned}
	\end{equation}

	Finally, combining \eqref{eq:I11},\eqref{eq:R1} and \eqref{eq:R2} in \eqref{eq:err}, we find
	\begin{equation}\label{eq:err1}
	\begin{aligned}
	\dfrac{1}{2\tau}&\left(\|\sigma_u^{k+1}\|_{L_h^2(\Omega)}^2-\|\sigma_u^{k}\|_{L_h^2(\Omega)}^2+\|\sigma_u^{k+1}-\sigma_u^k\|_{L_h^2(\Omega)}^2\right)+\|\nabla \sigma_u^{k+1}\|_{L_h^2(\Omega)}^2\\
	\le&\ C\|\sigma_u^k\|_{L^2(\Omega)}^2+C\|\sigma_u^{k+1}-\sigma_u^k\|_{L_h^2(\Omega)}^2+\dfrac{1}{2}\|\nabla \sigma_u^{k+1}\|_{L_h^2(\Omega)}^2+Ch^2+C\tau^2\\
	&+Ch^2\|\nabla\sigma_u^k\|_{L^2(\Omega)}^2+C\tau\int_{t_k}^{t_{k+1}}\|u_{tt}\|_{L^2(\Omega)}^2\mathrm{d}s.
	\end{aligned}
	\end{equation}
	Multiplying by $2\tau$ on both sides of \eqref{eq:err1} and summing up from $k=0$ to $m-1$, we get
	\begin{equation*}
	\begin{aligned}
	\|\sigma_u^{m}\|_{L_h^2(\Omega)}^2-\|\sigma_u^{0}\|_{L_h^2(\Omega)}^2+(1-2C\tau)\|\sigma_u^{k+1}-\sigma_u^k\|_{L_h^2(\Omega)}^2+\tau\sum_{k=0}^{m-1}\|\nabla \sigma_u^{k+1}\|_{L_h^2(\Omega)}^2\\
	\le C\tau\sum_{k=0}^{m-1}\|\sigma_u^k\|_{L_h^2(\Omega)}^2+Ch^2+C\tau^2+C\tau h^2\sum_{k=0}^{m-1}\|\nabla\sigma_u^k\|_{L^2(\Omega)}^2+C\tau^2\int_{0}^{T}\|u_{tt}\|_{L^2}^2\mathrm{d}s.
	\end{aligned}
	\end{equation*}
	
	Assuming $1-2C\tau>0$, $1-Ch^2>1/2$  since $\tau$ and $h$ is small enough, and
	applying the discrete Gronwall inequality (Lemma \ref{lem:gronwall})  to the above leads to
	\begin{equation*}
	\|\sigma_u^{m}\|_{L_h^2(\Omega)}+\Big(\tau\sum_{k=0}^{m-1}\|\nabla \sigma_u^{k+1}\|_{L_h^2(\Omega)}^2\Big)^{1/2}\le C(h+\tau),\ \forall m\in\mathbb{N}.
	\end{equation*}
	A combination of the above estimates for $\|\sigma_u^{m}\|_{L_h^2(\Omega)}$ and $\|\nabla \sigma_u^{k+1}\|_{L_h^2(\Omega)}$ with \eqref{interpolation} leads to the desired error estimates for $u$. Finally, we obtain the error estimates for $v$ from \eqref{errorv} and  \eqref{errorv2}.
	
	The proof of {\bf Theorem  \ref{th.err}} is complete.
	
	\section{Finite-time blowup}\label{sec4}
	In this section, we  discuss whether the solution of the fully discrete scheme \eqref{eq:fem1}-\eqref{eq:fem2} will blow up in finite time.

	We first prove a discrete analog of  Lemma \ref{lem:blowup}.
	Taking $\varphi=I_h\Phi$ in \eqref{eq:fem1}, where $\Phi$ is defined as in Lemma \ref{lem:blowup}, then from \eqref{eq:I_h}, we have the following error estimate
	\begin{equation}\label{eq:I_h2}
	\|\varphi-\Phi\|_{L^2(\Omega)}+h\|\varphi-\Phi\|_{L^2(\Omega)}\le Ch^2\|\Phi\|_{H^2(\Omega)}.
	\end{equation}
	\begin{lemma}\label{lem:mk}
		Assume that $u$ is smooth enough for a fix time $T>0$, let $M^k=(u_h^k,\ \varphi)_h,\ \theta=(u,1)$. Under the mild mesh-sizes requirement $\tau\le Ch$, if $(u_h^0,\varphi)_h<\infty$,
		then $(u_h^k,\varphi)_h<\infty$ and the following inequality holds
		\begin{equation*}
		\frac{M^{k+1}-M^k}{\tau}\le 4\theta-\dfrac{\alpha\chi}{2\pi}\theta^2+C_1\theta M^k+C_2\theta^{3/2}(M^k+Ch)^{1/2}+C_0h.
		\end{equation*}
	\end{lemma}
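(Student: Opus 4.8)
The plan is to take the test function $\varphi=I_h\Phi$, with $\Phi$ as in Lemma \ref{lem:blowup}, in the density equation \eqref{eq:fem1}. Since $\big(\overline{\partial}u_h^k,I_h\Phi\big)_h=(M^{k+1}-M^k)/\tau$, this gives the exact discrete identity
\[
\frac{M^{k+1}-M^k}{\tau}=-\big(u_h^k\nabla I_h\log u_h^{k+1},\,\nabla I_h\Phi\big)_h+\chi\big(u_h^k\nabla v_h^k,\,\nabla I_h\Phi\big)_h=:T_1+T_2.
\]
The aim is to show $T_1=\int_\Omega u^k\Delta\Phi\,\mathrm{d}x+O(h)$ and $T_2=\chi\int_\Omega u^k\nabla v^k\cdot\nabla\Phi\,\mathrm{d}x+O(h)$, so that $T_1+T_2=\frac{\mathrm{d}}{\mathrm{d}t}\int_\Omega u\Phi\,\mathrm{d}x\big|_{t=t_k}+O(h)$; indeed, testing \eqref{1.1} against $\Phi$ and integrating by parts yields $\frac{\mathrm{d}}{\mathrm{d}t}\int u\Phi=\int u\Delta\Phi+\chi\int u\nabla v\cdot\nabla\Phi$, the boundary contributions cancelling through the no-flux condition together with $\partial_{\bm n}\Phi=0$ (since $\nabla\Phi$ is supported in $B(q,r_2)\subset\subset\Omega$). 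One would then invoke the continuous differential inequality of Lemma \ref{lem:blowup} and translate $\int u^k\Phi$ back to $M^k$. Throughout, the mesh constraint $\tau\le Ch$ converts every $O(\tau)$ into $O(h)$, and finiteness of $M^k$ is automatic because $X_h$ is finite-dimensional.

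The reduction of $T_1$ is the main obstacle, and it is here that the separation property and $W^{1,\infty}$ bound of Lemma \ref{w1pbound} are indispensable. A useful preliminary observation is that on each triangle $u_h^k$ is affine while $\nabla I_h\log u_h^{k+1}$ and $\nabla I_h\Phi$ are constant, so the integrands of $T_1,T_2$ are piecewise affine and the vertex quadrature is exact; thus $T_1,T_2$ equal the corresponding continuous integrals with no quadrature error. For $T_1$ I would then successively: (i) replace $\nabla I_h\Phi$ by $\nabla\Phi$ and $\nabla I_h\log u_h^{k+1}$ by $\nabla\log u_h^{k+1}$ using \eqref{eq:I_h} and the $O(h)$ bound on $\|\nabla(I-I_h)\log u_h^{k+1}\|$ already established in the proof of Theorem \ref{th.err} (which uses $u_h^{k+1}\ge\epsilon_0/4$ and $\|\nabla u_h^{k+1}\|_\infty\le C^*$); (ii) write $u_h^k\nabla\log u_h^{k+1}=(u_h^k/u_h^{k+1})\nabla u_h^{k+1}$ and replace $u_h^k/u_h^{k+1}$ by $1$, the error being bounded by $\tfrac{4}{\epsilon_0}\|u_h^{k+1}-u_h^k\|_{L^2}\|\nabla u_h^{k+1}\|_\infty\|\nabla\Phi\|_{L^2}=O(h)$ (here $\|u_h^{k+1}-u_h^k\|_{L^2}=O(h)$ follows from Theorem \ref{th.err} and $\tau\le Ch$); (iii) integrate by parts, $-\int\nabla u_h^{k+1}\cdot\nabla\Phi=\int u_h^{k+1}\Delta\Phi$, the boundary term vanishing since $\partial_{\bm n}\Phi=0$; and (iv) replace $u_h^{k+1}$ by $u^k$ at the cost $\big(\|e_u^{k+1}\|_{L^2}+\|u^{k+1}-u^k\|_{L^2}\big)\|\Delta\Phi\|_{L^2}=O(h)$. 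The point of integrating by parts onto the bounded $\Delta\Phi$ is that it requires only the $L^2$ estimate $\|e_u^{k+1}\|_{L^2}\le C(\tau+h)$ and avoids $\|\nabla e_u^{k+1}\|$, which Theorem \ref{th.err} controls only in the time-summed sense.

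The term $T_2$ is routine given the error estimates. After replacing $\nabla I_h\Phi$ by $\nabla\Phi$, I would substitute the exact solution: the difference $T_2-\chi\int u^k\nabla v^k\cdot\nabla\Phi$ splits as $\chi\int(u_h^k-u^k)\nabla v^k\cdot\nabla\Phi$, bounded by $C\|e_u^k\|_{L^2}$ using the smoothness of $v$, plus $\chi\int u_h^k\nabla(v_h^k-v^k)\cdot\nabla\Phi$, bounded by $\chi\|u_h^k\|_\infty\|\nabla e_v^k\|_{L^2}\|\nabla\Phi\|_{L^2}\le C^*C\|\nabla e_v^k\|_{L^2}$. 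Both are $O(h)$ uniformly in $k$ by Theorem \ref{th.err}, so $T_2=\chi\int u^k\nabla v^k\cdot\nabla\Phi+O(h)$.

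Combining the two, $\frac{M^{k+1}-M^k}{\tau}=\frac{\mathrm{d}}{\mathrm{d}t}\int u\Phi\big|_{t_k}+O(h)$, and Lemma \ref{lem:blowup} (with $\int_\Omega u_0=\theta$ by mass conservation and $\int_\Omega u^k\Phi$ in the role of its moment) bounds the right-hand side by $4\theta-\tfrac{\alpha\chi}{2\pi}\theta^2+C_1\theta\int u^k\Phi+C_2\theta^{3/2}(\int u^k\Phi)^{1/2}$. It remains to pass from $\int u^k\Phi$ to $M^k=(u_h^k,I_h\Phi)_h$: their difference equals $\int(u^k-u_h^k)\Phi+\int u_h^k(\Phi-I_h\Phi)-\epsilon_h(u_h^k,I_h\Phi)$, which is $O(h)$ by Theorem \ref{th.err}, \eqref{eq:I_h} and Lemma \ref{lem:qua}, whence $\int u^k\Phi\le M^k+Ch$. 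Substituting this into the two positive remainder terms and absorbing all the $O(h)$ contributions into a single constant $C_0h$ produces exactly the asserted inequality, the $C_2$-term retaining the form $\theta^{3/2}(M^k+Ch)^{1/2}$.
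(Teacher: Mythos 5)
Your proposal is correct and follows essentially the same route as the paper's proof: testing \eqref{eq:fem1} with $\varphi=I_h\Phi$, exploiting exactness of the vertex quadrature for piecewise-affine integrands, using the separation property and $W^{1,\infty}$ bounds of Lemma \ref{w1pbound} together with Theorem \ref{th.err} to reduce everything to the continuous inequality of Lemma \ref{lem:blowup} up to $O(h)$, and converting $(u^k,\Phi)$ into $M^k+Ch$ at the end. The only (cosmetic) difference is that you integrate the diffusion term fully by parts onto $\Delta\Phi$ so that only $L^2$-error bounds are needed, whereas the paper keeps $-(\nabla u^{k+1},\nabla\Phi)$ as the main term and bounds the residual $J_2$, which requires a pointwise-in-time control of $\|\nabla e_u^{k+1}\|_{L^2}$ that it obtains from the same $W^{1,\infty}$ bounds.
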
 
	\begin{proof}
		We rewrite  \eqref{eq:fem1} as 
		\begin{equation}\label{eq:bl0}
		\begin{aligned}
		\frac{M^{k+1}-M^k}{\tau}&=-(\nabla u^{k+1},\nabla\Phi)+\chi(u^k\nabla v^k,\nabla \Phi)+\sum_{i=1}^{8}J_i\\
		&\le 4\theta-\dfrac{\alpha\chi}{2\pi}\theta^2+C_1\theta(u^k,\Phi)+C_2\theta^{3/2}(u^k,\Phi)^{1/2}+\sum_{i=1}^{8}J_i,
		\end{aligned}
		\end{equation}
		where Lemma \ref{lem:blowup} has been used in the last inequality, and $J_i\ (i=1,\cdots,8)$ are defined as follows
		\begin{equation*}
		\begin{aligned}
		J_1=&(\nabla u^{k+1},\ \nabla(\Phi-\varphi)),\\
		J_2=&(\nabla (u^{k+1}-u_h^{k+1}),\ \nabla \varphi),\\
		J_3=&\left(\nabla u_h^{k+1},\nabla\varphi\right)-\left(\nabla u_h^{k+1},\nabla\varphi\right)_h,\\
		J_4=&\left((u_h^{k+1}-u_h^k)\nabla \log u_h^{k+1},\nabla\varphi\right)_h,\\
		J_5=&\chi\left(u^k\nabla v^k,\nabla(\varphi-\Phi)\right),\\
		J_6=&\chi(u_h^k\nabla v_h^k-u^k\nabla v^k,\nabla\varphi),\\
		J_7=&\chi(u_h^k\nabla v_h^k,\nabla\varphi)_h-\chi(u_h^k\nabla v_h^k,\nabla\varphi),\\
		J_8=&\left(u_h^k\nabla((I-I_h)\log u_h^{k+1}),\nabla\varphi\right)_h.
		\end{aligned}
		\end{equation*}
		Thanks to \eqref{eq:I_h2}, we obtain
		\begin{equation*}
		\begin{aligned}
		&(u^k,\Phi)=(u^k,\Phi-\varphi)+(u^k-u_h^k,\varphi)+(u_h^k,\varphi)-(u_h^k,\varphi)_h+(u_h^k,\varphi)_h\\
		\le&\ \|u^k\|_{L^2(\Omega)}\|\Phi-\varphi\|_{L^2(\Omega)}+\|e_u^k\|_{L^2(\Omega)}\|\varphi\|_{L^2(\Omega)}
		+Ch^2\|\nabla u_h^k\|_{L^2(\Omega)}\|\nabla\varphi\|_{L^2(\Omega)}+M^k\\
		\le& \ Ch+M^k,
		\end{aligned}
		\end{equation*}
		where the error estimates and $\tau\le Ch$ have been used in the last inequality.
		Substitution of above into \eqref{eq:bl0} leads to
		\begin{equation}\label{eq:bl1}
		\frac{M^{k+1}-M^k}{\tau}\le 4\theta-\dfrac{\alpha\chi}{2\pi}\theta^2+C_1\theta M^k+C_2\theta^{3/2}(M^k+Ch)^{1/2}+\sum_{i=1}^{8}J_i.
		\end{equation}
		Next, we  estimate $J_i\ (i=1,\cdots,8)$ respectively. Utilizing the property of the interpolation operator in \eqref{eq:I_h2}, we have
		\begin{equation}\label{eq:J1}
		|J_1|\le \|\nabla u^{k+1}\|_{L^2(\Omega)}\|\nabla(\Phi-\varphi)\|_{L^2(\Omega)}\le Ch.
		\end{equation}
    We derive from  {\bf Theorem  \ref{th.err}}   that
    \begin{equation}\label{eq:J2}
	\begin{aligned}
	|J_2|&=|(\nabla (u^{k+1}-u_h^{k+1}),\ \nabla (\varphi-\Phi))+(\nabla (u^{k+1}-u_h^{k+1}),\ \nabla \Phi)|\\
	&=|(\nabla (u^{k+1}-u_h^{k+1}),\ \nabla (\varphi-\Phi))-( (u^{k+1}-u_h^{k+1}),\Delta \Phi)|\\
	&\le Ch\|\nabla e_u^k\|_{L^2(\Omega)}\|\Phi\|_{H^2(\Omega)}+\| e_u^k\|_{L^2(\Omega)}\|\Delta \Phi\|_{L^2(\Omega)}\\
	&\le Ch,
	\end{aligned}
\end{equation}
	where the property of interpolation and $\tau\le Ch$ have been used. 
	Noticing the definition of $(\cdot,\cdot)_h$, we have
		\begin{equation}\label{eq:J3}
		J_3=\left(\nabla u_h^{k+1},\nabla\varphi\right)-\left(\nabla u_h^{k+1},\nabla\varphi\right)_h=0.
		\end{equation}
		An application of the strict separation property, the $W^{1,\infty}$ bound for the numerical solution and $\tau\le Ch$ indicates that
		\begin{equation}\label{eq:J4}
		\begin{aligned}
		|J_4|&=\Big|\big(\dfrac{u_h^{k+1}-u_h^k}{u_h^{k+1}}\nabla u_h^{k+1},\nabla\varphi\Big)_h\Big|\\
		&\le\Big\|\dfrac{u_h^{k+1}-u_h^k}{u_h^{k+1}}\nabla u_h^{k+1}\Big\|_{L_h^2(\Omega)}\|\nabla\varphi\|_{L_h^2(\Omega)}\\
		&\le\dfrac{\|\nabla u_h^{k+1}\|_{L^{\infty}(\Omega)}}{\epsilon_0/4}\|u_h^{k+1}-u_h^k\|_{L_h^2(\Omega)}\|\nabla\varphi\|_{L_h^2(\Omega)}\\
		&\le \dfrac{C^*}{\epsilon_0/4}(\|e_u^{k+1}-e_u^k\|_{L_h^2(\Omega)}+C\tau)\|\nabla\varphi\|_{L_h^2(\Omega)}\\
		&\le C(\|e_u^{k+1}\|_{L_h^2(\Omega)}+\|e_u^k\|_{L_h^2(\Omega)}+C\tau)\|\nabla\varphi\|_{L_h^2(\Omega)}\\
		&\le Ch.
		\end{aligned}
		\end{equation}
		Utilizing the property of the interpolation operator in \eqref{eq:I_h2}, we have
		\begin{equation}\label{eq:J5}
		\begin{aligned}
		|J_5|&\le \chi\|u^k\nabla v^k\|_{L^2(\Omega)}\|\nabla(\varphi-\Phi)\|_{L^2(\Omega)}\\
		&\le Ch\|u^k\|_{L^{\infty}(\Omega)}\|\nabla v^k\|_{L^2(\Omega)}\|\Phi\|_{H^2(\Omega)}\\
		&\le Ch\|u^k\|_{L^{\infty}(\Omega)}\|u^k\|_{L^2(\Omega)}\|\Phi\|_{H^2(\Omega)}\\
		&\le Ch.
		\end{aligned}
		\end{equation}
	    An application of the Cauchy-Schwarz inequality and the error estimates leads to
		\begin{equation}\label{eq:J6}
		\begin{aligned}
		&|J_6|=|-\chi(u_h^k\nabla e_v^k+e_u^k\nabla v^k,\nabla\varphi)|\\
		\le& \chi(\|u_h^k\|_{L^{\infty}(\Omega)}\|\nabla e_v^k\|_{L^2(\Omega)}\|\nabla\varphi\|_{L^2(\Omega)}+\|e_u^k\|_{L^2(\Omega)}\|\nabla v^k\|_{L^{2}(\Omega)}\|\nabla\varphi\|_{L^{\infty}(\Omega)})\\
		\le& Ch,
		\end{aligned}
		\end{equation}
		where $\tau\le Ch$ has been used in the last inequality.
		An application of Lemma \ref{lem:qua} to $J_4$ leads to
		\begin{equation}\label{eq:J7}
		|J_7|=|\chi(u_h^k\nabla v_h^k,\nabla\varphi)_h-\chi(u_h^k\nabla v_h^k,\nabla\varphi)|=0.
		\end{equation}
		Using the property of the interpolation operator $I_h$ as in the proof of the Theorem \ref{th.err}, we have
		\begin{align}
		|J_8|=&|\left(u_h^k\nabla((I-I_h)\log u_h^{k+1}),\nabla\varphi\right)_h|\le Ch.\label{eq:J8}
		\end{align}
		Finally, a substitution of \eqref{eq:J1}-\eqref{eq:J8} into \eqref{eq:bl1} implies that
		\begin{equation*}
		\frac{M^{k+1}-M^k}{\tau}\le 4\theta-\dfrac{\alpha\chi}{2\pi}\theta^2+C_1\theta M^k+C_2\theta^{3/2}(M^k+Ch)^{1/2}+C_0h.
		\end{equation*}
		The proof is complete.
	\end{proof}
\begin{remark}
	The positive constant $C_0$ in Lemma \ref{lem:mk} depends on the regularity of the exact solutions.
\end{remark}
	We can then derive  the following discrete analog of  {\bf Theorem \ref{thm:blowup}}.
 	\begin{Theorem}\label{th:blow}
		Assume that $\theta=(u,1)>8\pi/(\alpha\chi)$. If $(u_h^0,\varphi)_h$ is sufficiently small, $h$ and $\tau\le Ch$ are sufficiently small,
		then the solution $(u_h^k,v_h^k)$ to the fully discrete scheme \eqref{eq:fem1}-\eqref{eq:fem2} will blow up in finite time, namely the maximal existence time $T_{\max}=k_{\max}\tau$ of the discrete solutions is finite.
	\end{Theorem}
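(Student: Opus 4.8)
The plan is to convert the discrete differential inequality of Lemma \ref{lem:mk} into a statement that the moment $M^k=(u_h^k,\varphi)_h$ must decrease by a fixed amount at each time step, and then to contradict its nonnegativity. First I would record two elementary facts. Since $\Phi=\phi(|x-q|)\ge 0$ and nodal Lagrange interpolation preserves nonnegativity, the test function $\varphi=I_h\Phi$ is nonnegative; together with the positivity-preserving property $u_h^k>0$ and the definition of the discrete inner product, this gives $M^k=(u_h^k,\varphi)_h\ge 0$ for every $k$. Second, the hypothesis $\theta=(u,1)>8\pi/(\alpha\chi)$ makes the constant term in Lemma \ref{lem:mk} strictly negative: setting $2\delta_0:=\frac{\alpha\chi}{2\pi}\theta^2-4\theta$, one has $\frac{\alpha\chi}{2\pi}\theta-4>0$, hence $\delta_0>0$ and $4\theta-\frac{\alpha\chi}{2\pi}\theta^2=-2\delta_0$.

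Next I would absorb the remaining terms on the right-hand side into a single increasing function. Define
\[
g(M):=-2\delta_0+C_1\theta M+C_2\theta^{3/2}(M+Ch)^{1/2}+C_0h,
\]
so that Lemma \ref{lem:mk} reads $M^{k+1}-M^k\le\tau\,g(M^k)$. Because $g$ is increasing in $M$, on any interval $0\le M\le M^0$ its maximum is attained at $M=M^0=(u_h^0,\varphi)_h$. Choosing the initial moment $M^0$ and the mesh size $h$ small enough that $C_1\theta M^0+C_2\theta^{3/2}(M^0+Ch)^{1/2}+C_0h\le\delta_0$, I obtain $g(M)\le-\delta_0$ for all $M\in[0,M^0]$; this is exactly where the smallness assumptions on $(u_h^0,\varphi)_h$ and on $h$ are used.

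Then I would run an induction on $k$. As long as $M^k\le M^0$, the bound $g(M^k)\le-\delta_0$ applies and yields $M^{k+1}\le M^k-\delta_0\tau$; in particular $M^{k+1}<M^0$, so the hypothesis $M^k\le M^0$ is propagated. Telescoping gives $M^k\le M^0-k\delta_0\tau$ for every $k$ in the range of validity. Since $M^k\ge 0$, this forces $k\delta_0\tau\le M^0$, so the discrete solution can remain bounded for at most $k_{\max}\le M^0/(\delta_0\tau)$ steps, whence $T_{\max}=k_{\max}\tau\le M^0/\delta_0<\infty$.

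The main obstacle is making the contradiction logically airtight, since Lemma \ref{lem:mk} is itself derived through the error estimate of Theorem \ref{th.err}, which rests on the strict separation and $W^{1,\infty}$ bounds and is therefore valid only while the numerical solution remains regular. The argument should accordingly be phrased as a \emph{proof by contradiction}: assuming the discrete solution stays bounded (so that all these estimates, and hence Lemma \ref{lem:mk}, persist) up to step $k$, the telescoped inequality would force $M^k<0$ once $k>M^0/(\delta_0\tau)$, contradicting $M^k\ge 0$; hence the regularity underlying Lemma \ref{lem:mk} must break down at a finite step, which is precisely the assertion that $T_{\max}$ is finite. Care is needed to fix the smallness thresholds on $M^0$ and $h$ once and for all, independently of $k$, so that the per-step decrement $\delta_0\tau$ is uniform over the whole existence interval.
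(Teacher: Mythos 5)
Your proposal is correct and follows essentially the same route as the paper's proof: both turn Lemma \ref{lem:mk} into a uniform per-step decrement of $M^k$ (your $\delta_0\tau$ is the paper's $\tau\beta_0$) by using $\theta>8\pi/(\alpha\chi)$ to make the constant term negative and the smallness of $M^0$ and $h$ to absorb the remaining terms, then telescope and contradict $M^k\ge 0$. Your write-up is in fact slightly more careful than the paper's on two points -- justifying $M^k\ge 0$ via nonnegativity of $\varphi=I_h\Phi$ and positivity of $u_h^k$, and making explicit that the contradiction forces the breakdown of the regularity underlying Lemma \ref{lem:mk} -- but these are refinements of, not departures from, the paper's argument.
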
 
	\begin{proof}
		Obviously, we can derive the following inequality from Lemma \ref{lem:mk}
		\begin{align*}
				\frac{M^{k+1}-M^k}{\tau}\le 4\theta-\dfrac{\alpha\chi}{2\pi}\theta^2+C_1\theta M^k+C_2\theta^{3/2}(M^k)^{1/2}+C_3h^{1/2}.
		\end{align*}
		Denote $\beta=- (4\theta-\frac{\alpha\chi}{2\pi}\theta^2+C_3h^{1/2})$, we have the following inequality 
		\begin{align}\label{eq:induc}
		\frac{M^{k+1}-M^k}{\tau}\le -\beta+C_1\theta M^k+C_2\theta^{3/2}\sqrt{M^k}.
		\end{align}
		Under the condition that $\theta=(u,1)>8\pi/(\alpha\chi)$, we can choose sufficiently small $h$ such that $\beta>0$. Since $M^0=(u_h^0,\varphi)_h$ is sufficiently small, we have
		\begin{align*}
			\beta_0:=\beta-C_1\theta M^0-C_2\theta^{3/2}\sqrt{M^0}>0.
		\end{align*}
		
		We claim that the following inequality holds for all $k\in\mathbb{N}$
			\begin{align}\label{eq:mk}
				M^{k+1}\le M^k-\tau\beta_0.
			\end{align}
		We prove the above inequality by induction. Using the inequality \eqref{eq:induc} for $k=0$, we have
		\begin{align*}
		M^1\le M^0-\tau(\beta-C_1\theta M^0-C_2\theta^{3/2}\sqrt{M^0})=M^0-\tau\beta_0.
		\end{align*}
		Now assume that \eqref{eq:mk} holds for $k-1$, we have
		\begin{align*}
		M^{k}\le M^{k-1}-\tau\beta_0.
		\end{align*}
		Notice that $M^k$ is decreasing about $k$, we have
		\begin{align*}
		M^{k+1}&\le M^k-\tau(\beta-C_1\theta M^k-C_2\theta^{3/2}\sqrt{M^k})\\
		&\le M^k-\tau(\beta-C_1\theta M^{0}-C_2\theta^{3/2}\sqrt{M^{0}})\\
		&= M^k-\tau\beta_0.
		\end{align*}
		
		Next summing \eqref{eq:mk} over $k$ shows that
		\begin{align*}
			M^{k+1}\le M^0-
			(k+1)\tau\beta_0.
		\end{align*}
		 Hence, if the solution $(u_h^k,v_h^k)$ exists for all $k\ge0$, then $M^{k+1}$ becomes negative provided that $T>M^0/\beta_0$. This is a contradiction to the positivity of $M^k$. 
		 Thus, the proof is complete.
	\end{proof}
	\begin{remark}
		Note that in the classical Keller-Segel system, the solution may blow up in finite time. Based on the error estimates,
		we prove that the numerical solution can also blow up under large initial value.
		There are several numerical examples in \cite{2020Unconditionally} to validate the blowup behavior of the numerical solution to the fully discrete scheme \eqref{eq:fem1}-\eqref{eq:fem2}. The analysis of  Theorem \ref{th:blow} depends on the regularity of the solution, it is very interesting whether we can still have similar results under weak regularity, we will continue to conduct on this issue in the future. 
	\end{remark}
	\section{Conclusion}\label{sec6}
	In this paper, we established error estimates for a  fully discrete scheme proposed in \cite{2020Unconditionally} for the classical parabolic-elliptic Keller-Segel system, and showed that the numerical solution will blow up in finite time under some assumptions, similar to the situation for the exact solution of the classical parabolic-elliptic Keller-Segel system. 
	
	\section*{Acknowledgments}
	W. Chen is supported by the National Natural Science Foundation of China (NSFC) 12071090 and the work of J. Shen is partially supported by NSF Grant DMS-2012585 and AFOSR Grant FA9550-20-1-0309.
	
\begin{appendices}
	\section{Appendix}
	\begin{lemma}\label{lem:A_h}
		Denote $A=\alpha(-\Delta+I)^{-1},A_h=\alpha(-\Delta_h+I)^{-1}Q_h$, 
		where $Q_h$ is defined as $(Q_hu,\psi_h)=(u,\psi_h)_h$,
	then the following estimate holds for all $u\in H^2(\Omega)$, 
		\begin{align}
		\|A_hu-Au\|_{L^2(\Omega)}+h\|A_hu-Au\|_{H^1(\Omega)}\le Ch^2\|u\|_{H^2(\Omega)}.
		\end{align}
	\end{lemma}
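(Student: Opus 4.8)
The plan is to recognize $w:=Au$ and $w_h:=A_hu$ as the solutions of a continuous and a discrete variational problem for the coercive bilinear form
\[
a(\phi,\psi):=(\nabla\phi,\nabla\psi)+(\phi,\psi),
\]
and to control their difference through an intermediate Galerkin projection. By definition $w\in H^1(\Omega)$ satisfies $a(w,\psi)=\alpha(u,\psi)$ for all $\psi\in H^1(\Omega)$, while $w_h\in X_h$ satisfies $a(w_h,\psi_h)=\alpha(Q_hu,\psi_h)=\alpha(u,\psi_h)_h$ for all $\psi_h\in X_h$, by the defining property of $Q_h$. Since $a(\phi,\phi)=\|\phi\|_{H^1(\Omega)}^2$ the form is coercive, and elliptic regularity on the smooth domain $\Omega$ gives $\|w\|_{H^2(\Omega)}\le C\|u\|_{L^2(\Omega)}\le C\|u\|_{H^2(\Omega)}$.

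First I would introduce the standard Galerkin approximation $w_h^\ast\in X_h$ with the exact right-hand side, i.e. $a(w_h^\ast,\psi_h)=\alpha(u,\psi_h)$ for all $\psi_h\in X_h$. Classical finite-element theory then yields the optimal estimates
\[
\|w-w_h^\ast\|_{H^1(\Omega)}\le Ch\|w\|_{H^2(\Omega)},\qquad
\|w-w_h^\ast\|_{L^2(\Omega)}\le Ch^2\|w\|_{H^2(\Omega)},
\]
the first by C\'ea's lemma together with the interpolation bound \eqref{eq:I_h}, and the second by the Aubin--Nitsche duality argument applied to the same coercive elliptic problem. Combined with $\|w\|_{H^2(\Omega)}\le C\|u\|_{H^2(\Omega)}$ this already gives the desired order for the contribution $w-w_h^\ast$.

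Next I would estimate the purely discrete difference $\eta_h:=w_h^\ast-w_h\in X_h$, which by subtraction satisfies
\[
a(\eta_h,\psi_h)=\alpha\big[(u,\psi_h)-(u,\psi_h)_h\big],\qquad\forall\,\psi_h\in X_h.
\]
The key observation is that the quadrature only sees nodal values, so $(u,\psi_h)_h=(I_hu,\psi_h)_h$; hence the right-hand side splits as $\alpha(u-I_hu,\psi_h)+\alpha\,\epsilon_h(I_hu,\psi_h)$. The first piece is bounded by $Ch^2\|u\|_{H^2(\Omega)}\|\psi_h\|_{L^2(\Omega)}$ via \eqref{eq:I_h}, and the second, now involving only functions in $X_h$, by $Ch^2\|\nabla I_hu\|_{L^2(\Omega)}\|\nabla\psi_h\|_{L^2(\Omega)}\le Ch^2\|u\|_{H^2(\Omega)}\|\nabla\psi_h\|_{L^2(\Omega)}$ via Lemma \ref{lem:qua} and the $H^1$-stability of $I_h$. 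Choosing $\psi_h=\eta_h$ and using coercivity gives $\|\eta_h\|_{H^1(\Omega)}\le Ch^2\|u\|_{H^2(\Omega)}$.

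Finally, the triangle inequality combines the two contributions: since $\|\eta_h\|_{L^2(\Omega)}\le\|\eta_h\|_{H^1(\Omega)}\le Ch^2\|u\|_{H^2(\Omega)}$ and $h\|\eta_h\|_{H^1(\Omega)}\le Ch^3\|u\|_{H^2(\Omega)}$, both the $L^2$ and the $h$-weighted $H^1$ errors of $\eta_h$ are of order $h^2$, matching the order coming from $w-w_h^\ast$, which yields the claimed bound. I expect the only delicate point to be the treatment of the data-perturbation term $(u,\psi_h)-(u,\psi_h)_h$: because $u\notin X_h$, Lemma \ref{lem:qua} does not apply directly, and the identity $(u,\psi_h)_h=(I_hu,\psi_h)_h$ is exactly what reduces it to the sum of an interpolation error and a genuine $X_h$-to-$X_h$ quadrature error to which the lemma does apply.
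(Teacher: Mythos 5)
Your proof is correct, and it takes a recognizably different route from the paper's. The paper works directly with the difference $w_h-w$: it subtracts the two variational problems, tests with $w_h-\psi_h$ for arbitrary $\psi_h\in X_h$ (a first-Strang-lemma argument), takes the infimum over $\psi_h$ to get $\|w-w_h\|_{H^1}\le Ch\|u\|_{H^1}$, and then obtains the $L^2$ bound by a duality argument applied to the quadrature-perturbed problem (a step the paper states without detail, and which in fact requires absorbing the quadrature error once more). You instead insert the standard Galerkin solution $w_h^\ast$ with exact right-hand side, quote textbook C\'ea and Aubin--Nitsche estimates for $w-w_h^\ast$, and reduce the quadrature effect to a purely discrete perturbation $\eta_h=w_h^\ast-w_h$ controlled by coercivity alone, at order $h^2$ even in $H^1$. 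Both arguments hinge on the same treatment of the consistency error $(u,\psi_h)-(u,\psi_h)_h$, namely inserting $I_hu$ so that Lemma \ref{lem:qua} applies; your version is slightly cleaner because the nodal identity $(u,\psi_h)_h=(I_hu,\psi_h)_h$ (valid since $u\in H^2(\Omega)\hookrightarrow C(\overline{\Omega})$ in 2D) annihilates the term $(u-I_hu,\psi_h)_h$, which the paper has to estimate separately. What your modularity buys is that the duality argument is only ever invoked in the standard, quadrature-free setting; what the paper's direct approach buys is that no auxiliary discrete problem needs to be introduced, at the cost of a terser final duality step.
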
 
	\begin{proof}
		Let $w=Au$ and $w_h=A_hu$, we have the following equations:
		\begin{align*}
			(\nabla w,\nabla\psi)+(w,\psi)=\alpha(u,\psi),\ \forall\psi\in H^1,\\
			 (\nabla w_h,\nabla\psi)+(w_h,\psi)=\alpha(u,\psi)_h,\ \forall\psi\in X_h.
		\end{align*}
		Then we have
		\begin{align*}
		(\nabla (w_h-w),\nabla\psi)+(w_h-w,\psi)=\alpha((u,\psi)_h-(u,\psi)).
		\end{align*}
		Taking $\psi=w_h-\psi_h$ and denoting $w_h-w$ by $w_h-\psi_h-(w-\psi_h)$ in the above equation leads to
		\begin{align*}
			\|w_h-\psi_h\|_{H^1}^2\le 	\|w-\psi_h\|_{H^1}\|w_h-\psi_h\|_{H^1}+\alpha((u,w_h-\psi_h)_h-(u,w_h-\psi_h)).
		\end{align*}
			From \cite{2006Thomee} and
			Lemma \ref{lem:qua}, we have the following estimate
		\begin{align*}
		&|(u,w_h-\psi_h)-(u,w_h-\psi_h)_h|\\
		=&|(u-I_hu,w_h-\psi_h)-(u-I_h,w_h-\psi_h)_h+(I_hu,w_h-\psi_h)-(I_hu,w_h-\psi_h)_h|\\
		\le& Ch|u|_{H^1}\|w_h-\psi_h\|_{L^2}+Ch^2\|\nabla u\|_{L^2}\|\nabla(w_h-\psi_h)\|_{L^2}\\
		\le& Ch|u|_{H^1}\|w_h-\psi_h\|_{H^1}.
		\end{align*}
		Combing the above estimates with the elliptic regularity estimate leads to 
		\begin{align*}
			\|w-w_h\|_{H^1}&\le C\inf_{\forall \psi_h\in X_h}\|w-\psi_h\|_{H^1}+Ch|u|_{H^1}\\
			&\le Ch|w|_{H^2}+Ch|u|_{H^1}\le Ch\|u\|_{H^1}.
					\end{align*}
		 The $L^2$ error estimate can be obtained by using the duality argument
		\begin{align*}
			\|w-w_h\|_{L^2}\le Ch\|w-w_h\|_{H^1}+Ch^2|u|_{H^2}\le Ch^2\|u\|_{H^2}.
		\end{align*}
		Combing above estimates with the definitions of $A$ and $A_h$ shows that
		\begin{align*}
		&\|A_hu-Au\|_{L^2(\Omega)}+h\|A_hu-Au\|_{H^1(\Omega)}\\
		=&\|w_h-w\|_{L^2(\Omega)}+h\|w_h-w\|_{H^1(\Omega)}\\
		\le&  Ch^2\|u\|_{H^2(\Omega)},
		\end{align*}
		which completes the proof.
	\end{proof}
 In order to obtain Lemma \ref{lem:f3}, we proceed in several steps. Firstly, we deal with $\hat{u}_1:=u+hf_1$ and construct $f_1$ as follows.
        \begin{lemma}\label{lem:f1}
		Assume that $\tau\le Ch$ and $u$ is smooth enough, then there exists a bounded smooth  function $f_1$, such that $\hat{u}_1:=u+hf_1$ satisfies
		\begin{align}\label{hatu1}
		\left(\overline{\partial}\hat{u}_1^k,\varphi\right)_h+\left(\hat{u}_1^k\nabla I_h\log \hat{u}_1^{k+1},\nabla\varphi\right)_h-\chi\left(\hat{u}_1^k\nabla A_h\hat{u}_1^k,\nabla\varphi\right)_h=\langle\mathcal{R}_1^k,\varphi\rangle,
		\end{align}
		for all $\varphi\in X_h$, $k\in\mathbb{N}$, where $A_h$ is defined as in Lemma \ref{lem:A_h} and
		\begin{align*}
		|\langle\mathcal{R}^k_1,\varphi\rangle|\le Ch^2\|\varphi\|_{H^1}.
		\end{align*}
	\end{lemma}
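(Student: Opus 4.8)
The plan is a standard asymptotic-expansion (perturbation) argument in powers of $h$: substitute $\hat{u}_1=u+hf_1$ into the left-hand side of \eqref{hatu1}, expand every term, force the leading $O(h)$ contribution to vanish by an appropriate choice of $f_1$, and verify that the remainder is genuinely $O(h^2)$ in the $H^1$-dual norm.

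First I would \emph{define} the residual by
\[
\langle\mathcal{R}_1^k,\varphi\rangle:=\left(\overline{\partial}\hat{u}_1^k,\varphi\right)_h+\left(\hat{u}_1^k\nabla I_h\log \hat{u}_1^{k+1},\nabla\varphi\right)_h-\chi\left(\hat{u}_1^k\nabla A_h\hat{u}_1^k,\nabla\varphi\right)_h,
\]
and expand each factor. The essential expansions are: the logarithm
\[
\log\hat{u}_1^{k+1}=\log u^{k+1}+h\,\frac{f_1^{k+1}}{u^{k+1}}+O(h^2),
\]
legitimate because $u\ge\epsilon_0>0$ makes $1/u$ smooth and bounded; the difference quotient $\overline{\partial}\hat{u}_1^k=\overline{\partial}u^k+h\,\overline{\partial}f_1^k$; and the replacement of the discrete products $(\cdot,\cdot)_h$ by $(\cdot,\cdot)$, which costs only $O(h^2)$ by Lemma \ref{lem:qua} (after interpolating the exact factors into $X_h$). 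The genuinely first-order spatial defects come from the $H^1$-interpolation error $\nabla(I_h-I)\log u^{k+1}$, controlled in $L^2$ at order $h$ by \eqref{eq:I_h}, and the operator error $\nabla(A_h-A)u^k$, controlled at order $h$ by Lemma \ref{lem:A_h}; the forward-Euler truncation $\overline{\partial}u^k-u_t$ contributes at order $\tau$, which is $O(h)$ under $\tau\le Ch$.

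Collecting by order, the $O(1)$ part reproduces the weak form of the Keller--Segel identity $u_t=\nabla\!\cdot(u\nabla\log u)-\chi\nabla\!\cdot(u\nabla Au)$ at the exact solution and therefore vanishes. The $O(h)$ part assembles, schematically, into $\langle\partial_t f_1+\mathcal{L}_u f_1+G^k,\varphi\rangle$, where $\mathcal{L}_u$ is the spatial linearization of the Keller--Segel operator about $u$ (note that the diffusion term $\nabla\!\cdot(u\nabla\log u)=\Delta u$ is already linear, so $\mathcal{L}_u f_1=\Delta f_1-\chi\nabla\!\cdot(f_1\nabla Au)-\chi\nabla\!\cdot(u\nabla Af_1)$), and $G^k$ is an explicit functional of $u,\log u,Au$ and their derivatives collecting the leading interpolation, operator, and time-truncation defects listed above. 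I then \emph{define} $f_1$ as the solution of the linear parabolic problem $\partial_t f_1+\mathcal{L}_u f_1=-G$ with homogeneous initial datum $f_1(0)=0$ and the natural no-flux boundary condition. Since $u$ is smooth and strictly positive, the coefficients of $\mathcal{L}_u$ (built from $u,\,1/u,\,\nabla u,\,Au$) are smooth and bounded, so classical linear parabolic theory provides a unique smooth $f_1$, bounded together with its derivatives — exactly the asserted smoothness and boundedness.

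With this choice the $O(h)$ contribution cancels, and every surviving term is second order: the $L^2$-interpolation error is $O(h^2)$ by \eqref{eq:I_h}, the $L^2$-operator error is $O(h^2)$ by Lemma \ref{lem:A_h}, the quadrature error is $O(h^2)$ by Lemma \ref{lem:qua}, the Taylor remainder of the logarithm is $O(h^2)$, and the temporal remainders are of size $\tau^2+h\tau\le Ch^2$ since $\tau\le Ch$; pairing each against $\varphi$ and summing yields $|\langle\mathcal{R}_1^k,\varphi\rangle|\le Ch^2\|\varphi\|_{H^1}$. The hard part is the bookkeeping in the $O(h)$ collection: one must show that the nonlinear term $\hat{u}_1^k\nabla I_h\log\hat{u}_1^{k+1}$ and the nonlocal term $\hat{u}_1^k\nabla A_h\hat{u}_1^k$ each split cleanly into a first-order defect feeding $G^k$ and a linearized piece that assembles, together with the time-derivative term, into the \emph{same} linearized operator $\mathcal{L}_u$, and to reconcile the temporal and spatial orders under $\tau\le Ch$, so that the equation for $f_1$ is a well-posed parabolic problem determined only by the exact solution.
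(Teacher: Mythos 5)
Your proposal follows essentially the same route as the paper: the paper's Step 1 collects the $O(h)$ consistency defect $\mathcal{R}^k_0(u)$ (forward-Euler truncation, interpolation error on $\log u$, quadrature error, and the $A_h-A$ error, each bounded via \eqref{eq:I_h}, Lemma \ref{lem:qua} and Lemma \ref{lem:A_h}), takes $f_1$ to solve the linearized Keller--Segel parabolic problem \eqref{eq:f1} driven by $\tfrac{1}{h}\mathcal{R}^k_0(u)$ with smoothness from classical linear parabolic theory, and its Step 2 verifies that the surviving terms (Taylor remainder of the logarithm, second-order interpolation/quadrature/operator errors, and temporal remainders under $\tau\le Ch$) are $O(h^2)$, exactly as you describe. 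The one blemish to fix is a sign slip: with your definition $\mathcal{L}_u f_1=\Delta f_1-\chi\nabla\cdot(f_1\nabla Au)-\chi\nabla\cdot(u\nabla Af_1)$, the $O(h)$ block assembles into $\partial_t f_1-\mathcal{L}_u f_1+G^k$, so the defining equation must be the well-posed forward problem $\partial_t f_1-\mathcal{L}_u f_1=-G$ (this is the paper's \eqref{eq:f1}), whereas $\partial_t f_1+\mathcal{L}_u f_1=-G$ as literally written is backward parabolic and ill-posed with the initial datum $f_1(0)=0$.
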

\begin{proof}
	From \eqref{hatu1}, we have the following equality:
	\begin{equation}\label{eq:r1k}
		\begin{aligned}
		&(\dfrac{\partial\hat{u}_1}{\partial t},\varphi)+(\nabla\hat{u}_1,\nabla\varphi)-\chi(\hat{u}_1\nabla A\hat{u}_1,\nabla\varphi)
		\\=&(\dfrac{\partial\hat{u}_1}{\partial t},\varphi)+(\nabla\hat{u}_1,\nabla\varphi)-\chi(\hat{u}_1\nabla A\hat{u}_1,\nabla\varphi)+\langle\mathcal{R}^k_1,\varphi\rangle\\
		&-\left(\overline{\partial}\hat{u}_1^k,\varphi\right)_h-\left(\hat{u}_1^k\nabla I_h\log \hat{u}_1^{k+1},\nabla\varphi\right)_h+\chi\left(\hat{u}_1^k\nabla A_h\hat{u}_1^k,\nabla\varphi\right)_h,
		\end{aligned}
	\end{equation}
	where the operators $A$ and $A_h$ are defined as in Lemma \ref{lem:A_h}. Moreover, 	
	the left hand of \eqref{eq:r1k} can be rewritten as
	\begin{equation*}\label{t0}
	\begin{aligned}
	&(\dfrac{\partial\hat{u}_1}{\partial t},\varphi)+(\nabla\hat{u}_1,\nabla\varphi)-\chi(\hat{u}_1A\hat{u}_1,\nabla\varphi)\\
	=&h((\dfrac{\partial f_1}{\partial t},\varphi)+(\nabla f_1,\nabla\varphi)-\chi(uAf_1,\nabla\varphi)-\chi(f_1Au,\nabla\varphi))\\
	&-h^2(f_1Af_1,\nabla\varphi),
	\end{aligned}
	\end{equation*}
	where equation \eqref{eq:weak01} has been used.
	
{ \it Step 1: Construction for $f_1$.}  For any $\varphi\in H^1$, define $\langle\mathcal{R}^k_0(u),\varphi\rangle$ as
\begin{align*}
\langle\mathcal{R}^k_0(u),\varphi\rangle:=&(\dfrac{\partial u}{\partial t}-\overline{\partial} u^k,\varphi)+(\nabla(u-u^{k+1}),\nabla\varphi)-(u^k\nabla (I_h-I)(\log u^{k+1}),\nabla\varphi)_h\\
&-((u^k-u^{k+1})\nabla\log u^{k+1},\nabla\varphi)_h+(\nabla u^{k+1},\nabla\varphi)-(\nabla u^{k+1},\nabla\varphi)_h\\
&+\chi(u^k\nabla A_hu^k-u\nabla A_hu,\nabla\varphi)_h+\chi(u\nabla(A_h-A)u,\nabla\varphi)\\
:=&\langle\mathcal{N}(u),\varphi\rangle+\chi(u^k\nabla A_hu^k-u\nabla A_hu,\nabla\varphi)_h+\chi(u\nabla(A_h-A)u,\nabla\varphi),
\end{align*}
 we can show that $\frac{1}{h}\mathcal{R}^k_0(u)\in H^{-1}$ is well defined. 
Using the Cauchy-Schwarz inequality and the property of the interpolation, we obtain
\begin{align*}
|(\nabla u^{k+1},\nabla\varphi)-(\nabla u^{k+1},\nabla\varphi)_h|
=&(\nabla (I-I_h)u^{k+1},\nabla\varphi)-(\nabla (I-I_h)u^{k+1},\nabla\varphi)_h\\
\le& Ch\|u^{k+1}\|_{H^2}\|\nabla\varphi\|_{L^2},
\end{align*}
then the following estimate holds for $\langle\mathcal{N}(u),\varphi\rangle$:
\begin{align}\label{eq:Nu}
\langle\mathcal{N}(u),\varphi\rangle\le C_4h\|\varphi\|_{H^1},
\end{align}
and the positive constant
 $$C_4=C_4(\|u\|_{W^{2,\infty}(0,T;H^2)}+\|u\|_{W^{1,\infty}(0,T;L^{\infty})}\|\log u\|_{L^{\infty}(0,T;H^2)}).$$ 
An application of Lemma \ref{lem:A_h} leads to
\begin{align*}
|(u\nabla(A_h-A)u,\nabla\varphi)|
\le& \|u\|_{L^{\infty}}\|\nabla(A_hu-Au)\|_{L^2}\|\nabla\varphi\|_{L^2}\\
\le& Ch\|u\|_{L^{\infty}}\|u\|_{H^2}\|\nabla\varphi\|_{L^2}.
\end{align*}
Combining \eqref{eq:Nu} with few direct calculations shows that the following estimate holds for $\langle\mathcal{R}^k_0(u),\varphi\rangle$:
\begin{align*}
|\langle\mathcal{R}^k_0(u),\varphi\rangle|\le C_5h\|\varphi\|_{H^1},
\end{align*}
where $$C_5=C_5(\|u\|_{W^{2,\infty}(0,T;H^2)}+\|u\|_{W^{1,\infty}(0,T;L^{\infty})}\|\log u\|_{L^{\infty}(0,T;H^2)}+\|u\|_{W^{1,\infty}(0,T;L^{\infty})}^2),$$ then $\frac{1}{h}\langle\mathcal{R}^k_0(u),\varphi\rangle$ is well-defined. 
Combining $\langle\mathcal{R}^k_0(u),\varphi\rangle$ with \eqref{eq:r1k} leads to the following linear partial differential equation for $f_1$:
\begin{align}\label{eq:f1}
(\dfrac{\partial f_1}{\partial t},\varphi)+(\nabla f_1,\nabla\varphi)-\chi(u\nabla Af_1,\nabla\varphi)-\chi(f_1\nabla Au,\nabla\varphi)=\dfrac{1}{h}\langle\mathcal{R}^k_0(u),\varphi\rangle,
\end{align}
for all $t\in(t_k,t_{k+1}]$. From \cite[Chapter7.1, Theorem 3]{evans}, there exists a weak solution $f_1$ of \eqref{eq:f1}. In addition, from \cite[Chapter7.1, Theorem 7]{evans}, suppose that $u$ is smooth enough such that $\mathcal{R}_0^k(u)$ is smooth enough in $\overline{\Omega}\times[t_k,t_{k+1}]$, and the $m^{th}$-order compatibility conditions hold for $m=0,1,\cdots$, then the problem \eqref{eq:f1} has a smooth enough solution $f_1$ in $\overline{\Omega}\times[t_k,t_{k+1}]$.

{\it Step 2: Construction for $\langle\mathcal{R}^k_1(\hat{u}_1),\varphi\rangle$.} 	Let $\langle\mathcal{R}^k_1(\hat{u}_1),\varphi\rangle$ be
	\begin{align*}
	\langle\mathcal{R}^k_1(\hat{u}_1),\varphi\rangle
	:=&h\langle\mathcal{N}(f_1),\varphi\rangle+\chi h((u\nabla(A_h-A)f_1+f_1\nabla(A_h-A)u,\nabla\varphi))\\
	+&\chi h((u^k\nabla A_hf_1^k-u\nabla A_hf_1,\nabla\varphi)_h+(f_1^k\nabla A_hu^k-f_1\nabla A_hu,\nabla\varphi)_h)\\
	-&(u^k\nabla I_h(\log (u^{k+1}+hf_1^{k+1})-\log u^{k+1}-\dfrac{hf_1^{k+1}}{u^{k+1}}),\nabla\varphi)_h\\
	-&h(u^k\nabla(I_h-I)\dfrac{f_1^{k+1}}{u^{k+1}},\nabla\varphi)_h-h((u^k-u^{k+1})\nabla\dfrac{f_1^{k+1}}{u^{k+1}},\nabla\varphi)_h\\
	+&\left(\overline{\partial} u^k,\varphi\right)-\left(\overline{\partial} u^k,\varphi\right)_h+h^2(f_1^k\nabla\dfrac{f_1^{k+1}}{u^{k+1}},\nabla\varphi)+\chi h^2(f_1\nabla Af_1,\nabla\varphi)\\
	+&\chi((u\nabla A_hu,\nabla\varphi)_h-(u\nabla A_hu,\nabla\varphi)).
	\end{align*}
	Similarly, the following estimate holds for $ \langle\mathcal{N}(f_1),\varphi\rangle$ as discussed in \eqref{eq:Nu}:
	\begin{align}\label{eq:Nf1}
		\langle\mathcal{N}(f_1),\varphi\rangle\le C_6h\|\varphi\|_{H^1},
	\end{align} 
	where the positive constant $$C_6=C_6(\|f_1\|_{W^{2,\infty}(0,T;H^2)}+\|f_1\|_{W^{1,\infty}(0,T;L^{\infty})}\|\log u\|_{L^{\infty}(0,T;H^2)}).$$
	Combining \eqref{eq:Nf1} with few calculations yields the following estimate for  $\langle\mathcal{R}^k_1(\hat{u}_1),\varphi\rangle$:	
	\begin{align*}
	|\langle\mathcal{R}^k_1(\hat{u}_1),\varphi\rangle|\le C_7h^2\|\varphi\|_{H^1},
	\end{align*}
	where the positive constant
	\begin{align*}
		C_7=C_7(\|u\|_{W^{1,\infty}(0,T;H^2)}+(\|u\|_{W^{1,\infty}(0,T;L^{\infty})}+\|f_1\|_{L^{\infty}(0,T;L^{\infty})})\|\frac{f_1}{u}\|_{L^{\infty}(0,T;H^2)}\\
		+\|u\|_{W^{1,\infty}(0,T;L^{\infty})}\|f_1\|_{W^{1,\infty}(0,T;L^{\infty})}+\|u\|_{L^{\infty}(0,T;H^2)}^2+\|f_1\|_{L^{\infty}(0,T;L^{\infty})}^2)+C_6,
	\end{align*}
	 then the $O(h^2)$ consistency for $\hat{u}_1=u+hf_1$ is obtained, which leads to Lemma \ref{lem:f1}.
\end{proof}
\begin{remark}
	Taking $\varphi=1$ in \eqref{hatu1} leads to  $(\hat{u}_1^k,1)_h=(\hat{u}_1^0,1)_h$ for $k\in\mathbb{N}$, i.e., $\hat{u}_1$ preserves mass conservation property. Choosing suitable initial condition  $f_1(x,0)=0$ such that $(f_1(x,0),1)_h=0$, we obtain $(\hat{u}_1^k,1)_h=(u^0,1)_h$.
\end{remark}

After repeated application of the perturbation argument as illustrated in Lemma \ref{lem:f1}, Lemma \ref{lem:f3} can be proved.

	\begin{proof}[Proof of Lemma \ref{lem:f3}] 
	The duality product $\frac{1}{h^2}\langle\mathcal{R}^k_1(\hat{u}_1),\varphi\rangle$ is well-defined since the fact that $\frac{1}{h^2}\langle\mathcal{R}^k_1(\hat{u}_1),\varphi\rangle$ is uniformly bounded as $h\rightarrow0,\tau\rightarrow0$ and $\tau\le Ch$. We can construct $f_2$ by solving the following linear partial differential equation:
	\begin{align}\label{eq:f2}
	(\dfrac{\partial f_2}{\partial t},\varphi)+(\nabla f_2,\nabla\varphi)-\chi(u\nabla Af_2,\nabla\varphi)-\chi(f_2\nabla Au,\nabla\varphi)=\dfrac{1}{h^2}\langle\mathcal{R}^k_1(\hat{u}_1),\varphi\rangle,
	\end{align}
	for all $t\in(t_k,t_{k+1}]$. As discussed in Step 1 above, the problem \eqref{eq:f2} has a smooth enough solution $f_2$ in $\overline{\Omega}\times[t_k,t_{k+1}]$.

	By repeated application of the methods in Step 2 above, we can construct $\langle\mathcal{R}^k_2(\hat{u}_2),\varphi\rangle $ by $\hat{u}_2:=\hat{u}_1+h^2f_2$  such that the $O(h^3)$ consistency for $\hat{u}_2$ is arrived: 
	\begin{align}\label{eq:r2k}
	\left(\overline{\partial}\hat{u}_2^k,\varphi\right)_h+\left(\hat{u}_2^k\nabla I_h\log \hat{u}_2^{k+1},\nabla\varphi\right)_h-\chi\left(\hat{u}_2^k\nabla A_h\hat{u}_2^k,\nabla\varphi\right)_h=\langle\mathcal{R}^k_2(\hat{u}_2),\varphi\rangle,
	\end{align}
	for all $\varphi\in X_h$, $k\in\mathbb{N}$, where
	\begin{align*}
	|\langle\mathcal{R}^k_2(\hat{u}_2),\varphi\rangle|\le Ch^3\|\varphi\|_{H^1},
	\end{align*}
	where $C$ is a positive constant depending on the derivatives of $\hat{u}_2$, such that $\frac{1}{h^3}\langle\mathcal{R}^k_2(\hat{u}_2),\varphi\rangle$ is well-defined.
	
	Again, by using Step 1 in Lemma \ref{lem:f1}, the correction function $f_3$ can be constructed by the following linear partial differential equation:
	\begin{align}\label{eq:f3}
	(\dfrac{\partial f_3}{\partial t},\varphi)+(\nabla f_3,\nabla\varphi)-\chi(u\nabla Af_3,\nabla\varphi)-\chi(f_3\nabla Au,\nabla\varphi)=\dfrac{1}{h^3}\langle\mathcal{R}^k_2(\hat{u}_2),\varphi\rangle,
	\end{align}
	for all $t\in(t_k,t_{k+1}]$, and the problem \eqref{eq:f3} has a smooth enough solution $f_3$ in $\overline{\Omega}\times[t_k,t_{k+1}]$. 
	
	Combing equations \eqref{eq:f1},\eqref{eq:f2} and \eqref{eq:f3} leads to
	\begin{equation}\label{hatu}
	\begin{aligned}
	&(\dfrac{\partial\hat{u}}{\partial t},\varphi)+(\nabla\hat{u},\nabla\varphi)-\chi(\hat{u}A\hat{u},\nabla\varphi)\\
	=&\langle\mathcal{R}_0^k(u),\varphi\rangle+\langle\mathcal{R}_1^k(\hat{u}_1),\varphi\rangle+\langle\mathcal{R}_2^k(\hat{u}_2),\varphi\rangle-h^4\chi((f_2\nabla Af_2,\nabla\varphi)\\
	&+(f_1\nabla Af_3+f_3\nabla A f_1,\nabla\varphi))-h^5\chi(f_2\nabla Af_3+f_3\nabla Af_2,\nabla\varphi)\\
	&-h^6\chi(f_3\nabla Af_3,\nabla\varphi).
	\end{aligned}
	\end{equation}
	Denote $\langle\mathcal{R}^k(\hat{u}),\varphi\rangle$ as follows
	\begin{align*}
	\langle\mathcal{R}^k(\hat{u}),\varphi\rangle
	&	:=h^3((\overline{\partial}f_3^k-\dfrac{\partial f_3}{\partial t},\varphi)+(\overline{\partial}f_3^k,\varphi)_h-(\overline{\partial}f_3^k,\varphi)\\
	&+(f_3^k\nabla(I_h-I)\log \hat{u}_2^{k+1},\varphi)_h-((u^{k+1}-u^k)\nabla\dfrac{f_3^{k+1}}{\hat{u}_2^{k+1}},\nabla\varphi)_h\\
	&+(\nabla f_3^{k+1},\nabla\varphi)_h-(\nabla f_3^{k+1},\nabla\varphi)+((f_3^k-f_3^{k+1})\nabla\log \hat{u}_2^{k+1},\nabla\varphi)_h)\\
	&+h^6(f_3^k\nabla\dfrac{f_3^{k+1}}{\hat{u}_2^{k+1}},\nabla\varphi)_h+(\hat{u}^k\nabla(\log\hat{u}^{k+1}-\log\hat{u}_2^{k+1}-\dfrac{h^3f_3^{k+1}}{\hat{u}_2^{k+1}}),\nabla\varphi)_h\\
		&+(\hat{u}_2^k\nabla(\log\hat{u}^{k+1}-\log\hat{u}_1^{k+1}-\dfrac{h^2f_2^{k+1}}{\hat{u}_1^{k+1}}),\nabla\varphi)_h+h^4(f_2^k\nabla\dfrac{f_2^{k+1}}{\hat{u}_1^{k+1}},\nabla\varphi)_h\\
		&+h^2((\overline{\partial}f_2^k,\varphi)_h-(\overline{\partial}f_2^k,\varphi))+h^3\chi((f_3^k\nabla A_h\hat{u}_2^k+\hat{u}_2^k\nabla A_hf_3^k,\nabla\varphi)_h\\
		&-(f_3\nabla A_h\hat{u}_2+\hat{u}_2\nabla A_hf_3,\nabla\varphi))+h^6\chi((f_3^k\nabla A_hf_3^k,\nabla\varphi)_h\\
		&-(f_3\nabla A_hf_3,\nabla\varphi))+h^4\chi((f_2^k\nabla A_hf_2^k,\nabla\varphi)_h-(f_2\nabla A_hf_2,\nabla\varphi))\\
		&+h^2\chi((f_2\nabla A_h\hat{u}_1+\hat{u}_1\nabla A_hf_2,\nabla\varphi)_h-(f_2\nabla A_h\hat{u}_1+\hat{u}_1\nabla A_hf_2,\nabla\varphi)).
	\end{align*}
	Combining above with few direct calculations shows the following estimate for $\langle\mathcal{R}^k(\hat{u}),\varphi\rangle$
	\begin{align*}
		|\langle\mathcal{R}^k(\hat{u}),\varphi\rangle|\le Ch^4\|\varphi\|_{H^1},
	\end{align*}
	where $C$ depending on the derivatives of $u$, then the $\frac{1}{h^4}\langle\mathcal{R}^k(\hat{u}),\varphi\rangle$ is well defined and the $O(h^4)$ consistency holds for $\hat{u}=u+hf_1+h^2f_2+h^3f_3$, which leads to Lemma \ref{lem:f3}. 
\end{proof}
\begin{remark}\label{remak:massconserve}
	Similarly, taking $\varphi=1$ in \eqref{eq:consistency}  leads to $(\hat{u}^k,1)_h=(\hat{u}^0,1)_h$ for $k\in\mathbb{N}$. Choosing the initial condition $f_i(x,0)=0$ such that $(f_i(x,0),1)_h=0$ ($i=1$, $2$, $3$), we obtain $(\hat{u}^k,1)_h=(u^0,1)_h$.
\end{remark}

\end{appendices}


\begin{thebibliography}{99}
		
		\bibitem{Sobolev}
		R.~Adams and J.~Fournier, Sobolev Spaces, 2nd~Edition,
		Academic Press, Singapore, 2003.
		
		\bibitem{2012Blanchet}
		A.~Blanchet and P.~Laurençot, The parabolic-parabolic Keller-Segel system
		with critical diffusion as a gradient flow in $\mathbb{R}^d, d\ge 3$, Communications in Partial Differential Equations, 38(4): 658-686, 2012.
		
		\bibitem{2006Blanchet}
		A.~Blanchet, J.~Dolbeault and B.~Perthame, Two-dimensional Keller-Segel
		model: Optimal critical mass and qualitative properties of the solutions, Electronic Journal of Differential Equations, 2006(44): 285--296, 2016. 
		
		
		\bibitem{2008brenner}
		S.~C. Brenner and L.~R. Scott, The Mathematical Theory of
		Finite Element Methods, 3rd Edition, Springer, New York,
		2007.
		
		
		\bibitem{2007The}
		V.~Calvez and L.~Corrias, The parabolic-parabolic Keller-Segel model in
		$\mathbb{R}^2$, Communications in Mathematical Sciences, 6(2): 417--447, 2008.
		
		\bibitem{2019Calvez}
		V.~Calvez, L.~Corrias and M.~A.~Ebde, Blow-up, concentration phenomenon and
		global existence for the Keller-Segel model in high dimension,
		Journal of Differential Equations, 267(11): 561--584, 2019.
		
		\bibitem{2019chen}
		W.~Chen, C.~Wang, X.~Wang and S.~Wise, Positivity-preserving, energy stable numerical schemes for the Cahn-Hilliard equation with logarithmic potential, 
		Journal of Computational Physics X, 3: 100031, 2019. 
		
		\bibitem{1995Diaz}
		J.~I. Diaz and T.~Nagai, Symmetrization in a parabolic-elliptic system
		related to chemotaxis, Advances in Mathematical Sciences and Applications, 5(2): 659-680, 1995.
		
		\bibitem{2004Optimal}
		J.~Dolbeault and B.~Perthame, Optimal critical mass in the two-dimensional
		Keller-Segel model in $\mathbb{R}^2$. Retour Au Numéro, 339(9): 611--616, 2004.
		
		\bibitem{2019dong}
		L.~Dong, C.~Wang, H.~Zhang and Z.~Zhang, A positivity-preserving, energy stable and convergent numerical scheme for the Cahn-Hilliard equation with a Flory-Huggins-deGennes energy, 
		Communications in Mathematical Sciences, 17(4): 921-939, 2019.  
		
		\bibitem{2020dong}
		L.~Dong, C.~Wang, H.~Zhang and Z.~Zhang, 
		A positivity-preserving second-order BDF scheme for the Cahn-Hilliard equation with variable interfacial parameters, 
		Communications in Computational Physics, 28(3): 967-998, 2020. 
		
		\bibitem{2021dong}
		L.~Dong, C.~Wang, S.~Wise and Z.~Zhang, 
		A positivity preserving, energy stable scheme for the ternary Cahn-Hilliard system with the singular interfacial parameters, 
		Journal of Computational Physics, 442: 110451, 2021. 
		
		\bibitem{2020duan}
		C.~Duan, C.~Liu, C.~Wang and X.~Yue, 
		Convergence analysis of a numerical Scheme for the porous medium equation by an energetic variational approach, 
		Numerical Mathematics: Theory, Methods and Applications, 13(1): 63-80, 2020. 
		
		\bibitem{2021duan}
		C.~Duan, W.~Chen, C.~Liu, C.~Wang and S.~Zhou, 
		Convergence analysis of structure-preserving numerical methods for nonlinear Fokker-Planck equations with nonlocal interactions, 
		Mathematical Methods in the Applied Sciences, accepted and in press, 2021.  
		
		\bibitem{2008epshteyn}
		Y.~Epshteyn and A.~Kurganov, New interior penalty discontinuous Galerkin methods for the Keller-Segel
		Chemotaxis model, SIAM Journal on Numerical Analysis, 47(1): 386--408, 2008.
		
		\bibitem{2009epshteyn} 
		Y.~Epshteyn and  A.~Izmirlioglu, Fully discrete analysis of a discontinuous finite element method for the Keller-Segel chemotaxis model, Journal of Scientific Computing, 40(1--3): 211--256, 2009.
		
		\bibitem{evans}
		L.~C.~Evans, Partial Differential Equations. AMS Graduate studies in Mathematics. AMS: Providence, RI, 1998.
		\bibitem{2006Filbet}
		F.~Filbet, A finite volume scheme for the Patlak–Keller–Segel chemotaxis
		model, Numerische Mathematik, 104(4): 457--488, 2006.
		
		\bibitem{2015Fujie}
		K.~Fujie, M.~Winkler and T.~Yokota, Boundedness of solutions to
		parabolic-elliptic chemotaxis-growth systems with signal-dependent
		sensitivity, Mathematical Methods in the Applied Sciences, 38(6): 1212--1224, 2015.
		
		\bibitem{2021arXiv210106748F}
		J.~{Fuhrmann}, J.~{Lankeit}
		and M.~{Winkler}, A double critical mass
		phenomenon in a no-flux-Dirichlet Keller-Segel system, https://arxiv.org/abs/2101.06748, 2021.
		
		\bibitem{1998Gajewski}
		H.~Gajewski, K.~Zacharias, and K.~Gr\"oger, 
		Global behavior of a
		reaction-diffusion system modelling chemotaxis, Mathematische
		Nachrichten, 195(1): 77--114, 1998.
		
		\bibitem{li2019}
		L.~Guo, X.~Li, and Y.~Yang, Energy dissipative local discontinuous Galerkin
		methods for keller–segel chemotaxis model, Journal of Scientific
		Computing, 78: 1387–1404, 2019.
		
		\bibitem{2018gurusamy}
		A.~Gurusamy, K.~Balachandran, Finite element method for solving Keller-Segel chemotaxis system with cross-diffusion, International Journal of Dynamics and Control, 6:539--549,2018.
		
		\bibitem{2001Horstmann}
		D.~Horstmann, The nonsymmetric case of the Keller-Segel model in chemotaxis:
		Some recent results, Nonlinear Differential Equations and
		Applications, 8: 399--423, 2001.
		
		
		\bibitem{2004Horstmann}
		D.~Horstmann, From 1970 until present: The Keller-Segel model in chemo-taxis
		and its consequences, Jahresbericht der Deutschen
		Mathematiker-Vereinigung, 106(2): 51--69, 2004.
		
		\bibitem{2019BLOW}
		A.~J\"uengel and O.~Leingang, Blow-up of solutions to semi-discrete
		parabolic-elliptic Keller-Segel models, Discrete and Continuous
		Dynamical Systems, 24(9): 4755--4782, 2019.
		
		\bibitem{1970Keller}
		E.~F. Keller and L.~A. Segel, Initiation on slime mold aggregation viewed as instability. Journal of Theoretical Biology, 26(3): 399–415, 1970.
		
		\bibitem{1971Keller}
		E.~F. Keller and L.~A. Segel, Model for chemotaxis, Journal of Theoretical Biology,
		30(2): 225--234, 1971.
		
		\bibitem{Hideo2008Local}
		H.~Kozono and Y.~Sugiyama, Local existence and finite time blow-up of
		solutions in the 2-D Keller-Segel system, Journal of Evolution
		Equations, 8: 353--378, 2008.
		
		\bibitem{2017li}
		X.~Li, C. W. Shu, and Y.~Yang, Local discontinuous Galerkin method for the
		Keller-Segel chemotaxis model, Journal of Scientific Computing, 73: 943–967, 2017.
		
		\bibitem{2018liwj}
		W.~Li, W.~Chen, C.~Wang, Y.~Yan, and R.~He, A second order energy stable
		linear scheme for a thin film model without slope selection, Journal of Scientific Computing, 76: 1905–1937, 2018.
		
		\bibitem{2021wangcheng}
		C.~Liu, C.~Wang, S.~M. Wise, X.~Yue, and S.~Zhou, A positivity-preserving, energy stable and convergent numerical scheme for the Poisson-Nernst-Planck system, Mathematics of Computation, 90(331): 2071-2106, 2021.
		
		\bibitem{2021liu}
		C.~Liu, C.~Wang and Y.~Wang, 
		A structure-preserving, operator splitting scheme for reaction-diffusion equations with detailed balance, 
		Journal of Computational Physics, 436: 110253, 2021. 
		
		\bibitem{2016liu}
		J.~G.~Liu, L.~Wang and Z.~Zhou, Positivity-preserving and asymptotic
		preserving method for 2D Keller-Segal equations, Mathematics of
		Computation, 87(311): 1165--1189, 2018.
		
		\bibitem{2000Behavior}
		T.~Nagai, Behavior of solutions to a parabolic-elliptic system modelling
		chemotaxis, Journal of the Korean Mathematical Society, 37(5): 721--732, 2000.
		
		\bibitem{2001Concentration}
		T.~Nagai, T.~Senba, and T.~Suzuki, Concentration behavior of blow-up
		solutions for a simplified system of chemotaxis (variational problems and
		related topics), Rims Kokyuroku, 1181:
		140--176, 2001.
		
		\bibitem{2001Blowup}
		T.~Nagai, Blowup of nonradial solutions to parabolic–elliptic systems
		modeling chemotaxis in two-dimensional domains, Journal of
		Inequalities and Applications, 6: 37-55, 2001.
		
		\bibitem{2001Global}
		T.~Nagai, Global existence and blowup of solutions to a chemotaxis system,
		Nonlinear Analysis: Theory, Methods and Applications, 47(2): 777--787, 2001.
		
		\bibitem{1997Application}
		T.~Nagai, T.~Senba, and K.~Yoshida, Application of the trudinger-moser
		inequality to a parabolic system of chemotaxis, Funkc Ekvacioj,
		40(3): 411--433, 1997.
		
		\bibitem{2007Lower}
		L.~E.~Payne and P.~W.~Schaefer, Lower bounds for blow-up time in parabolic
		problems under dirichlet conditions, Journal of Mathematical Analysis
		and Applications, 328: 1196--1205, 2007.
		
		\bibitem{2021qian}
		Y.~Qian, C.~Wang and S.~Zhou,     
		A positive and energy stable numerical scheme for the Poisson-Nernst-Planck-Cahn-Hilliard equations with steric interactions, 
		Journal of Computational Physics, 426: 109908, 2021. 
		
		\bibitem{2005SaitoNotes}
		N.~Saito and T.~Suzuki, Notes on finite difference schemes to a
		parabolic-elliptic system modelling chemotaxis, Applied Mathematics
		and Computation, 171(1): 72-90, 2005.
		
		\bibitem{2007NorikazuConservative}
		N.~Saito, Conservative upwind finite-element method for a simplified Keller-Segel system modelling chemotaxis, IMA Journal of Numerical Analysis, 27(2): 332–365, 2007.
		
		\bibitem{2011SaitoERRORAO}
		N.~Saito, Error analysis of a conservative finite-element approximation for the Keller-Segel system of
		chemotaxis, Communications on Pure and Applied Analysis, 11(1): 339–364, 2012.
		
		\bibitem{1990shen}
		J.~Shen, Long time stability and convergence for fully discrete nonlinear
		Galerkin methods, Applicable Analysis, 38:  201--229,
		1990.
		
		\bibitem{2020Unconditionally}
		J.~Shen and J.~Xu, Unconditionally bound preserving and energy dissipative
		schemes for a class of Keller--Segel equations, SIAM Journal on
		Numerical Analysis, 58(3): 1674--1695, 2020.
		
		\bibitem{2005Free}
		T.~Suzuki, Free Energy and Self-Interacting Particles, Progr. Nonlinear Differential Equations Appl, 1st Edition, Birkhäuser Boston, 2005.
		
		\bibitem{2006Thomee}
		V.~Thomee, Galerkin Finite Element Methods for Parabolic Problems,
		2nd~Edition, Springer-Verlag, 2006.
		
		\bibitem{wei2018}
		D.~Wei, Global well-posedness and blow-up for the 2-D Patlak-Keller-Segel equation, Journal of Functional Analysis, 274: 388--401, 2018.
		
		\bibitem{Yan2018ASE}
		Y.~Yan, W.~Chen, C.~Wang and S.~Wise, A second-order energy stable BDF
		numerical scheme for the Cahn-Hilliard equation, Communications in Computational Physics, 23(2): 572--602, 2018.
		
		\bibitem{2021yuan}
		M.~Yuan, W.~Chen, C.~Wang, S.~Wise and Z.~Zhang, 
		An energy stable finite element scheme for the three-component Cahn-Hilliard-type model for macromolecular microsphere composite hydrogels, Journal of Scientific Computing, 87:78, 2021. 
		
		\bibitem{2016SaitoFinite}
		G.~Zhou and N.~Saito, Finite volume methods for a Keller–Segel system:
		discrete energy, error estimates and numerical blow-up analysis,
		Numerische Mathematik, 135: 1-47, 2016.
		
	\end{thebibliography}
\end{document}